\newcommand{\vip}{\vskip0.15cm}
\newcommand{\indiq}{1\!\! 1}
\newcommand{\e}{{\varepsilon}}
\newcommand{\E}{{\mathbb{E}}}
\newcommand{\be}{{\bf e}}
\newcommand{\bv}{{\bf v}}
\newcommand{\bV}{{\bf V}}
\newcommand{\bw}{{\bf w}}
\newcommand{\bW}{{\bf W}}
\newcommand{\cH}{{{\mathcal H}}}
\newcommand{\cF}{{{\mathcal F}}}
\newcommand{\cW}{{{\mathcal W}}}
\newcommand{\cL}{{{\mathcal L}}}
\newcommand{\cA}{{{\mathcal A}}}
\newcommand{\cP}{{{\mathcal P}}}
\newcommand{\rr}{{\mathbb{R}}}
\newcommand{\Sp}{{\mathbb{S}}}
\newcommand{\rd}{{\mathbb{R}^3}}
\newcommand{\intot}{\int_0^t}
\newcommand{\intrd}{\int_{\rd}}
\newcommand{\intrdd}{\int_{\rd\times\rd}}
\newcommand{\tf}{{\tilde f}}
\newcommand{\tg}{{\tilde g}}
\newcommand{\tv}{{\tilde v}}
\newcommand{\tx}{{\tilde x}}
\newcommand{\tc}{{\tilde c}}
\newcommand{\tB}{{\tilde B}}
\newcommand{\sm}{{s-}}
\newcommand{\beqn}{\begin{equation}}
\newcommand{\eeqn}{\end{equation}}
\newcommand{\bear}{\begin{eqnarray}}
\newcommand{\eear}{\end{eqnarray}}
\newcommand{\bean}{\begin{eqnarray*}}
\newcommand{\eean}{\end{eqnarray*}}
\newenvironment{preuve}{\vip\noindent {\it Proof}}{\hfill$\square$\vip}
\newtheorem{theo}{\indent Theorem}[section]
\newtheorem{prop}[theo]{\indent Proposition}
\newtheorem{rem}[theo]{\indent Remark}
\newtheorem{lem}[theo]{\indent Lemma}
\newtheorem{defin}[theo]{\indent Definition}
\begin{document}

\title[Rate of convergence of the Nanbu particle system]
{Rate of convergence of the Nanbu particle system for hard potentials and Maxwell molecules}

\author{Nicolas Fournier}
\author{St\'ephane Mischler}

\address{N. Fournier: LAMA UMR 8050, Universit\'e Paris Est,
Facult\'e de Sciences et Technologies,
61, avenue du G\'en\'eral de Gaulle, 94010 Cr\'eteil Cedex, France.}

\email{nicolas.fournier@univ-paris12.fr}

\address{S. Mischler: CEREMADE UMR 7534, Universit\'e Paris-Dauphine,
4 Place du Mar\'echal de Lattre de Tassigny F-75775, Paris Cedex 16, France.}

\email{mischler@ceremade.dauphine.fr}

\subjclass[2010]{80C40, 60K35}

\keywords{Kinetic theory, Stochastic particle systems, Propagation of Chaos, Wasserstein distance.}

\thanks{The two authors were supported  by a grant of the 
{\it Agence Nationale de la Recherche} numbered ANR-08-BLAN-0220-01.
}

\begin{abstract} 
We consider the (numerically motivated) Nanbu stochastic particle system associated to the
spatially homogeneous Boltzmann equation for true
hard potentials and Maxwell molecules. We establish a rate of propagation of chaos
of the particle system to the unique solution of the Boltzmann equation. More precisely,
we estimate the expectation of the squared 
Wasserstein distance with quadratic cost between the empirical measure
of the particle system and the solution to the Boltzmann equation. The rate we obtain is almost optimal 
as a function of the number of particles but is not uniform in time.
\end{abstract}

\maketitle

\section{Introduction and main results} 
\setcounter{equation}{0}

\subsection{The Boltzmann equation}
The Boltzmann equation predicts that 
the density $f(t,v)$ of particles with velocity $v\in \rd$ at time 
$t\geq 0$ in a spatially homogeneous dilute gas solves
\begin{eqnarray} \label{be}
\partial_t f_t(v) = \frac 1 2\intrd dv_* \int_{\Sp^{2}} d\sigma B(|v-v_*|,\theta)
\big[f_t(v')f_t(v'_*) -f_t(v)f_t(v_*)\big],
\end{eqnarray}
where the pre-collisional velocities are given by
\begin{equation}\label{vprimeetc}
v'=v'(v,v_*,\sigma)=\frac{v+v_*}{2} + \frac{|v-v_*|}{2}\sigma, \quad 
v'_*=v'_*(v,v_*,\sigma)=\frac{v+v_*}{2} -\frac{|v-v_*|}{2}\sigma
\end{equation}
and $\theta=\theta(v,v_*,\sigma)$ is the {\em deviation angle} defined by 
$\cos \theta = \frac{(v-v_*)}{|v-v_*|} \cdot \sigma$.
The {\em collision kernel} $B(|v-v_*|,\theta)\geq 0$
depends on the nature of the interactions between particles. See Cercignani \cite{C1988},
Desvillettes \cite{D2001}, Villani \cite{Vboltz2002} and Alexandre \cite{A2009} for physical and mathematical
reviews on this equation.
Conservation of mass, 
momentum and kinetic energy
hold at least formally for solutions to \eqref{be}
and we classically may assume without loss of generality that 
$\int_{\rd} f_0(v) dv=1$.

\vip

We will assume that the collision kernel is of the form
\begin{equation}\label{cs}
B(|v-v_*|,\theta)\, \sin \theta =\Phi(|v-v_*|) \, \beta(\theta) \quad \hbox{with}\quad
\beta>0 \; \hbox{ on } \; (0,\pi/2) \quad \hbox{and}\quad \beta=0 \; \hbox{ on } \;[\pi/2,\pi].
\end{equation}
This last condition $\beta=0$ on $(\pi/2,\pi]$ is not a restriction, since one can always reduce to this case
for symmetry reasons, as noted in the introduction of Alexandre {\it et al.} \cite{ADVW2000}.

\vip

When particles behave like hard spheres, it holds that $\Phi(z)=z$ and $\beta\equiv 1$.
When particles interact through a repulsive force  in $1/r^s$, 
with $s\in (2,\infty)$, one has
\begin{equation*}
\Phi(z)=z^\gamma  \;\;
\mbox{with} \;\; \gamma=\frac{s-5}{s-1}\in (-3,1)  \;\; \hbox{and} \;\;
\beta(\theta) \stackrel{0}{\sim} \mbox{cst} \, \theta^{-1-\nu} \;\;
\mbox{with} \;\;
\nu=\frac{2}{s-1}\in(0,2).
\end{equation*}
One classically names {\em hard potentials} the case when 
$\gamma\in (0,1)$  ({\em i.e.}, $s>5$ and $\nu \in (0,1/2)$), 
{\em Maxwell molecules} the case when $\gamma=0$  ({\em i.e.}, $s=5$ and $\nu =1/2$) and
{\em soft potentials} the case when $\gamma\in (-3,0)$  ({\em i.e.}, $s \in (2,5)$ and $\nu \in (1/2,2)$).
The present paper concerns Maxwell molecules, hard potentials as well as hard spheres, so that we always assume 
$\gamma \in [0,1]$.

\subsection{Stochastic particle systems}

As a step to the rigorous derivation of the Boltzmann equation, Kac \cite{Kac1956} proposed to show the 
convergence of a stochastic particle system to the solution to \eqref{be}. 
Kac's particle system is a $(\rd)^N$-valued Markov process
with infinitesimal generator $\tilde\cL_N$ defined, for
$\phi:(\rd)^N\mapsto \rr$ sufficiently regular and $\bv=(v_1,\dots,v_N) \in (\rd)^N$, by
\begin{equation*}
\tilde \cL_N \phi(\bv)= \frac 1 {2(N-1)} \sum_{i \ne j} \int_{\Sp^2} 
[\phi(\bv + (v'(v_i,v_j,\sigma)-v_i)\be_i+(v'_*(v_i,v_j,\sigma)-v_j)\be_j) - \phi(\bv)] 
B(|v_i-v_j|,\theta)d\sigma.
\end{equation*}
For $h\in\rr^3$, we note $h\be_i=(0,\dots,0,h,0,\dots,0)\in(\rd)^N$ with $h$ at the $i$-th place.
Roughly speaking, the system is constituted of $N$ particles entirely characterized by their 
velocities $(v_1,\dots,v_N)$ and
each couple of particles with velocities 
$(v_i,v_j)$ are modified, for each $\sigma\in\Sp^2$, at rate $B(|v_i-v_j|,\theta)/(2(N-1))$
and are then replaced by particles with velocities $v'(v_i,v_j,\sigma)$ and $v'_*(v_i,v_j,\sigma)$.

\vip

In the present paper, we will consider a slightly modified and non-symmetric particle system introduced by 
Nanbu \cite{N:83}. 
The Nanbu stochastic particle system corresponds to the generator 
$\cL_N$ defined, for $\phi:(\rd)^N\mapsto \rr$ sufficiently regular and
$\bv=(v_1,\dots,v_N) \in (\rd)^N$, by
\begin{equation}\label{gi}
\cL_N \phi(\bv)= \frac 1 {N} \sum_{i \ne j} \int_{\Sp^2} [\phi(\bv + (v'(v_i,v_j,\sigma)-v_i)\be_i) - \phi(\bv)] 
B(|v_i-v_j|,\theta)d\sigma.
\end{equation}
This system still describes $N$ particles characterized by their velocities $(v_1,\dots,v_N)$, but now
each couple of particles with velocities $(v_i,v_j)$ are modified, for each $\sigma\in\Sp^2$, at rate 
$B(|v_i-v_j|,\theta)/N$
and are then replaced by particles with velocities $v'(v_i,v_j,\sigma)$ and $v_j$.
Thus only one particle is modified at each ``collision'', but the rate of collision is multiplied by $2$.
All in all, the asymptotic behavior, as $N\to\infty$, should be the same.
  
\subsection{Aims} Our aim is to prove that as $N$ tends to $\infty$, the Nanbu stochastic system is 
asymptotically 
constituted of independent particles with identical law governed by the Boltzmann equation, and better, 
to quantify this convergence. 

\vip

There are two main motivations for such a study. (i) From a physical point of view, we want to know how well
the Boltzmann equation approximates true particles. Of course, true particles are subjected to classical
(non random) dynamics, so that studying the Kac (or Nanbu) particle system does not provide any rigorous
information on how well the Boltzmann equation approximates true particles. However, 
as already mentioned, Kac proposed this problem as an intermediate step.
(ii) From a numerical point of view, we want to know how well the particle system approximates
the Boltzmann equation. It is then important to get rates of convergence, 
to know how to choose the number of particles (and the cutoff parameter) to
reach a given accuracy.

%

\vip

The main difficulty lies 
in the fact that even if the particle system is initially constituted of independent particles, they do 
not remain independent for later times, because of interactions. 
Hence to answer the convergence issue, we have to prove that particles asymptotically
become independent and in the same time to identify their common law: 
we have to prove that the system is chaotic in the 
sense of Kac \cite{Kac1956}. 

\vip
We are able to prove and quantify the chaotic property for Nanbu's particle
system. 
Unfortunately, our study does really not seem to work for Kac's particle system. 
From the physical point of view, Nanbu's system
is less pertinent. However, we believe that the behaviors of the two systems are very similar, so that our 
results
should also hold true for Kac's particle system.
From the numerical point of view, both systems are expected to approximate the solution to
\eqref{be} with an error of the same order, so that the system under study is as interesting as 
Kac's system.

\vip
We will also study a cutoff version of Nanbu's system, where we remove collisions generating small deviations.
For technical reasons, we will not use the standard cutoff procedure where $B(z,\theta)$ is replaced by
$B_K(z,\theta)=B(z,\theta)\indiq_{\{\theta >1/K\}}$ for some large $K>0$. We will rather use some cutoff of the form
$B_K(z,\theta)=B(z,\theta)\indiq_{\{\theta >\varphi(K,z)\}}$, where the positive function $\varphi$ is chosen in 
such a way that $\int_{\Sp^2} B_K(z,\theta)d\sigma$ does not depend on $z$. This will simplify the argument
at several places.
This cutoff procedure is motivated by two reasons. 
From a  numerical point of view, the particle system with generator $\cL_N$ 
cannot be directly simulated, because each particle
collides with infinitely many others on each time interval (except for hard spheres). 
Thus we have to introduce a cutoff.
From a technical point of view, we are not able to prove directly our estimates for the particle system
without cutoff: we have to study first the particle system with cutoff and then to pass to the limit.

\subsection{Assumptions}
We assume that the collision kernel is of the form \eqref{cs} with
\begin{equation}\label{c1}
\exists \; \gamma \in [0,1], \; \forall z\geq 0, \; \Phi(z)=z^\gamma,
\end{equation}
and either 
\begin{equation}\label{c2hs}
\forall \theta\in(0,\pi/2),  \; \beta(\theta)=1
\end{equation}
or
\begin{equation}\label{c2}
\exists \; \nu\in (0,1), \; \exists \; 0<c_0<c_1,\; \forall \theta\in(0,\pi/2),  \;
c_0 \theta^{-1-\nu} \leq \beta(\theta) \leq c_1 \theta^{-1-\nu} .
\end{equation}
This work could probably be extended to $\nu\in(0,2)$, since the {\it important} computations 
on which it relies also hold in this case. However, this would introduce several technical
difficulties. Since Maxwell molecules and hard potentials, which we study, satisfy \eqref{c2} with
$\nu\in(0,1)$, we decided to avoid these technical complications.

\vip

The propagation of exponential moments requires the following additional condition
\begin{equation}\label{c4}
\beta(\theta)=b(\cos\theta)\quad \hbox{with $b$ non-decreasing, convex and $C^1$ on $[0,1)$.}
\end{equation}

\vip

In practice, all these assumptions are satisfied for Maxwell molecules ($\gamma=0$ and $\nu=1/2$), 
hard potentials ($\gamma\in(0,1)$ and $\nu \in(1,1/2)$) and hard spheres ($\gamma=1$ and $\beta\equiv 1$).

\subsection{Notation}

For $\theta \in (0,\pi/2)$ and $z\in [0,\infty)$ we introduce
\begin{equation}\label{defH}
H(\theta)=\int_\theta^{\pi/2} \beta(x)dx \;\; \hbox{and} \;\; G(z)=H^{-1}(z).
\end{equation}
Under \eqref{c2}, $H$ is a continuous decreasing bijection from $(0,\pi/2)$
into $(0,\infty)$, and its inverse function $G:(0,\infty)\mapsto (0,\pi/2]$ 
is defined by $G(H(\theta))=\theta$, and $H(G(z))=z$. It is immediately checked that under \eqref{c2},
there are some constants $0<c_2<c_3$ such that 
\begin{align}\label{eG}
\forall \; z >0, \quad c_2(1+z)^{-1/\nu} \leq G(z) \leq c_3 (1+z)^{-1/\nu}
\end{align}
and, as checked in \cite[Lemma 1.1]{FGu2008}, there is a constant $c_4>0$ such that
for all $x, y \in \rr_+$,
\begin{equation}\label{c3}
\int_0^\infty \left(G(z/x)-G(z/y) \right)^2 dz  
\leq c_4 \frac{(x-y)^2}{x+y}.
\end{equation}
Under \eqref{c2hs}, we have $G(z)=(\pi/2-z)_+$ (with the common notation $x_+=\max\{x,0\}$) and a
direct computation shows that \eqref{c3} also holds true.

\subsection{Well-posedness}

Let $\cP_k(\rd)$ be the set of all probability measures $f$ on $\rd$ such that
$\intrd |v|^kf(dv)<\infty$.
We first recall known well-posedness results for the Boltzmann equation, as well as some properties
of solutions we will need. A precise definition of weak solutions is stated in the next section.

\begin{theo}\label{wp}
Assume \eqref{cs}, \eqref{c1} and \eqref{c2hs} or \eqref{c2}. Let $f_0\in \cP_2(\rd)$.

(i) If $\gamma=0$, there exists a unique weak solution $(f_t)_{t\geq 0}\in C([0,\infty),\cP_2(\rd))$ 
to \eqref{be}. If $f_0\in \cP_p(\rd)$ for some $p\geq 2$, then $\sup_{[0,\infty)}
\intrd |v|^p f_t(dv) <\infty$. If $\intrd f_0(v)\log f_0(v) dv <\infty$ or if 
$f_0\in \cP_4(\rd)$ and is not a Dirac mass, then $f_t$ has a density for all $t>0$. 

(ii) If $\gamma\in (0,1]$, assume additionally \eqref{c4} and that
\begin{equation}\label{c5}
\exists \; p\in(\gamma,2),\;\;
\intrd e^{|v|^p}f_0(dv)<\infty.
\end{equation}
There is a unique weak solution $(f_t)_{t\geq 0}\in C([0,\infty),\cP_2(\rd))$ to \eqref{be} such that
\begin{equation}\label{momex}
\forall \;q\in(0,p), \quad \sup_{[0,\infty)}\intrd e^{|v|^q}f_t(dv)<\infty.
\end{equation}
Under \eqref{c2} and if $f_0$ is not a Dirac mass, then $f_t$ has a density for all $t>0$.
Under \eqref{c2hs} and if $f_0$ has a density, then $f_t$ has a density for all $t>0$.
\end{theo}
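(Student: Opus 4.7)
The plan is to break the statement into four essentially independent pieces---\emph{existence}, \emph{moment and exponential-moment propagation}, \emph{uniqueness}, and \emph{regularization into a density}---and to treat each with the standard toolbox. Most of the pieces already appear, in one form or another, in the literature, and the theorem is really a convenient repackaging of what will be needed for the particle-system analysis below. All four should first be proved on a cutoff version of the equation (replace $\beta$ by $\beta_n:=\beta\wedge n$), and then extended to the non-cutoff equation by a tightness argument in $C([0,T],\cP_2(\rd))$.

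For the a priori bounds, mass, momentum and energy conservation give the $\cP_2(\rd)$-bound on the cutoff solutions. A Povzner-type inequality, bounding $|v'|^p+|v'_*|^p-|v|^p-|v_*|^p$ by a gain term with a strict loss proportional to $|v-v_*|^{\gamma+p}$, closes a differential inequality for $\intrd |v|^p f_t(dv)$, giving propagation (case (i)) and even appearance (case (ii)) of polynomial moments. For the exponential moment \eqref{momex} in case (ii), I would use Bobylev's symmetrization: the convexity of $b$ imposed by \eqref{c4} provides the sign control in the decomposition $e^{|v'|^q}+e^{|v'_*|^q}\le e^{|v|^q}+e^{|v_*|^q}+R$, and the remainder $R$ is absorbed by moments $M_{q'}$ with $q'<q$, yielding, along the lines of Mouhot and Gamba-Panferov-Villani, a uniform-in-time bound on $M_q(t):=\intrd e^{|v|^q}f_t(dv)$.

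For \emph{uniqueness} I would use a quadratic Wasserstein coupling in the spirit of Tanaka: given two solutions $f,\tf$, build an optimal coupling at each time and control $W_2^2(f_t,\tf_t)$ by a Gronwall argument. In case (i) ($\gamma=0$) the collision rate is velocity-independent, so the estimate closes immediately. In case (ii) the weight $|v-v_*|^\gamma$ is unbounded and the angular singularity must be treated, and I would follow the Fournier-Mouhot strategy: the non-cutoff part is handled by the sharp cancellation inequality \eqref{c3} on $G$, and the exponential-moment bound \eqref{momex} is precisely what makes the velocity-tail contribution vanish as the truncation parameter is sent to infinity.

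For \emph{absolute continuity}, when $f_0$ has finite entropy one uses the formal $H$-theorem (justified on the cutoff and passed to the limit) to keep $f_t\log f_t$ integrable for all $t$. When $f_0\in\cP_4(\rd)$ is non-Dirac in case (i), or more generally under \eqref{c2} when $f_0$ is non-Dirac in case (ii), the non-cutoff angular singularity regularizes, and a density appears following Desvillettes, Alexandre-Desvillettes-Villani-Wennberg, or the stochastic-representation approach of Fournier-M\'el\'eard. Under the hard-sphere assumption \eqref{c2hs} the density of $f_0$ is simply propagated by a Duhamel formula, writing the loss term as a multiplicative survival weight and expanding the gain term. The main obstacle is the uniqueness in case (ii): the simultaneous presence of the unbounded weight $|v-v_*|^\gamma$ and the angular singularity forces \eqref{c3} and \eqref{momex} to interact very tightly in order to close the Wasserstein estimate, which is exactly where all of the standing assumptions line up.
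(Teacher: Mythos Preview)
Your proposal is a reasonable high-level sketch of the standard arguments, but you should be aware that the paper does not actually prove Theorem~\ref{wp}: it is stated as a compilation of known results and handled entirely by citation. Immediately after the statement, the authors point to Toscani--Villani for Maxwell molecules, Fournier--Mouhot and Desvillettes--Mouhot for hard potentials, Arkeryd, Mischler--Wennberg, Lu, Escobedo--Mischler and Lu--Mouhot for hard spheres; to Villani's review for moment propagation in the Maxwell case; to Bobylev, Fournier--Mouhot and Lu--Mouhot for exponential moments; and to Fournier, Mischler--Wennberg and Arkeryd for the density claims. No argument is given in the paper itself.

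Your outline is broadly faithful to what those references contain: Povzner estimates for polynomial moments, Bobylev-type summation for exponential moments (where \eqref{c4} enters), a $\cW_2$ coupling \`a la Tanaka combined with \eqref{c3} and \eqref{momex} for uniqueness in the hard-potential case, entropy monotonicity for density propagation under finite initial entropy, and Duhamel for hard spheres. One small correction: for the density under \eqref{c2} with non-Dirac $f_0$, the paper specifically invokes \cite{F2012} (a stochastic/entropy argument for measure initial data), not the Desvillettes or ADVW regularization results, and in the Maxwell case the $\cP_4$ assumption is used there rather than being a generic smoothing hypothesis. But since the theorem is treated as background, your sketch already goes beyond what the paper provides.
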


Concerning well-posedness, see Toscani-Villani \cite{TV1999} 
for Maxwell molecules, \cite{FMo2009,DM2009} for hard potentials and 
\cite{arkeryd72-2,MW1999,Lu1999,EM2010,LuM2012} 
for hard spheres.
The propagation of moments in the Maxwell case in standard, see e.g. Villani 
\cite[Theorem 1 p 74]{Vboltz2002}. The propagation of exponential moments for hard potentials and hard 
spheres, initiated by Bobylev \cite{boby97}, is checked in \cite{FMo2009,LuM2012}.
Finally, the existence of a density for $f_t$ has been proved in
\cite{F2012} (under \eqref{c2} and when $f_0$ is not a Dirac mass and belongs to $\cP_4(\rd)$),
in \cite{MW1999} (under \eqref{c2hs} when $f_0$ has a density) and is very classical
by monotonicity of the entropy when $f_0$ has a finite entropy, see e.g. Arkeryd \cite{arkeryd72-1}.

\vip

We now introduce our particle system with cutoff.

\begin{prop}\label{wps}
Assume \eqref{cs}, \eqref{c1} and \eqref{c2hs} or \eqref{c2}. Let $f_0 \in \cP_2(\rd)$ and
a number of particles $N\geq 1$ be fixed.
Let $(V^i_0)_{i=1,\dots N}$ be i.i.d. with common law $f_0$.

(i) For each cutoff parameter $K\in [1,\infty)$,
there exists a unique (in law) Markov process $(V^{i,N,K}_t)_{i=1,\dots,N,t\geq 0}$ with values
in $(\rd)^N$, starting from $(V^i_0)_{i=1,\dots N}$ and with generator $\cL_{N,K}$ defined,
for all bounded measurable $\phi:(\rd)^N\mapsto\rr$ and any $\bv=(v_1,\dots,v_N)\in\rd$,
by
\begin{align*}
\cL_{N,K} \phi(\bv)=\frac 1 {N} \sum_{i \ne j} \int_{\Sp^2} [\phi(\bv + (v'(v_i,v_j,\sigma)-v_i)\be_i) - \phi(\bv)] 
B(|v_i-v_j|,\theta)\indiq_{\{\theta \geq G(K/|v_i-v_j|^\gamma)\}}d\sigma,
\end{align*}
with $G$ defined by \eqref{defH} and, 
for $h\in\rr^3$, $h\be_i=(0,\dots,h,\dots,0)\in(\rd)^N$ with $h$ at the $i$-th place.

(ii) There exists a unique (in law) Markov process $(V^{i,N,\infty}_t)_{i=1,\dots,N,t\geq 0}$ with values
in $(\rd)^N$, starting from $(V^i_0)_{i=1,\dots N}$ and with generator $\cL_{N}$ defined, for all
Lipschitz bounded function $\phi:(\rd)^N\mapsto\rr$ and any $\bv=(v_1,\dots,v_N)\in\rd$, by \eqref{gi}.
\end{prop}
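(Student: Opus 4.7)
The plan is to treat part (i) by standard pure-jump Markov process theory and to obtain part (ii) by passing to the limit $K\to\infty$ in the cutoff construction. For (i), the crucial observation is that $\cL_{N,K}$ has a uniformly bounded total jump rate. Using the definition \eqref{defH} of $G=H^{-1}$ and the relation $H(G(z))=z$, together with the change of variables $z=H(\theta)$ in the $\theta$ integral, one gets
\[
\int_{\Sp^2} B(|v_i-v_j|,\theta)\indiq_{\{\theta \ge G(K/|v_i-v_j|^\gamma)\}}\,d\sigma
= 2\pi |v_i-v_j|^\gamma H\bigl(G(K/|v_i-v_j|^\gamma)\bigr) = 2\pi K,
\]
so that the total rate of $\cL_{N,K}$ is exactly $2\pi K(N-1)$, independently of $\bv$. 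Classical theory of pure-jump Markov processes with uniformly bounded generator (Ethier--Kurtz) then yields existence and uniqueness in law of the process, whose pathwise construction amounts to thinning a dominating Poisson process of rate $2\pi K(N-1)$ and sampling the colliding ordered pair, deviation angle and azimuth from the appropriate conditional distribution.

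For part (ii), the plan is to work on a fixed probability space carrying the i.i.d.\ initial data $(V^i_0)_{i\le N}$ and mutually independent Poisson random measures $M^{ij}(ds,d\alpha,d\varphi)$ (for $i\neq j$) on $[0,\infty)^2\times[0,2\pi)$ with intensity $N^{-1}\,ds\,d\alpha\,d\varphi$, and to realise each cutoff process simultaneously for all $K\in[1,\infty]$ as the pathwise solution of the Poisson-driven SDE
\[
V^{i,N,K}_t = V^i_0 + \sum_{j\neq i}\int_0^t\!\int_0^K\!\int_0^{2\pi} c\bigl(V^{i,N,K}_\sm,V^{j,N,K}_\sm,\alpha,\varphi\bigr)\, M^{ij}(ds,d\alpha,d\varphi),
\]
where $c(v,v_*,\alpha,\varphi):=v'(v,v_*,\sigma)-v$ with $\sigma$ the unit vector of polar angle $G(\alpha/|v-v_*|^\gamma)$ and azimuth $\varphi$ around $(v-v_*)/|v-v_*|$. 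The core estimate is the pointwise bound $|c(v,v_*,\alpha,\varphi)| \le \tfrac12 |v-v_*|\,G(\alpha/|v-v_*|^\gamma)$, which combined with the $L^2$-integrability $\int_0^\infty G(u)^2\,du < \infty$ (immediate from \eqref{eG} since $\nu<1$, and trivial under \eqref{c2hs}) and uniform-in-$K$ second moment bounds on $V^{i,N,K}$ (obtained from an explicit computation of $\cL_{N,K}|v_i|^2$ together with a Gronwall inequality) should yield, via the Burkholder--Davis--Gundy and Doob inequalities,
\[
\E\sup_{t\le T}\bigl|V^{i,N,K}_t - V^{i,N,K'}_t\bigr|^2 \longrightarrow 0 \quad\hbox{as}\quad K,K'\to\infty.
\]
The limit $V^{i,N,\infty}$ is then a pathwise solution of the SDE without cutoff and is readily checked to be a Markov process with generator $\cL_N$ on Lipschitz bounded functions.

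Uniqueness in law will follow from pathwise uniqueness for the SDE: applying exactly the same $L^2$-estimate to the difference of two solutions driven by the same Poisson measures and using Gronwall yields coincidence, whence uniqueness in law by the Yamada--Watanabe principle. The main obstacle is the uniform-in-$K$ moment propagation: unlike the symmetric Kac dynamics, Nanbu collisions do not conserve energy pathwise, so one has to estimate $\cL_{N,K}|v_i|^2$ directly and absorb the resulting $|v_i-v_j|^{1+\gamma}$ contribution, which for hard potentials ($\gamma\in(0,1]$) calls for careful control of cross-moments and a Gronwall argument whose thresholds must be independent of $K$.
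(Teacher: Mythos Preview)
Your treatment of part (i) is correct and essentially matches the paper: the total jump rate is $2\pi K(N-1)$ uniformly in $\bv$, so standard finite-jump-rate theory applies. The paper phrases this via a Poisson-driven SDE (their Proposition \ref{wpst}) whose jumps are a.s.\ discrete, but the content is the same.

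For part (ii) there are two genuine issues.

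First, your ``main obstacle'' is not one. You worry about propagating second moments uniformly in $K$ via Gronwall, but in fact $\E[|V^{1,N,K}_t|^2]=\int|v|^2 f_0(dv)$ exactly, for all $t$ and $K$. The paper obtains this in Proposition \ref{wpst}: the $(z,\varphi)$-integral of $|c|^2+2v\cdot c$ equals $[|v-v_*|^2-2v\cdot(v-v_*)]\Phi_K(|v-v_*|)$, and after taking expectations and using exchangeability (symmetrise $1\leftrightarrow 2$) the bracket vanishes identically. No Gronwall, no $|v-v_*|^{1+\gamma}$ to absorb.

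Second, your Cauchy/pathwise-uniqueness scheme has a real gap. The map $(v,v_*)\mapsto c(v,v_*,\alpha,\varphi)$ is \emph{not} Lipschitz, because $\Gamma(X,\varphi)$ is discontinuous in $X$; this is exactly Tanaka's obstruction, and is why the paper needs the $\varphi$-shift of Lemma \ref{tanana} and the full computation of Lemma \ref{fundest} to compare two trajectories. Even after that trick, the resulting bound (Lemma \ref{furthernul}) carries a factor $(1+|v|+|v_*|+|\tv|+|\tv_*|)^\kappa$, so a naked Gronwall would require moments of order $>2$ uniformly in $K$, which you do not have (the paper's $p$-th moment bound in Proposition \ref{wpst} is $e^{C_pKt}$). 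The paper closes this by localising with the stopping time $\tau_{N,K,A}=\inf\{t:\max_i(|V^{i,N,\infty}_t|+|V^{i,N,K}_t|)\ge A\}$: on $[0,\tau_{N,K,A}]$ the velocities are bounded by $A$, Gronwall gives convergence with an $A$-dependent constant, and then $A\to\infty$ using $\sup_K\Pr[\tau_{N,K,A}\le T]\to 0$ (from the uniform $L^2$ bound). Your proposal omits both Tanaka's trick and the localisation; without them the $L^2$-difference estimate does not close. The paper also handles existence by tightness and martingale problems rather than by a Cauchy-sequence argument, though your route would work once the localisation is in place.
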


Let us emphasize that the cut-off used for defining the generator $\cL_{N,K}$ is not the usual one since 
it depends
not only on the deviation angle $\theta \in (0,2\pi)$ but also of the relative velocity $|v-v_*|$. It 
is more convenient 
in order to perform the computations we want to do. It might also be convenient for practical simulations.
Indeed, the total rate of collision of the particle system does not depend on the configuration 
of the velocities: it always equals $2\pi (N-1) K $.
Hence, the (mean) simulation cost of the particle system on a time interval $[0,T]$ is proportional to
$(N-1) KT $.

\subsection{Wasserstein distance}

For  $g,\tg \in \cP_2(\rd)$, let $\cH(g,\tg)$ be the set of probability 
measures on $\rd\times\rd$ with first marginal $g$ and second marginal $\tg$.
We then set 
\begin{eqnarray*}
\cW_2(g,\tg)&=&
\inf \left\{\left(\intrdd |v-\tv|^2 \, \eta(dv,d\tv)\right)^{1/2};\quad \eta\in \cH(g,\tg)\right\}.
\end{eqnarray*}
This is the Wasserstein distance with quadratic cost. It is well-known that the $\inf$ is reached.
We refer to Villani \cite[Chapter 2]{Vtot2003} 
for more details on this distance.
A remarkable result,
due to Tanaka \cite{T1978,T1979}, is that in the case of Maxwell molecules,
$t\mapsto \cW_2(f_t,\tf_t)$ is non-increasing 
for each pair of reasonable solutions $f,\tf$ to the Boltzmann equation.
The present work is strongly inspired by the ideas of Tanaka.

\subsection{Empirical law of large numbers}

For $f\in \cP_2(\rd)$ and $N\geq 1$, we define
\begin{equation}\label{bestrate}
\e_N(f):= \E \left[\cW_2^2\left(f,N^{-1}\sum_1^N \delta_{X_i} \right)\right] \hbox{ with $X_1,\dots,X_N$ 
independent and $f$-distributed.}
\end{equation}

Since a $f$-chaotic stochastic particle system is asymptotically constituted of 
i.i.d. $f$-distributed particles, $\e_N(f)$ is the best rate (as far as $\cW_2^2$ is concerned) 
we can hope for such a system. We recall now the estimate proved in \cite[Theorem 1]{fg} (with $d=3$ and $p=2$) 
and we also refer to Rachev-Ruschendorf \cite[Theorem10.2.1]{RR},   \cite[Lemma 4.2]{MMKac}, Boissard-Le Gouic
\cite{BoissardLeGouic} and  Dereich-Scheutzow-Schottstedt \cite{dss} for earlier (but not optimal) versions.

\begin{theo}\label{theo:W2indep}
For all $A>0$, all $k > 2$, all $f\in\cP_k(\rd)$ verifying
$\intrd |v|^k f(dv) \leq A$, all $N\geq 1$,
\beqn\label{eq:W2indep}
\e_N(f)\leq \left\{ 
\begin{array}{ll}
C_{A,k} N^{-(k-2)/k} & \hbox{ if $k \in (2,4)$,}\\
C_{A,k} N^{-1/2} & \hbox{  if $k>4$}.
\end{array}\right.
\eeqn
\end{theo}

This bound is optimal for {\it general} laws. The convergence might be faster for some
regular laws, but this should be quite complicated, see \cite[Subsection 1.2]{fg} as well as the 
discussion in Barthe-Bordenave \cite{BartheBordenave}. We also refer to 
\cite[Theorem~2.13]{HaurayMischler} (and the remarks which follow) for a general discussion 
about the rate of chaoticity 
for independent and dependent random arrays.


\subsection{Main result}

Our study concerns both the particle systems with and without cutoff. 
It is worth to notice that for true Maxwell molecules and hard potentials,
$\nu \in (0,1/2]$ so that $1-2/\nu \le -3$ and the contribution of the cut-off approximation vanishes
rapidly in the limit $K\to\infty$. 

\begin{theo}\label{mr} 
Let $B$ be a collision kernel satisfying \eqref{cs}, \eqref{c1} and \eqref{c2hs} or \eqref{c2} and let 
$f_0\in\cP_2(\rd)$ not be a Dirac mass.
If $\gamma>0$, assume additionally \eqref{c4} and \eqref{c5}.
Consider the unique weak solution $(f_t)_{t\geq 0}$ to \eqref{be} defined in Theorem \ref{wp}
and, for each $N\geq 1$, $K\in [1,\infty]$, 
the unique Markov process $(V^{i,N,K}_t)_{i=1,\dots,N,t\geq 0}$  defined in Proposition \ref{wps}.
Let $\mu^{N,K}_t:= N^{-1}\sum_1^N \delta_{V^{i,N,K}_t}$.

(i) Maxwell molecules. Assume that $\gamma=0$, \eqref{c2} 
and either $\intrd f_0(v)\log f_0(v) dv<\infty$ or $f_0\in\cP_4(\rd)$.
There is a constant $C$ such that for all $T\geq 0$,
all $N \geq 1$, all $K\in[1,\infty]$,
\begin{align}\label{fc1}
\sup_{[0,T]}\E[\cW_2^2(\mu^{N,K}_t,f_t)] 
\leq& C (1+T)^2 \sup_{[0,T]} \e_N(f_t)+  C T K^{1-2/\nu}.
\end{align}
If $f_0\in\cP_k(\rd)$ for some $k>2$, we have $\sup_{[0,\infty)} \intrd |v|^k f_t(dv)<\infty$
and we can use Theorem \ref{theo:W2indep} to bound $\sup_{[0,T]} \e_N(f_t)$.
In particular if $k>4$, then for all $T\geq 0$,
all $N \geq 1$, all $K\in[1,\infty]$,
\begin{align}\label{fc2}
\sup_{[0,T]}\E[\cW_2^2(\mu^{N,K}_t,f_t)] \leq& C (1+T)^2 N^{-1/2}+  C T K^{1-2/\nu}.
\end{align}

(ii) Hard potentials. Assume that $\gamma\in(0,1)$ and \eqref{c2}. For all $\e\in(0,1)$, all $T\geq0$, 
there is a constant $C_{\e,T}$ such that for all $N\geq 1$, all $K\in[1,\infty]$,
\begin{align}\label{fc3}
\sup_{[0,T]}\E[\cW_2^2(\mu^{N,K}_t,f_t)] \leq& C_{\e,T} \left(\sup_{[0,T]}\e_N(f_t)
+ K^{1-2/\nu} \right)^{1-\e}.
\end{align}
Consequently,  for all $\e\in(0,1)$, all $T\geq0$, there is 
$C_{\e,T}$ such that for all $N\geq 1$, all $K\in[1,\infty]$,
\begin{align}\label{fc4}
\sup_{[0,T]}\E[\cW_2^2(\mu^{N,K}_t,f_t)] \leq& C_{\e,T} (N^{-1/2} + K^{1-2/\nu})^{1-\e}. 
\end{align}

(iii) Hard spheres. Assume finally that $\gamma=1$,  \eqref{c2hs} and that $f_0$ has a density.
For all $\e\in(0,1)$, all $T\geq0$, all $q\in (1,p)$, 
there is a constant $C_{\e,q,T}$ such that for all $N\geq 1$, all $K\in[1,\infty)$,
\begin{align}\label{fc5}
\sup_{[0,T]}\E[\cW_2^2(\mu^{N,K}_t,f_t)] \leq& C_{\e,q,T} \left(\left(\sup_{[0,T]}\e_N(f_t)\right)^{1-\e}
+ e^{-K^q}\right)e^{C_{\e,q,T} K} .
\end{align}
Thus for all $\e\in(0,1)$, all $T\geq0$, all $q\in(1,p)$, there is 
$C_{\e,q,T}$ such that for all $N\geq 1$, all $K\in[1,\infty)$,
\begin{align}\label{fc6}
\sup_{[0,T]}\E[\cW_2^2(\mu^{N,K}_t,f_t)] \leq& C_{\e,q,T} (N^{-1/2+\e} +  e^{-K^q})e^{C_{\e,q,T} K}    . 
\end{align}
\end{theo}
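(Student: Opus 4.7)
The plan is to couple the particle system $(V^{i,N,K}_t)_{i=1,\dots,N}$ with a system of $N$ independent nonlinear Boltzmann processes $(W^i_t)_{i=1,\dots,N}$ (each with law $f_t$) via common Poisson noises, following Tanaka's strategy. Concretely I would fix $N$ independent Poisson random measures $\cM^i(ds, dj, d\sigma, dz)$ on $[0,\infty)\times\{1,\dots,N\}\times \Sp^2 \times [0,\infty)$ and represent both processes as Poisson SDEs driven by these $\cM^i$: the particle $V^{i,N,K}$ jumps using its velocity and that of $V^{j,N,K}_{s-}$, while the McKean process $W^i$ jumps according to an auxiliary variable $W^{*,j}_s$ drawn, conditionally, from $f_s$. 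The key step is to couple the pre-collisional configurations optimally: letting $\eta_s$ be the optimal coupling of $\mu^{N,K}_{s-}$ and $f_s$, I would sample $(V^{j,N,K}_{s-}, W^{*,j}_s)$ from $\eta_s$ and insert a rotation $R_s = R(V^i - V^{j,N,K}_{s-}, W^i - W^{*,j}_s)$ of $\sigma$ in the jump of $W^i$ so that, after integration in $\sigma$, the squared post-collisional distance is bounded by a multiple of $|V^i - W^i|^2 + \cW_2^2(\mu^{N,K}_{s-}, f_s)$.

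Applying It\^o's formula to $|V^{i,N,K}_t - W^i_t|^2$, taking expectations and using exchangeability in $i$, I expect to obtain an inequality of the form
\[
u^{N,K}_t \le \int_0^t \E\bigl[\Phi(|V^{1,N,K}_{s} - V^{2,N,K}_{s}|) \bigl(\alpha\, u^{N,K}_s + \beta\, \e_N(f_s) + \gamma_K\bigr) \bigr]\, ds,
\]
where $u^{N,K}_t := \E[|V^{1,N,K}_t - W^1_t|^2]$. The empirical term comes from the i.i.d.\ bound $\E[\cW_2^2(N^{-1}\sum_j \delta_{W^j_s}, f_s)] \le \e_N(f_s)$ (cf.\ \eqref{bestrate}), while the cut-off error $\gamma_K$ is extracted from the missing-angle contribution $\theta < G(K/|v-v_*|^\gamma)$ using \eqref{eG}--\eqref{c3}, producing a term of order $K^{1-2/\nu}$ (times a power of $|v-v_*|$ when $\gamma>0$). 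The triangle inequality, together with the fact that $(W^i_t)$ is i.i.d.\ $f_t$-distributed, then gives $\E[\cW_2^2(\mu^{N,K}_t, f_t)] \le 2 u^{N,K}_t + 2\e_N(f_t)$, and one closes the bound by Gronwall.

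For case (i), $\Phi\equiv 1$ is a deterministic weight, so Gronwall applied to the linear loop yields the $(1+T)^2$ factor in \eqref{fc1}; the moment propagation of Theorem \ref{wp}(i) then feeds Theorem \ref{theo:W2indep} to obtain \eqref{fc2}. For case (ii), $\Phi(z)=z^\gamma$ is unbounded, so I would split according to whether $|V^1|+|V^2|$ exceeds a threshold $R$, control the large-velocity contribution by the exponential moments \eqref{momex} propagated from \eqref{c5}, and optimize $R$ as a slowly growing power of $N$ --- this interpolation is exactly what produces the $(1-\e)$ loss in \eqref{fc3}--\eqref{fc4}. For case (iii), the kernel is bounded but $\Phi(z)=z$ is unbounded; an exponential Markov-type tail bound $\Pr(|V^{i,N,K}_s|>K) \le C e^{-K^q}$ coming from \eqref{momex} controls the event $\{|V|>K\}$, and on its complement the rate is of order $K$, giving a Gronwall constant $CK$ and the factor $e^{CK}$ in \eqref{fc5}--\eqref{fc6}. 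The main obstacle will be case (ii): the velocity-weighted self-loop, together with the need to control the cut-off interface uniformly in $K$, forces a delicate truncation / interpolation that inevitably costs an arbitrarily small power; the a priori density of $f_t$ provided by Theorem \ref{wp} is what legitimizes the Tanaka-type optimal coupling along the way.
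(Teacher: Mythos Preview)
Your overall coupling strategy is the right one and matches the paper's, but there is a genuine gap in how you plan to close the Gronwall loop for $\gamma>0$.

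You write that the integrand carries a weight $\Phi(|V^{1,N,K}_s - V^{2,N,K}_s|)$ and that you will control large values of $|V^1|+|V^2|$ via the exponential moments \eqref{momex}. But \eqref{momex} is a property of the \emph{solution} $f_t$, not of the particle system: one cannot prove any moment beyond order~$2$ uniformly in $K$ for $(V^{i,N,K}_t)$. So your truncation argument in case~(ii) and your tail bound $\Pr(|V^{i,N,K}_s|>K)\le Ce^{-K^q}$ in case~(iii) are both unavailable. The paper's way around this is to arrange the pointwise collision estimate (Lemma~\ref{fundest} together with Lemmas~\ref{further}--\ref{furtherhs}) so that every power larger than~$2$ and every exponential lands on the \emph{Boltzmann-process} variables $v,v_*$ (instantiated as $W^1_s$, $W^*_s(\alpha)$, $Z^*_s(\bW_s,\alpha)$, all $f_s$-distributed and hence enjoying \eqref{momex}), while the particle-system variables $\tilde v,\tilde v_*$ never appear with exponent above~$2$. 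This asymmetry is the heart of the proof and is not visible in your sketch.

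A second missing ingredient is the exact cancellation of the ``drift'' piece $A_2^K$. After the Tanaka rotation, the squared-distance increment splits as $A_1^K+A_2^K+A_3^K$; the term $A_2^K$ is of size $(|v-\tilde v|+|v_*-\tilde v_*|)\,|v-v_*|^{1+\gamma}$ and cannot be absorbed into $|v-\tilde v|^2+|v_*-\tilde v_*|^2$ times a weight in the $v,v_*$ variables alone. The paper handles this by inserting an \emph{intermediate} layer: the optimal coupling is not taken between $\mu^{N,K}_{s-}$ and $f_s$ as you propose, but between $f_s$ and the empirical measure $\mu^N_{\bW_s}$ of the i.i.d.\ Boltzmann processes, producing a variable $Z^*_s(\bW_s,\alpha)$; one then passes to $V^*_s(\bV^{N,K}_s,\bW_s,\alpha)$ via the label-preserving map $W^j\mapsto V^{j,N,K}$. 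With this three-layer construction, replacing $W^*_s$ by $Z^*_s$ in $A_2^K$ makes the expectation vanish \emph{exactly}, thanks to the antisymmetry $A_2^K(v,v_*,\tilde v,\tilde v_*)+A_2^K(v_*,v,\tilde v_*,\tilde v)=0$ and exchangeability. The remainder $A_2^K(\cdot,W^*_s,\cdot)-A_2^K(\cdot,Z^*_s,\cdot)$ is then controlled by $|W^*_s-Z^*_s|^2$ times a polynomial in $f_s$-distributed variables only, which integrates to $(\e_N(f_s))^{1-\e}$. Without this cancellation the Gronwall loop does not close.
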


Concerning the rate of convergence of the simulation algorithm, we have the following.

\begin{rem}
Recall that the simulation cost per unit of time is proportional to $(N-1)K$.

(i) For Maxwell molecules and hard potentials the error (for $\cW_2$) 
is $(N^{-1/6}+K^{1/2-1/\nu})^{1-}$. For
a given simulation cost $\tau$, the best choices are $N\simeq \tau^{(4-2\nu)/(4-\nu)}$ and 
$K\simeq \tau^{\nu/(4-\nu)}$, 
which leads to an error in $\tau^{-(2-\nu)/(8-2\nu) +}$. For true hard potentials
and Maxwell molecules, this is at worst $\tau^{-3/14+}$ and at best $\tau^{-1/4+}$.

(ii) For hard spheres, make the choice $K\simeq (\log N)^a$ with $a \in (1/q,1)$.
Then $e^{C K} << N^{\e}$ for any $\e\in (0,1)$ and $e^{-K^q}<< N^{-r}$ for any $r>1$.
With this choice, we thus find an error in $N^{-1/4+\e}$ for a simulation cost in $N  (\log N)^a$.
Consequently, for a given simulation cost $\tau$, we find an error in $\tau^{-1/4 +}$.
\end{rem}

We excluded the case where $f_0$ is a Dirac mass because we need that $f_t$ has a density and because
if $f_0=\delta_{v_0}$, then the unique solution to \eqref{be} is given by $f_t=\delta_{v_0}$ and 
the Markov process of Proposition \ref{wps} is nothing but $V^{1,N,K}_t=(v_0,\dots v_0)$ 
(for any value of $K\in [1,\infty]$), so that $\mu^{N,K}_t=\delta_{v_0}$ and thus $\cW_2(f_t,\mu^{N,K}_t)=0$.

\subsection{Comments}
We thus show that the empirical law of the particle system converges
to $f_t$ as fast as i.i.d. $f_t$-distributed particles (up to an arbitrary small loss if $\gamma\ne 0$).
This is thus almost optimal in some sense. However, this is optimal only as far
as $\cW_2$ is concerned: we would have preferred to work with another distance and to obtain
a rate in $N^{-1/2}$ as is expected for laws of large numbers. Here we obtain a rate in $N^{-1/4}$,
since $\cW_2$ is squared. However, $\cW_2$ enjoys several properties that make it quite convenient
when studying the Boltzmann equation, mainly because of the role of the kinetic energy.
Another default of this work is that we obtain a non-uniform (in time) bound. For Maxwell
molecules, the bound is slowly increasing (as $T^2$) but for hard potentials, it is growing very fast.

\vip

Note also that for hard spheres, we are not able to treat the case where $K=\infty$:
we need to let $K$ and $N$ go to infinity simultaneously, with some constraints.
We believe that this is only a technical problem, but we were not able to solve it. However, we still obtain
a very reasonable rate of convergence (as a function of the computational cost).

\vip

Our proof is based on a coupling argument: we couple the $N$-particle system
with a family of $N$ i.i.d. Boltzmann processes,
in such a way that they remain as close as possible.  
We prove an accurate control on 
the increment of the distance between the two systems at each collision. 
This last computation is similar to those of \cite{FMo2009,FGu2008} 
concerning uniqueness of the solution to \eqref{be}.
However, we need to handle much more precise computations: in \cite{FMo2009}, when studying the distance 
between two solutions to \eqref{be}, both were supposed to have exponential moments.
Such exponential moments are known to propagate for solutions to \eqref{be} since
the seminal work of Bobylev \cite{boby97}, but for 
the particle system under study, we are not even able to prove the finiteness of a moment of order $2+\e$, $\e>0$! 
We thus need a very precise refinement of the computations of \cite{FMo2009,FGu2008}.

\vip

All these problems do not appear when studying Maxwell molecules. Roughly, the 
collision operator is globally Lipschitz continuous for Maxwell molecules and only locally
Lipschitz continuous for hard potentials (which explains why large velocities have to be controlled by
using exponential moments). This is why we obtain a better result for Maxwell molecules.

\vip

Note that for the (physically more relevant) Kac particle system
moments are known to propagate (uniformly in $N$), see Sznitman \cite{SznitmanB} and also \cite{MMKac}, 
which would simplify greatly the proof at many places. However,
we are not able to exhibit a suitable coupling. This is due to the fact that in Kac's
system, each collision modifies the velocity of two particles. 
In Nanbu's system, the Poisson measures governing two different particles are independent,
which is not the case for Kac's system (because each time a particle's velocity is modified, another one has 
to be also modified) although the larger is the number of particles, the lower the correlation is. 
As a consequence, it is more difficult
to couple the $N$-particle symmetric Kac's system with $N$ independent copies of the Boltzmann process
and we did not succeed.

\subsection{Known results}
Such a chaos result for the Boltzmann equation with bounded cross section, or for related models, has been 
first established without any rate by Kac \cite{Kac1956} (for the so-called Maxwell molecules Kac's 
model which is roughly a 
``toy one-dimensional" Boltzmann equation) and then by 
McKean   \cite{McKean1967} and Gr\"unbaum \cite{Grunbaum}. For unbounded cross section, the chaos property 
has been proved by Sznitman \cite{SznitmanB} for hard spheres, still without rate.

\vip

For Maxwell molecules with Grad's cutoff, a nice rate of convergence 
(of order $1/N$ in total variation distance on the two-marginal)
has been obtained by McKean \cite{McK4} and improved by Graham-M\'el\'eard \cite{GM}. 
This was extended by Desvillettes-Graham-M\'el\'eard \cite{DGM1999}, see also \cite{FMe2002sto},
to true (without Grad's cutoff) Maxwell molecules, 
but with a rate in $N^{-1}e^{KT}+K^{1-2/\nu}$ (with the notation of
the present paper). From a numerical point of view, this leads to a logarithmic convergence as a function
of the computational cost.

\vip

More recently, 
a uniform in time rate of chaos convergence of Kac's stochastic particle system 
to the Boltzmann equation for two unbounded models has been established in 
\cite{MMKac,KleberBsphere} (see also \cite{MMWchaos}), by taking up again and improving Gr\"unbaum's 
approach. 
For true Maxwell molecules, uniform in time rate of convergence of order $N^{-1/(6+\delta)}$, for any 
$\delta >0$,
for a weak distance on the two-marginals has been proved in \cite[Theorem 5.1]{MMKac} when the initial 
condition $f_0$
has a compact support. This result was improved and made more precise in \cite[Step 3 of the proof of 
Theorem 8]{KleberBsphere}, where, still for true Maxwell molecules, uniform in time rate of  convergence of 
order $N^{-1/177}$, for the same $\cW_2$ Wasserstein distance as used in \eqref{fc1}, 
has been proved for any initial condition $f_0$ satisfying \eqref{c5}. Hard spheres have also been
studied in \cite[Theorem 6.1]{MMKac}: a uniform
in time rate of convergence of order $1/(\log N)^\alpha$ with $\alpha > 0$ small, for the $\cW_1$ 
distance on the two-marginals has been proved. When applying the methods of 
\cite{MMKac,MMWchaos,KleberBsphere} on finite time intervals, the previous rates can not be 
really improved.
Finally, let us mention that the present work follows some of the ideas of \cite{FGo2012}, which concerns
the Kac equation.

\vip

To summarize: 

$\bullet$ We obtain the first rate of convergence for hard potentials and this rate is reasonable. 
Recall that hard potentials are twice unbounded (the velocity cross section is unbounded
and the angular cross section is non-integrable), while Maxwell molecules enjoy a bounded velocity cross
section and hard spheres an integrable angular cross section.

$\bullet$ For hard spheres
and Maxwell molecules, we prove a much faster convergence than \cite{MMKac,MMWchaos,KleberBsphere},
but we are restricted to finite time-intervals and we cannot study Kac's system.

\vip

Let us finally mention that we use a coupling method, as is widely used since the 
famous {\it cours \`a l'\'ecole d'\'et\'e de Saint-Flour} 
by Sznitman \cite{Sbook} for providing rate of chaos convergence 
for the so-called McKean-Vlasov model and that such methods 
have been recently adapted to non-globally Lipschitz
coefficients by Bolley-Ca\~nizo-Carrillo in \cite{BCC}, making use of exponential
moments.

\subsection{Plan of the paper}
In Section \ref{prel}, we make precise the notion of weak solutions, rewrite 
the collision operators in a suitable form and check a accurate version of a lemma due to Tanaka \cite{T1979}.
Section \ref{sec:MainComput} is devoted to the cornerstone estimate on the collision integral. 
In Section \ref{sec:ConvCutoff}  we prove the convergence of the particle system with cutoff. The
cutoff is removed in Section~\ref{sec:ConvGal}.

\section{Preliminaries}\label{prel}
\setcounter{equation}{0}

\subsection{Rewriting equations}

We follow here \cite{FMe2002sto}.
For each $X\in \rd$, we introduce $I(X),J(X)\in\rd$ such that
$(\frac{X}{|X|},\frac{I(X)}{|X|},\frac{J(X)}{|X|})$ 
is a direct orthonormal basis of $\rd$ and, of course, in such a way that $I,J$ are measurable functions.
For $X,v,v_*\in \rd$, for $\theta \in (0,\pi/2)$ and $\varphi\in[0,2\pi)$,
we set
\begin{equation}\label{dfvprime}
\left\{
\begin{array}{l}
\Gamma(X,\varphi):=(\cos \varphi) I(X) + (\sin \varphi)J(X), \\\\
a(v,v_*,\theta,\varphi):= - \displaystyle\frac{1-\cos\theta}{2} (v-v_*)
+ \frac{\sin\theta}{2}\Gamma(v-v_*,\varphi),\\\\
v'(v,v_*,\theta,\varphi):=v+a(v,v_*,\theta,\varphi),
\end{array}
\right.
\end{equation}
which is a suitable parametrization of \eqref{vprimeetc}:
write $\sigma\in \Sp^2$ as $\sigma=\frac{v-v_*}{|v-v_*|}\cos\theta
+ \frac{I(v-v_*)}{|v-v_*|}\sin\theta \cos\varphi+\frac{J(v-v_*)}{|v-v_*|}\sin\theta
\sin\varphi$.
Let us define, classically, weak solutions to \eqref{be}.

\begin{defin}\label{dfsol}
Assume \eqref{cs}, \eqref{c1} and \eqref{c2hs} or \eqref{c2}.
A family $(f_t)_{t\geq 0} \in C([0,\infty),\cP_2(\rd))$ is called a weak solution to \eqref{be} if
it preserves momentum and energy, i.e.
\begin{equation}\label{cons}
\forall\; t\geq 0, \quad \intrd v f_t(dv)= \intrd v f_0(dv) \quad\hbox{and}\quad
\quad \intrd |v|^2 f_t(dv)= \intrd |v|^2 f_0(dv)
\end{equation}
and if for any $\phi:\rd\mapsto \rr$  bounded and Lipschitz-continuous, any $t\in [0,T]$,
\begin{equation}\label{wbe}
\intrd \phi(v)\, f_t(dv) =  \intrd \phi(v)\, f_0(dv)
+\intot \intrd \intrd \cA\phi(v,v_*) f_s(dv_*)   f_s(dv) ds
\end{equation}
where
\begin{equation}\label{afini}
\cA\phi (v,v_*) = |v-v_*|^\gamma \, \int_0^{\pi/2}
\beta(\theta)d\theta 
\int_0^{2\pi} d\varphi
\left[\phi(v+a(v,v_*,\theta,\varphi))-\phi(v) \right].
\end{equation}
\end{defin}

Noting that $|a(v,v_*,\theta,\varphi)| \leq C  \theta |v-v_*|$ and that $\int_0^{\pi/2}
\theta \beta(\theta)d\theta$,
we easily get $|\cA\phi(v,v_*)|\leq C_\phi |v-v_*|^{1+\gamma} \leq C_\phi  (1+|v-v_*|^2)$,
so that everything makes sense in \eqref{wbe}.

\vip

We next rewrite the collision
operator in a way that makes disappear the velocity-dependence
$|v-v_*|^\gamma$ in the {\it rate}. Such a trick 
was already used in \cite{FMe2002weak} and \cite{FGu2008}.

\begin{lem}\label{rewriteA} 
Assume \eqref{cs}, \eqref{c1} and \eqref{c2hs} or \eqref{c2}.
Recalling  \eqref{defH} and \eqref{dfvprime}, define,
for $z\in (0,\infty)$,
$\varphi\in [0,2\pi)$, $v,v_*\in \rd$ and $K\in [1,\infty)$, 
\begin{equation}\label{dfc}
c(v,v_*,z,\varphi):=a[v,v_*,G(z/|v-v_*|^\gamma),\varphi] \;\hbox{ and }\;
c_K(v,v_*,z,\varphi):=c(v,v_*,z,\varphi)\indiq_{\{z \leq K\}}.
\end{equation} 
For any bounded Lipschitz $\phi:\rd\mapsto\rr$,  any $v,v_*\in \rd$
\begin{eqnarray}\label{agood}
\cA\phi(v,v_*)&=&\int_0^\infty dz \int_0^{2\pi}d\varphi
\Big(\phi[v+c(v,v_*,z,\varphi)] -\phi[v]\Big).
\end{eqnarray}
For any $N\geq 1$, $K\in[1,\infty)$, $\bv=(v_1,\dots,v_N)\in(\rd)^N$, any bounded measurable
$\phi:(\rd)^N\mapsto\rr$,
\begin{equation}\label{lKgood}
\cL_{N,K} \phi(\bv)= \frac 1 {N} \sum_{i \ne j} \int_0^\infty dz\int_0^{2\pi}d\varphi 
[\phi(\bv + c_K(v_i,v_j,z,\varphi)\be_i) - \phi(\bv)].
\end{equation}
For any $N\geq 1$, any $\bv=(v_1,\dots,v_N)\in(\rd)^N$, any bounded Lipschitz
$\phi:(\rd)^N\mapsto\rr$,
\begin{equation}\label{lgood}
\cL_{N} \phi(\bv)= \frac 1 {N} \sum_{i \ne j} \int_0^\infty dz\int_0^{2\pi}d\varphi 
[\phi(\bv + c(v_i,v_j,z,\varphi)\be_i) - \phi(\bv)].
\end{equation}
\end{lem}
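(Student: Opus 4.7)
The plan is to prove \eqref{agood} by performing the change of variables $z = |v-v_*|^\gamma H(\theta)$ in the integral defining $\cA\phi$, and then to deduce \eqref{lKgood} and \eqref{lgood} by the exact same substitution applied term-by-term in the sum defining $\cL_{N,K}$ and $\cL_N$ (using the representation of $\sigma$ in terms of $(\theta,\varphi)$ that is given just after \eqref{dfvprime}, which brings $\int_{\Sp^2}\dots d\sigma$ into the form $\int_0^{\pi/2}\sin\theta\, d\theta\int_0^{2\pi} d\varphi$ and thus, using \eqref{cs}, turns $B\,d\sigma$ into $\Phi(|v_i-v_j|)\beta(\theta)d\theta\,d\varphi = |v_i-v_j|^\gamma \beta(\theta)d\theta\,d\varphi$).

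For the change of variables, I fix $v,v_*$ with $v\ne v_*$ (the case $v=v_*$ being trivial since then $a(v,v_*,\theta,\varphi)=0$) and set $z(\theta)=|v-v_*|^\gamma H(\theta)$. By definition of $H$ in \eqref{defH}, $H$ is $C^1$ and strictly decreasing on $(0,\pi/2)$ with $z'(\theta)=-|v-v_*|^\gamma \beta(\theta)$, and by construction $G(z/|v-v_*|^\gamma)=\theta$ whenever $z=z(\theta)$. Substituting into \eqref{afini} cancels the prefactor $|v-v_*|^\gamma$ and the weight $\beta(\theta)$, and transforms $a[v,v_*,\theta,\varphi]$ into $a[v,v_*,G(z/|v-v_*|^\gamma),\varphi]=c(v,v_*,z,\varphi)$, which is exactly \eqref{dfc}.

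The one point requiring care is the range of the new variable, where the two alternatives \eqref{c2} and \eqref{c2hs} must be treated differently. Under \eqref{c2}, $H(0^+)=+\infty$ because $\beta$ is non-integrable near $0$, so $z$ ranges over the full half-line $(0,\infty)$ and \eqref{agood} is obtained directly. Under \eqref{c2hs}, $H(0^+)=\pi/2<\infty$, so the natural range of $z$ is only $(0,|v-v_*|^\gamma \pi/2)$; I extend the integration to $(0,\infty)$ using the convention $G(z)=(\pi/2-z)_+$, which forces $G(z/|v-v_*|^\gamma)=0$, hence $c(v,v_*,z,\varphi)=0$ and a vanishing integrand, for $z>|v-v_*|^\gamma\pi/2$. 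This is the only mildly delicate step of the argument.

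For \eqref{lKgood}, the same substitution applied to each term of the sum in the definition of $\cL_{N,K}$ turns the cutoff indicator $\indiq_{\{\theta \geq G(K/|v_i-v_j|^\gamma)\}}$ into $\indiq_{\{H(\theta) \leq K/|v_i-v_j|^\gamma\}}=\indiq_{\{z\leq K\}}$, because $H$ is strictly decreasing and $G=H^{-1}$; this produces the factor $\indiq_{\{z\leq K\}}$ appearing in $c_K$, whence \eqref{lKgood}. Finally \eqref{lgood} is obtained in the same way with no cutoff (i.e.\ $K=\infty$), the only additional observation being that for a Lipschitz bounded $\phi$, the integrand $\phi(\bv+c(v_i,v_j,z,\varphi)\be_i)-\phi(\bv)$ is bounded by a constant times $|c(v_i,v_j,z,\varphi)|\leq C G(z/|v_i-v_j|^\gamma)|v_i-v_j|$, which together with \eqref{eG} gives integrability in $z$ and legitimates the manipulation.
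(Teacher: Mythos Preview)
Your proof is correct and follows exactly the same approach as the paper: the substitution $z=|v-v_*|^\gamma H(\theta)$ (equivalently $\theta=G(z/|v-v_*|^\gamma)$), giving $|v-v_*|^\gamma\beta(\theta)\,d\theta=dz$. You have simply spelled out more carefully the details the paper leaves implicit, namely the range of $z$ under the two alternatives \eqref{c2} and \eqref{c2hs}, the transformation of the cutoff indicator via the monotonicity of $H$, and the integrability check justifying \eqref{lgood} for Lipschitz $\phi$.
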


\begin{proof} To get \eqref{agood}, start from \eqref{afini} and use the substitution 
$\theta=G(z/|v-v_*|^\gamma)$ or equivalently $H(\theta) = z/|v-v_*|^\gamma$, which implies 
$|v-v_*|^\gamma\beta(\theta)d\theta=dz$. 
The expressions \eqref{lKgood} and \eqref{lgood} are checked similarly.
\end{proof}

\subsection{Accurate version of Tanaka's trick}

As was already noted by Tanaka \cite{T1979}, it is not possible to choose
$I$ in such a way that $X\mapsto I(X)$ is continuous. However, he 
found a way to overcome this difficulty, see also 
\cite[Lemma 2.6]{FMe2002sto}. Here we need the following accurate version of Tanaka's trick.

\begin{lem}\label{tanana}
Recall \eqref{dfvprime}. There are some measurable functions $\varphi_0,\varphi_1 : \rd \times \rd
\mapsto [0,2\pi)$, such that for all $X,Y\in\rd$, all $\varphi\in[0,2\pi)$,
\begin{align*}
&\Gamma(X,\varphi)\cdot\Gamma(Y, \varphi + \varphi_0(X,Y))=X\cdot Y \cos^2(\varphi+\varphi_1(X,Y) )
+ |X||Y|\sin^2(\varphi+\varphi_1(X,Y)),\\
&|\Gamma(X,\varphi) - \Gamma(Y, \varphi + \varphi_0(X,Y))|\leq |X - Y|.
\end{align*}
\end{lem}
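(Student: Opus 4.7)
The plan is to construct, for $(X,Y)\in\rd\times\rd$ non-collinear, a measurable adapted orthonormal basis $(e_1,e_2,e_3)$ of $\rd$ depending on $(X,Y)$, to express both $\Gamma(X,\varphi)$ and $\Gamma(Y,\psi)$ in it, and then to read off $\varphi_0$ and $\varphi_1$ by matching phases. I set $e_1:=X/|X|$, take $e_2$ to be the unit vector in the plane spanned by $X$ and $Y$ orthogonal to $X$ with $e_2\cdot Y>0$, and $e_3:=e_1\wedge e_2$; writing $\cos\alpha=X\cdot Y/(|X||Y|)$ then gives $Y=|Y|(\cos\alpha\,e_1+\sin\alpha\,e_2)$ with $\sin\alpha>0$. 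Since $I(X),J(X)$ lie in the plane spanned by $e_2,e_3$ and $(e_1,I(X)/|X|,J(X)/|X|)$ is direct orthonormal, there is a measurable angle $\beta_X=\beta_X(X,Y)$ with $I(X)=|X|(\cos\beta_X\,e_2+\sin\beta_X\,e_3)$ and $J(X)=e_1\wedge I(X)=|X|(-\sin\beta_X\,e_2+\cos\beta_X\,e_3)$, so that
\[
\Gamma(X,\varphi)=|X|\bigl[\cos(\varphi+\beta_X)\,e_2+\sin(\varphi+\beta_X)\,e_3\bigr].
\]
An analogous calculation in the basis $(\tilde e_2,e_3)$ of the plane perpendicular to $Y$, with $\tilde e_2:=-\sin\alpha\,e_1+\cos\alpha\,e_2$, produces a measurable $\beta_Y=\beta_Y(X,Y)$ with $\Gamma(Y,\psi)=|Y|[\cos(\psi+\beta_Y)\,\tilde e_2+\sin(\psi+\beta_Y)\,e_3]$.

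Using $e_2\cdot\tilde e_2=\cos\alpha$ and $e_3\cdot e_3=1$ (the remaining cross products vanish), together with $X\cdot Y=|X||Y|\cos\alpha$, the inner product becomes
\[
\Gamma(X,\varphi)\cdot\Gamma(Y,\psi)=X\cdot Y\,\cos(\varphi+\beta_X)\cos(\psi+\beta_Y)+|X||Y|\sin(\varphi+\beta_X)\sin(\psi+\beta_Y).
\]
Setting $\varphi_0(X,Y):=\beta_X-\beta_Y$ and $\varphi_1(X,Y):=\beta_X$ (both mod $2\pi$), the substitution $\psi=\varphi+\varphi_0$ makes $\psi+\beta_Y=\varphi+\varphi_1$, and the first identity drops out. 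For the norm bound I expand
\[
|\Gamma(X,\varphi)-\Gamma(Y,\varphi+\varphi_0)|^2=|X|^2+|Y|^2-2\bigl[X\cdot Y\cos^2(\varphi+\varphi_1)+|X||Y|\sin^2(\varphi+\varphi_1)\bigr],
\]
rewrite the bracket as $X\cdot Y+(|X||Y|-X\cdot Y)\sin^2(\varphi+\varphi_1)\geq X\cdot Y$ using $X\cdot Y\leq|X||Y|$, and conclude $|\Gamma(X,\varphi)-\Gamma(Y,\varphi+\varphi_0)|^2\leq|X-Y|^2$. Measurability of $\beta_X,\beta_Y$ is inherited from that of $I,J$ and of $(X,Y)\mapsto(e_2,e_3)$.

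The main obstacle will be the degenerate configurations where $X$ and $Y$ are collinear or one vanishes, since the construction of $e_2$ above breaks down there. For $X=0$ or $Y=0$ both assertions hold trivially with $\varphi_0=\varphi_1=0$. When $Y=\lambda X$ with $\lambda\neq 0$, the planes perpendicular to $X$ and to $Y$ coincide, and one reads off a measurable angle $\theta$ from $\cos\theta=I(X)\cdot I(Y)/(|X||Y|)$ and $\sin\theta=J(X)\cdot I(Y)/(|X||Y|)$, so that $I(Y)/|Y|=\cos\theta\,I(X)/|X|+\sin\theta\,J(X)/|X|$. Direct computation, taking into account the orientation reversal when $\lambda<0$, gives $\Gamma(Y,\psi)=(|Y|/|X|)\,\Gamma(X,\psi+\theta)$ in the case $\lambda>0$ and $\Gamma(Y,\psi)=(|Y|/|X|)\,\Gamma(X,\theta-\psi)$ in the case $\lambda<0$; in the first case $\varphi_0:=-\theta$ and $\varphi_1:=0$ work, in the second $\varphi_0:=\pi+\theta$ and $\varphi_1:=0$ do. Both identities are then checked directly using $X\cdot Y=\pm|X||Y|$ and $\Gamma(X,\varphi)\cdot\Gamma(X,\psi)=|X|^2\cos(\varphi-\psi)$.
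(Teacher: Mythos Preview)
Your proof is correct and follows essentially the same geometric idea as the paper's. In the paper's notation, your $|X|e_2$ and $|Y|\tilde e_2$ are exactly the vectors $i_X,i_Y$ chosen coplanar with $X,Y$ and satisfying $i_X\cdot i_Y=X\cdot Y$, while $|X|e_3,|Y|e_3$ are the $j_X,j_Y$; your $\beta_X,\beta_Y$ equal $-\varphi_X,-\varphi_Y$, so the resulting $\varphi_0,\varphi_1$ coincide. The only difference is presentational: the paper first derives the norm bound from the scalar-product identity and then establishes the latter, while you do it in the opposite order, and you make the coordinate computations and the degenerate collinear cases explicit where the paper leaves them implicit.
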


\begin{proof} First observe that the second claim follows from the first one: writing 
$\varphi_i=\varphi_i(X,Y)$
\begin{align*}
|\Gamma(X,\varphi) - \Gamma(Y, \varphi + \varphi_0)|^2
=& |\Gamma(X,\varphi)|^2+
|\Gamma(Y, \varphi + \varphi_0)|^2 -2 \Gamma(X,\varphi)\cdot\Gamma(Y, \varphi + \varphi_0)\\
=&|X|^2+|Y|^2-2(X\cdot Y \cos^2(\varphi+\varphi_1) + |X||Y|\sin^2(\varphi+\varphi_1))\\
\leq & |X|^2+|Y|^2-2X\cdot Y=|X-Y|^2.
\end{align*}
We next check the first claim.
Let thus $X$ and $Y$ be fixed. Observe that $\Gamma(X,\varphi)$ goes (at constant speed) 
all over the circle $C_X$  with radius $|X|$ lying in the plane orthogonal to $X$.
Let $i_X \in C_X$ and $i_Y\in C_Y$ such that $X,Y,i_X,i_Y$ belong to the same plane and $i_X\cdot i_Y=X\cdot Y$
(there are exactly two possible choices for the couple $(i_X,i_Y)$ if $X$ and $Y$ are not collinear, 
infinitely many otherwise). Consider $\varphi_X$  and $\varphi_Y$
such that $i_X:=\Gamma(X,\varphi_X)$ and $i_Y:=\Gamma(Y,\varphi_Y)$.
Define $j_X:=\Gamma(X,\varphi_X+\pi/2)$ and 
$j_Y:=\Gamma(Y,\varphi_Y+\pi/2)$. Then $j_X$ and $j_X$ are collinear 
(because both are orthogonal to the plane containing $X,Y,i_X,i_Y$), satisfy
$j_X \cdot j_Y=|j_X||j_Y|=|X||Y|$ and $i_X \cdot j_Y=i_Y.j_X=0$. 
Next, observe that $\Gamma(X,\varphi+\varphi_X)=i_X\cos\varphi + j_X \sin\varphi$
while $\Gamma(Y,\varphi+\varphi_Y)=i_Y\cos\varphi + j_Y \sin\varphi$. Consequently,
$\Gamma(X,\varphi+\varphi_X)\cdot\Gamma(Y,\varphi+\varphi_Y)= i_X\cdot i_Y \cos^2\varphi + 
j_X \cdot j_Y \sin^2\varphi
= X\cdot Y \cos^2\varphi + |X||Y|\sin^2\varphi$.
The conclusion follows: choose $\varphi_0:=\varphi_Y-\varphi_X$ and $\varphi_1:=-\varphi_X$ 
(all this modulo $2\pi$).
\end{proof}

\section{Main computations of the paper}\label{sec:MainComput}
\setcounter{equation}{0} 

The following estimate is our central argument.

\begin{lem}\label{fundest}
Recall that $G$ was defined in \eqref{defH} and that the deviation functions $c$ and $c_K$ were defined in 
\eqref{dfc}. For any $v,v_*,\tv,\tv_* \in \rd$, any $K\in [1,\infty)$,
\begin{align*}
&\int_0^\infty \int_0^{2\pi} \Big( 
\big|v+c(v,v_*,z,\varphi)-\tv-c_K(\tv,\tv_*,z,\varphi+\varphi_0(v-v_*,\tv-\tv_*)) \big|^2 - |v-\tv|^2
\Big) d\varphi   dz\\
\leq& A_1^K(v,v_*,\tv,\tv_*)+ A_2^K(v,v_*,\tv,\tv_*)+A_3^K(v,v_*,\tv,\tv_*),
\end{align*}
where, setting 
$\Phi_K(x)=\pi \int_0^K (1-\cos G(z/x^\gamma))dz$ and $\Psi_K(x)=\pi \int_K^\infty (1-\cos G(z/x^\gamma))dz$,
\begin{align*}
A_1^K(v,v_*,\tv,\tv_*)= &2  |v-v_*||\tv-\tv_*| 
\int_0^K \big[G(z/|v-v_*|^\gamma)-G(z/|\tv-\tv_*|^\gamma)\big]^2 dz,\\
A_2^K(v,v_*,\tv,\tv_*)= &-  \big[(v-\tv)+(v_*-\tv_*)\big]\cdot\big[(v-v_*)
\Phi_K(|v-v_*|)-(\tv-\tv_*)\Phi_K(|\tv-\tv_*|)\big],\\
A_3^K(v,v_*,\tv,\tv_*)= &(|v-v_*|^2+2|v-\tv||v-v_*|)\Psi_K(|v-v_*|).
\end{align*}
\end{lem}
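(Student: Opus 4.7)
The plan is to split the $z$-integration at $K$. On $\{z>K\}$ the term $c_K$ vanishes, so the integrand reduces to $2(v-\tv)\cdot c+|c|^2$. A direct computation from \eqref{dfvprime} gives $|c(v,v_*,z,\varphi)|^2=\tfrac{1-\cos\theta}{2}|v-v_*|^2$ independently of $\varphi$ (since $\Gamma(v-v_*,\varphi)\perp(v-v_*)$ with $|\Gamma(v-v_*,\varphi)|=|v-v_*|$), while $\int_0^{2\pi}\Gamma(v-v_*,\varphi)d\varphi=0$. The $\varphi$-average therefore equals $\pi(1-\cos\theta)[|v-v_*|^2-2(v-\tv)\cdot(v-v_*)]$; bounding $-(v-\tv)\cdot(v-v_*)\le|v-\tv||v-v_*|$ and integrating over $z\in(K,\infty)$ produces exactly $A_3^K$.

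\textbf{The range $z\le K$.} Here $c_K=\tc$ with $\tc:=c(\tv,\tv_*,z,\varphi+\varphi_0)$. I would expand
\[
|v+c-\tv-\tc|^2-|v-\tv|^2=2(v-\tv)\cdot(c-\tc)+|c-\tc|^2.
\]
By $2\pi$-periodicity one also has $\int_0^{2\pi}\Gamma(\tv-\tv_*,\varphi+\varphi_0)d\varphi=0$, so $\int_0^{2\pi}(c-\tc)d\varphi=-\pi(1-\cos\theta)(v-v_*)+\pi(1-\cos\tilde\theta)(\tv-\tv_*)$; after integration over $z\in(0,K)$ this gives $-2(v-\tv)\cdot[(v-v_*)\Phi_K(|v-v_*|)-(\tv-\tv_*)\Phi_K(|\tv-\tv_*|)]$, which accounts for twice the $(v-\tv)$-part of $A_2^K$. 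The central step is then the quadratic part. The only non-trivial cross term $\int_0^{2\pi}\Gamma(v-v_*,\varphi)\cdot\Gamma(\tv-\tv_*,\varphi+\varphi_0)d\varphi$ is computed via Lemma~\ref{tanana} together with $\int_0^{2\pi}\cos^2\!\psi\,d\psi=\int_0^{2\pi}\sin^2\!\psi\,d\psi=\pi$, yielding $\pi[(v-v_*)\cdot(\tv-\tv_*)+|v-v_*||\tv-\tv_*|]$, while single-$\Gamma$ cross terms integrate to zero. Setting $\alpha=1-\cos\theta$, $\tilde\alpha=1-\cos\tilde\theta$, $\vec u=v-v_*$, $\vec{\tilde u}=\tv-\tv_*$, one arrives at
\[
\int_0^{2\pi}|c-\tc|^2 d\varphi=\pi\alpha|\vec u|^2+\pi\tilde\alpha|\vec{\tilde u}|^2-\pi\alpha\tilde\alpha\,\vec u\cdot\vec{\tilde u}-\tfrac{\pi\sin\theta\sin\tilde\theta}{2}\big(\vec u\cdot\vec{\tilde u}+|\vec u||\vec{\tilde u}|\big).
\]

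\textbf{Closing the estimate.} The piece missing to reconstruct $A_2^K$ is $(\vec u-\vec{\tilde u})\cdot[\Phi_K(|\vec u|)\vec u-\Phi_K(|\vec{\tilde u}|)\vec{\tilde u}]=\pi\int_0^K[\alpha(|\vec u|^2-\vec u\cdot\vec{\tilde u})+\tilde\alpha(|\vec{\tilde u}|^2-\vec u\cdot\vec{\tilde u})]dz$, so it suffices to prove the pointwise (in $z$) inequality
\[
\int_0^{2\pi}|c-\tc|^2 d\varphi\;\le\;\pi\alpha(|\vec u|^2-\vec u\cdot\vec{\tilde u})+\pi\tilde\alpha(|\vec{\tilde u}|^2-\vec u\cdot\vec{\tilde u})+2|\vec u||\vec{\tilde u}|(\theta-\tilde\theta)^2.
\]
Using $\alpha+\tilde\alpha-\alpha\tilde\alpha=1-\cos\theta\cos\tilde\theta$ and $\cos(\theta-\tilde\theta)=\cos\theta\cos\tilde\theta+\sin\theta\sin\tilde\theta$, this is equivalent to
\[
\pi\,\vec u\cdot\vec{\tilde u}\,[1-\cos(\theta-\tilde\theta)]\le\tfrac{\pi\sin\theta\sin\tilde\theta}{2}\big(|\vec u||\vec{\tilde u}|-\vec u\cdot\vec{\tilde u}\big)+2|\vec u||\vec{\tilde u}|(\theta-\tilde\theta)^2,
\]
whose right-hand side is non-negative (from $\sin\theta,\sin\tilde\theta\ge 0$ and Cauchy--Schwarz), while the left-hand side is $\le\tfrac{\pi}{2}|\vec u||\vec{\tilde u}|(\theta-\tilde\theta)^2<2|\vec u||\vec{\tilde u}|(\theta-\tilde\theta)^2$ when $\vec u\cdot\vec{\tilde u}\ge 0$ (using $1-\cos x\le x^2/2$) and non-positive otherwise. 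The main obstacle of the whole argument is precisely the use of the accurate Tanaka trick (Lemma~\ref{tanana}): it is the presence of the extra $|\vec u||\vec{\tilde u}|$ (beyond the naive $\vec u\cdot\vec{\tilde u}$) in the $\varphi$-average of $\Gamma\cdot\Gamma$ that forces the prefactor $2|v-v_*||\tv-\tv_*|$ appearing in $A_1^K$; once this identity is in hand, everything else reduces to elementary algebra and the inequalities $1-\cos x\le x^2/2$ and $\vec u\cdot\vec{\tilde u}\le|\vec u||\vec{\tilde u}|$.
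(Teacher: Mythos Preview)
Your argument is correct and follows essentially the same route as the paper: split at $z=K$, use Lemma~\ref{tanana} to compute the $\varphi$-average of $\Gamma\cdot\Gamma$, invoke the trigonometric identity $\alpha+\tilde\alpha-\alpha\tilde\alpha-\sin\theta\sin\tilde\theta=1-\cos(\theta-\tilde\theta)$, and close with $1-\cos x\le x^2/2$ together with $\vec u\cdot\vec{\tilde u}\le|\vec u||\vec{\tilde u}|$. The only difference is organizational---you expand $|c-\tc|^2$ directly and reduce to a pointwise-in-$z$ inequality, whereas the paper computes $|c|^2$, $|\tc|^2$ and $c\cdot\tc$ separately before recombining---but the key ingredients and the final bookkeeping are identical.
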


\begin{proof}
We need to shorten notation.
We write $x=|v-v_*|$, $\tx=|\tv-\tv_*|$, $\varphi_0=\varphi_0(v-v_*,\tv-\tv_*)$,
$c=c(v,v_*,z,\varphi)$, $\tc=c(\tv,\tv_*,z,\varphi+\varphi_0)$ and
$\tc_K=c_K(\tv,\tv_*,z,\varphi+\varphi_0)=\tc\indiq_{\{z\leq K\}}$. We start with
\begin{align*}
\Delta_K:=& \int_0^\infty \int_0^{2\pi} \Big(|v+c - \tv-\tc_K|^2-|v-\tv|^2 \Big) d\varphi dz \\
=& \int_0^K \int_0^{2\pi} \Big( |c|^2+|\tc|^2-2c\cdot\tc +2(v-\tv)\cdot(c-\tc)\Big)  d\varphi dz\\
&+\int_K^\infty \int_0^{2\pi} \Big( |c|^2 +2(v-\tv)\cdot c\Big)  d\varphi dz.
\end{align*}

First, it holds that $|c|^2=|-(1-\cos G(z/x^\gamma))(v-v_*)+(\sin G(z/x^\gamma)) \Gamma(v-v_*,\varphi)|^2/4
=(1-\cos G(z/x^\gamma))|v-v_*|^2/2$.
We used that by definition, see \eqref{dfvprime}, $\Gamma(v-v_*,\varphi)$ has the same norm as $v-v_*$
and is orthogonal to $v-v_*$ and that $(1-\cos \theta)^2+(\sin \theta)^2=2-2\cos \theta$. Consequently,
we have
\begin{align*}
\int_0^K \int_0^{2\pi} |c|^2 d\varphi dz = \pi |v-v_*|^2 \int_0^K (1-\cos G(z/x^\gamma)) dz =x^2  \Phi_K(x).
\end{align*}
Similarly, we also have $\int_0^K \int_0^{2\pi} |\tc|^2 d\varphi dz = \tx^2  \Phi_K(\tx)$
and $\int_K^\infty \int_0^{2\pi} |c|^2 d\varphi dz = x^2  \Psi_K(x)$.

\vip

Next, using that $c=-(1-\cos G(z/x^\gamma))(v-v_*)/2+(\sin G(z/x^\gamma)) \Gamma(v-v_*,\varphi)/2$ and that 
$\int_0^{2\pi} \Gamma(v-v_*,\varphi) d\varphi=0$,
\begin{align*}
\int_0^K \int_0^{2\pi} c d\varphi dz= -(v-v_*) \pi \int_0^K (1-\cos G(z/x^\gamma))dz 
= - (v-v_*) \Phi_K(x).
\end{align*}
By the same way, $\int_0^K \int_0^{2\pi} \tc d\varphi dz= - (\tv-\tv_*) \Phi_K(\tx)$
and $\int_K^\infty \int_0^{2\pi} c d\varphi dz= - (v-v_*) \Psi_K(x)$.

\vip

Finally, $c\cdot\tc=[(1-\cos G(z/x^\gamma))(v-v_*)-(\sin G(z/x^\gamma))\Gamma(v-v_*,\varphi)]
\cdot[(1-\cos G(z/\tx^\gamma))(\tv-\tv_*)-(\sin G(z/\tx^\gamma))
\Gamma(\tv-\tv_*,\varphi+\varphi_0)]/4$. Since $\int_0^{2\pi} \Gamma(v-v_*,\varphi) d\varphi=
\int_0^{2\pi} \Gamma(\tv-\tv_*,\varphi+\varphi_0) d\varphi=0$, we get
\begin{align*}
\int_0^{2\pi} c\cdot\tc d\varphi =& \frac\pi 2 (1-\cos G(z/x^\gamma))(1-\cos G(z/\tx^\gamma)) 
(v-v_*)\cdot(\tv-\tv_*)\\
&+\frac 1 4 
(\sin G(z/x^\gamma))(\sin G(z/\tx^\gamma)) 
\int_0^{2\pi}\Gamma(v-v_*,\varphi)\cdot \Gamma(\tv-\tv_*,\varphi+\varphi_0)  d\varphi.
\end{align*}
Recalling Lemma \ref{tanana} and using that $\int_0^{2\pi} \cos^2(\varphi+\varphi_1) d\varphi=
\int_0^{2\pi} \sin^2(\varphi+\varphi_1) d\varphi = \pi$, we obtain
\begin{align*}
\int_0^{2\pi} c\cdot\tc d\varphi =& \frac\pi 2 (1-\cos G(z/x^\gamma))(1-\cos G(z/\tx^\gamma)) 
(v-v_*)\cdot(\tv-\tv_*)\\
&+ \frac \pi 4 (\sin G(z/x^\gamma))(\sin G(z/\tx^\gamma)) \big[(v-v_*)\cdot(\tv-\tv_*)+|v-v_*||\tv-\tv_*| \big].
\end{align*}
But $G$ takes values in $(0,\pi/2)$, so that, since $|v-v_*||\tv-\tv_*|\geq (v-v_*)\cdot(\tv-\tv_*)$,
\begin{align*}
&\int_0^{2\pi} c\cdot\tc d\varphi \\
\geq&  \frac\pi 2 [(1-\cos G(z/x^\gamma))(1-\cos G(z/\tx^\gamma))
+(\sin G(z/x^\gamma))(\sin G(z/\tx^\gamma))] (v-v_*)\cdot(\tv-\tv_*)
\\
=& \frac\pi 2 [(1-\cos G(z/x^\gamma))+(1-\cos G(z/\tx^\gamma))](v-v_*)\cdot(\tv-\tv_*)\\
& -\frac\pi 2(1-\cos(G(z/x^\gamma)-G(z/\tx^\gamma)))(v-v_*)\cdot(\tv-\tv_*).
\end{align*}
Using that $\pi(1-\cos \theta)\leq 2 \theta^2$, we thus get 
\begin{align*}
\int_0^K \int_0^{2\pi} c\cdot\tc d\varphi dz 
\geq & (v-v_*)\cdot(\tv-\tv_*)\frac{\Phi_K(x)+\Phi_K(\tx)}2
-  x \tx  \int_0^K (G(z/x^\gamma)-G(z/\tx^\gamma))^2 dz.
\end{align*}

\vip

All in all, we find
\begin{align*}
\Delta_K \leq & x^2\Phi_K(x)+\tx^2\Phi_K(\tx)-(v-v_*)\cdot(\tv-\tv_*)[\Phi_K(x)+\Phi_K(\tx) ]\\
&+ 2(v-\tv)\cdot[(\tv-\tv_*)\Phi_K(\tx) -(v-v_*)\Phi_K(x)] \\
&+ 2 x\tx \int_0^K (G(z/x^\gamma)-G(z/\tx^\gamma))^2 dz \\
&+ x^2\Psi_K(x)-2(v-\tv)\cdot(v-v_*)\Psi_K(x).
\end{align*}
Recalling that $x=|v-v_*|$, $\tx=|\tv-\tv_*|$, we realize that the third line is nothing
but $A_1^K(v,v_*,\tv,\tv_*)$ while the fourth one is bounded from above by $A_3^K(v,v_*,\tv,\tv_*)$.
To conclude, it suffices to note that the sum of the terms on the two first lines equals
\begin{align*}
=&(v-v_*)\cdot[(v-v_*)-(\tv-\tv_*)-2(v-\tv)]\Phi_K(x) \\
&+ (\tv-\tv_*)\cdot[(\tv-\tv_*)-
(v-v_*)+2(v-\tv)]\Phi_K(\tx)\\
=&-(v-v_*)\cdot((v-\tv)+(v_*-\tv_*))\Phi_K(x) + (\tv-\tv_*)\cdot((v-\tv)+(v_*-\tv_*))\Phi_K(\tx)
\end{align*}
which is $A_2^K(v,v_*,\tv,\tv_*)$ as desired.
\end{proof}

Next, we study each term found in the previous inequality.
We start with the Maxwell case.

\begin{lem}\label{furthermax} Assume \eqref{cs}, \eqref{c1} with $\gamma=0$,
\eqref{c2} and adopt the notation of Lemma \ref{fundest}. For all $K\in [1,\infty)$,
all $v,v_*,\tv,\tv_* \in \rd$,

(i) $A_1^K(v,v_*,\tv,\tv_*)= 0$,

(ii) $A_2^K(v,v_*,\tv,\tv_*)=\zeta_K [-|v-\tv|^2+|v_*-\tv_*|^2]$ where $\zeta_K=\pi\int_0^K(1-\cos G(z))dz$,

(iii) $A_3^K(v,v_*,\tv,\tv_*) \leq C (|v|^2+|v_*|^2 + |\tv|^2) K^{1-2/\nu}$.
\end{lem}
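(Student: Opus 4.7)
The proof is essentially an exercise in specializing the general formulas of Lemma \ref{fundest} to the case $\gamma=0$. The key simplification is that in this case $x^\gamma = 1$ for every $x>0$, so that $G(z/|v-v_*|^\gamma) = G(z)$ and consequently $\Phi_K, \Psi_K$ become numerical constants independent of the relative velocity. After that, items (i) and (ii) collapse into short algebraic manipulations, and (iii) reduces to a size estimate on $\Psi_K$ using \eqref{eG}.

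\textbf{(i)} Since $\gamma=0$, the integrand $[G(z/|v-v_*|^\gamma) - G(z/|\tv-\tv_*|^\gamma)]^2$ in the definition of $A_1^K$ is identically zero, so $A_1^K = 0$.

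\textbf{(ii)} Set $\zeta_K := \pi \int_0^K (1-\cos G(z))\,dz$; then $\Phi_K(x) = \zeta_K$ for every $x>0$. Substituting in the definition of $A_2^K$,
\begin{align*}
A_2^K(v,v_*,\tv,\tv_*)
= -\zeta_K \bigl[(v-\tv)+(v_*-\tv_*)\bigr] \cdot \bigl[(v-\tv) - (v_*-\tv_*)\bigr]
= \zeta_K\bigl[-|v-\tv|^2 + |v_*-\tv_*|^2\bigr],
\end{align*}
which is (ii).

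\textbf{(iii)} I would use \eqref{eG}: for $z>0$, $G(z) \leq c_3(1+z)^{-1/\nu}$, whence $1-\cos G(z) \leq \tfrac12 G(z)^2 \leq C(1+z)^{-2/\nu}$. Since $\nu \in (0,1)$ we have $2/\nu > 2 > 1$, so
\begin{align*}
\Psi_K(|v-v_*|) = \pi \int_K^\infty (1-\cos G(z))\,dz \leq C \int_K^\infty (1+z)^{-2/\nu}\,dz \leq C K^{1-2/\nu}
\end{align*}
for all $K\geq 1$ (absorbing constants independent of $K$). It then remains to control the prefactor $|v-v_*|^2 + 2|v-\tv||v-v_*|$ by a constant times $|v|^2+|v_*|^2+|\tv|^2$; this is immediate from $|v-v_*|^2 \leq 2(|v|^2+|v_*|^2)$ together with $2|v-\tv||v-v_*| \leq |v-\tv|^2 + |v-v_*|^2$, yielding (iii).

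There is no real obstacle here: (i) is a tautology, (ii) is a one-line cancellation, and the only mildly substantive ingredient is the integrability estimate $\int_K^\infty G(z)^2\,dz \lesssim K^{1-2/\nu}$ coming from \eqref{eG}, which crucially requires $\nu < 2$ (and uses the range $\nu \in (0,1)$ assumed in the statement). Note this lemma is specific to Maxwell molecules because for $\gamma > 0$ the functions $\Phi_K, \Psi_K$ genuinely depend on the relative velocities and $A_1^K$ is nonzero; the corresponding estimates for hard potentials will presumably require \eqref{c3} to handle $A_1^K$ and moment bounds on $f$ to handle the remaining $x$-dependence.
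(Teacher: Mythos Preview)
Your proof is correct and follows essentially the same approach as the paper: in both cases the point is that when $\gamma=0$ the quantities $G(z/|v-v_*|^\gamma)$, $\Phi_K$, $\Psi_K$ lose all velocity dependence, making (i) trivial and (ii) a direct algebraic identity, while (iii) reduces to the tail estimate $\int_K^\infty G(z)^2\,dz \leq C K^{1-2/\nu}$ from \eqref{eG}. You have simply supplied more detail than the paper's three-line proof.
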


\begin{proof}
Point (i) is obvious. Point (ii) immediately follows from the fact that 
$\Psi_k(x)=\zeta_K$ does not depend on $x$.
Point (iii) holds true because
$\Psi_K(x) = \pi \int_K^\infty (1-\cos G(z))dz \leq \pi \int_K^\infty G^2(z)dz \leq
C K^{1-2/\nu}$ by \eqref{eG}.
\end{proof}

The case of hard potentials is much more complicated. The following result gives 
a possible and useful upper bound on the $A^K_i$ functions. 

\begin{lem}\label{further} Assume \eqref{cs}, \eqref{c1} with $\gamma\in(0,1)$,
\eqref{c2} and adopt the notation of Lemma \ref{fundest}.

(i) For all $q>0$, there is $C_q>0$ 
such that for all $M\geq 1$, all $K\in [1,\infty)$, all $v,v_*,\tv,\tv_* \in \rd$,
\begin{align*}
A_1^K(v,v_*,\tv,\tv_*) \leq & M(|v-\tv|^2+ |v_*-\tv_*|^2) + C_q e^{-M^{q/\gamma}} e^{C_q (|v|^q+|v_*|^q)}.
\end{align*}

(ii) There is $C>0$ such that for all $K\in [1,\infty)$, all $v,v_*,\tv,\tv_* \in \rd$ and all $z_*\in\rd$,
\begin{align*}
A_2^K(v,v_*,\tv,\tv_*)- A_2^K(v,z_*,\tv,\tv_*) \leq & C\Big[|v-\tv|^2 + |v_*-\tv_*|^2 \\
& \hskip1cm + |v_*-z_*|^2(1+|v|+|v_*|+|z_*|)^{2\gamma/(1-\gamma)} \Big].
\end{align*}

(iii) There is $C>0$ such that for all $K\in [1,\infty)$, all $v,v_*,\tv,\tv_* \in \rd$,
\begin{align*}
A_3^K(v,v_*,\tv,\tv_*) \leq & C(1+ |v|^{4\gamma/\nu+2} + |v_*|^{4\gamma/\nu+2} 
+ |\tv|^2+ |\tv_*|^2) K^{1-2/\nu}.
\end{align*}
\end{lem}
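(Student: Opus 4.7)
The plan is to prove the three parts independently, each starting from Lemma~\ref{fundest}. Part (i) leverages the integral bound \eqref{c3}; part (ii) proceeds by an algebraic rearrangement of the difference combined with Lipschitz-type estimates on the auxiliary function $\psi(w):=w\Phi_K(|w|)$; part (iii) is a direct consequence of the asymptotics \eqref{eG}. The unifying tool is Young's inequality, used to trade mixed products for squares plus polynomial tails, together with the elementary bound $x^r\leq C_{r,q}\,e^{x^q}$ to convert polynomial growth in $|v-v_*|$ into the exponential factor permitted by the moments \eqref{c5}.

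For (i), set $a=|v-v_*|$, $b=|\tv-\tv_*|$, $Y=|v-\tv|+|v_*-\tv_*|$. Applying \eqref{c3} with $x=a^\gamma,y=b^\gamma$ to the definition of $A_1^K$ yields $A_1^K\leq 2c_4\,ab\,(a^\gamma-b^\gamma)^2/(a^\gamma+b^\gamma)$. Two complementary estimates are then available: a H\"older-type bound $A_1^K\leq 2c_4\,ab\,Y^\gamma$ (from $|a^\gamma-b^\gamma|\leq|a-b|^\gamma$ together with $|a^\gamma-b^\gamma|\leq a^\gamma+b^\gamma$) and a mean-value-type bound $A_1^K\leq C(a\vee b)^\gamma Y^2$ (valid when $a$ and $b$ are comparable). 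Using $b\leq a+Y$ and $a\leq|v|+|v_*|$, successive applications of Young's inequality give a control of the form $A_1^K\leq MY^2 + CM^{-\gamma/(2-\gamma)}(|v|+|v_*|)^{4/(2-\gamma)} + C'M^{-\gamma/(1-\gamma)}(|v|+|v_*|)^{2/(1-\gamma)}$; the polynomial tails in $|v|+|v_*|$ are then absorbed into $C_q e^{-M^{q/\gamma}}e^{C_q(|v|^q+|v_*|^q)}$ after a direct case analysis based on whether $(a\vee b)$ exceeds $(M/C)^{1/\gamma}$, using the crude bound $A_1^K\leq C(a\vee b)^{2+\gamma}$ in the large regime and the inequality $x^r\leq C_{r,q}e^{x^q}$. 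This is the most delicate part, because $b$ admits no a priori bound in terms of $(v,v_*)$ alone.

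For (ii), the bound $\Phi_K(x)\leq Cx^\gamma$ yields $|\psi(w)|\leq C|w|^{1+\gamma}$ and the Lipschitz estimate $|\psi(w)-\psi(w')|\leq C(|w|\vee|w'|)^\gamma|w-w'|$. A direct algebraic rearrangement gives
\begin{equation*}
\begin{split}
A_2^K(v,v_*,\tv,\tv_*)-A_2^K(v,z_*,\tv,\tv_*) &= [(v-\tv)+(v_*-\tv_*)]\,[\psi(v-z_*)-\psi(v-v_*)] \\
&\quad + (z_*-v_*)\,[\psi(v-z_*)-\psi(\tv-\tv_*)].
\end{split}
\end{equation*}
The first summand, via the Lipschitz bound in $v_*-z_*$ with constant $C(1+|v|+|v_*|+|z_*|)^\gamma$ followed by AM-GM, contributes $C(|v-\tv|^2+|v_*-\tv_*|^2+|v_*-z_*|^2(1+|v|+|v_*|+|z_*|)^{2\gamma})$, which is already compatible with the target exponent since $2\gamma\leq 2\gamma/(1-\gamma)$. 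For the second summand, one bounds $(|v-z_*|\vee|\tv-\tv_*|)^\gamma$ via subadditivity of $t\mapsto t^\gamma$ applied to $|\tv-\tv_*|\leq|v-v_*|+|v-\tv|+|v_*-\tv_*|$, and expands the resulting product. The exponent $2\gamma/(1-\gamma)$ emerges from an application of Young's inequality with conjugate exponents $p=1/\gamma$, $q=1/(1-\gamma)$ to products of the form $|v-\tv|^{2\gamma}\,|v_*-z_*|^2\leq\varepsilon|v-\tv|^2+C_\varepsilon|v_*-z_*|^{2/(1-\gamma)}$, after which one writes $|v_*-z_*|^{2/(1-\gamma)}=|v_*-z_*|^2\cdot|v_*-z_*|^{2\gamma/(1-\gamma)}$ and uses $|v_*-z_*|\leq 1+|v|+|v_*|+|z_*|$.

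Part (iii) is the simplest. From $G(u)\leq c_3(1+u)^{-1/\nu}$ in \eqref{eG}, the substitution $u=z/x^\gamma$ gives $\Psi_K(x)\leq C\int_K^\infty(1+z/x^\gamma)^{-2/\nu}dz\leq CK^{1-2/\nu}x^{2\gamma/\nu}$ for $K\geq 1$ (since $\nu<1$ implies $2/\nu>2$). Inserting into the definition of $A_3^K$, the first piece gives $|v-v_*|^2\Psi_K(|v-v_*|)\leq CK^{1-2/\nu}|v-v_*|^{2+2\gamma/\nu}\leq CK^{1-2/\nu}(|v|^{2+2\gamma/\nu}+|v_*|^{2+2\gamma/\nu})$. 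For the mixed piece, Young's inequality yields $2|v-\tv|\cdot|v-v_*|^{1+2\gamma/\nu}\leq|v-\tv|^2+|v-v_*|^{2+4\gamma/\nu}$; combining with $|v-\tv|^2\leq 2|v|^2+2|\tv|^2$ and $|v-v_*|^{2+4\gamma/\nu}\leq C(|v|^{2+4\gamma/\nu}+|v_*|^{2+4\gamma/\nu})$ produces the $|\tv|^2$ contribution and the dominant exponent $4\gamma/\nu+2$.
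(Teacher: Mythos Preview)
Your treatment of parts (ii) and (iii) follows the paper's argument closely and is correct; in particular, your algebraic rearrangement of $\Delta_2^K$ and the Lipschitz estimate $|X\Phi_K(|X|)-Y\Phi_K(|Y|)|\le C|X-Y|(|X|^\gamma+|Y|^\gamma)$ are exactly what the paper uses, and your bound on $\Psi_K$ in (iii) is identical.

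Part (i), however, has a real gap. Your Young-inequality route gives at best
\[
A_1^K \le M Y^2 + C\,M^{-\alpha}\,(|v|+|v_*|)^\beta
\]
for some $\alpha,\beta>0$, but the remainder $M^{-\alpha}(|v|+|v_*|)^\beta$ decays only polynomially in $M$ and therefore cannot be dominated by $C_q e^{-M^{q/\gamma}}e^{C_q(|v|^q+|v_*|^q)}$: for fixed $|v|+|v_*|>0$ and $M\to\infty$ the left side is $\asymp M^{-\alpha}$ while the right side is $\asymp e^{-M^{q/\gamma}}$. Your proposed fix --- splitting on whether $a\vee b$ exceeds $(M/C)^{1/\gamma}$ and using $A_1^K\le C(a\vee b)^{2+\gamma}$ in the large regime --- also fails, because $a\vee b$ may equal $b=|\tv-\tv_*|$, which admits no bound in terms of $(v,v_*)$ alone; the resulting factor $e^{C_q(a\vee b)^q}$ then cannot be replaced by $e^{C_q(|v|^q+|v_*|^q)}$. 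You flag this difficulty yourself (``$b$ admits no a priori bound in terms of $(v,v_*)$ alone'') but do not resolve it.

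The paper's device is to split instead on the full coefficient $\dfrac{a\wedge b}{(a\vee b)^{1-\gamma}}$ appearing in the sharper bound
\[
A_1^K \le 8c_4\,\frac{a\wedge b}{(a\vee b)^{1-\gamma}}\,Y^2 .
\]
When this coefficient exceeds $M/2$, one obtains simultaneously $(a\vee b)\le \big(16c_4(a\wedge b)/M\big)^{1/(1-\gamma)}$ and $(a\wedge b)^\gamma \ge M/(16c_4)$. The first inequality converts $(a\vee b)^{2+\gamma}$ into a power of $a\wedge b$; since $a\wedge b\le a\le |v|+|v_*|$, the crude bound $A_1^K\le 8c_4(a\vee b)^{2+\gamma}$ becomes a polynomial in $|v|+|v_*|$ times the indicator $\{(|v|+|v_*|)^\gamma\ge M/(16c_4)\}$, which is then absorbed into $e^{-M^{q/\gamma}}e^{C_q(|v|^q+|v_*|^q)}$ exactly as you describe. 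The point you are missing is that the split must be engineered so that the ``bad'' set is described by $a\wedge b$, not $a\vee b$.
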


This lemma is very technical. The reason is the following. The
solution $(f_t)_{t\geq 0}$ has bounded exponential moments while, on the contrary, the particle system
has only a bounded energy (moment of order $2$). If $K\in[1,\infty)$, the particle system
has all moments finite, which makes all the computations licit, but the moments of order strictly
greater than $2$ are not uniformly bounded with respect to $K$ (at least, we were not able to show it). 
We will use the previous estimates with $v,v_*$ (and $z_*$) taken from the solution $f_t$ and
$\tv,\tv_*$ taken in the particle system. Thus, it is very important that 
these estimates do not involve powers greater than $2$ of $\tv,\tv_*$. For example in point (i), 
only $v,v_*$ appear in the exponential and this is crucial.

\begin{proof} Using \eqref{c3} and that $|x^\gamma - y^\gamma|
\leq 2|x-y|/(x^{1-\gamma}+y^{1-\gamma})$, we get
\begin{align}\label{ar1}
A_1^K(v,v_*,\tv,\tv_*)\leq& 2 c_4 |v-v_*||\tv-\tv_*| \frac{(|v-v_*|^\gamma-|\tv-\tv_*|^\gamma)^2}
{|v-v_*|^\gamma+|\tv-\tv_*|^\gamma} \\
\leq & 8 c_4 \frac{|v-v_*| \land |\tv-\tv_*|}{(|v-v_*| \lor |\tv-\tv_*|)^{1-\gamma}} 
(|v-v_*|-|\tv-\tv_*|)^2.\nonumber
\end{align}
Now for any $M\geq 1$, this is bounded from above by
\begin{align*}
& \frac M 2 (|v-v_*|-|\tv-\tv_*|)^2 + 8c_4 (|v-v_*| \lor |\tv-\tv_*|)^{2+\gamma}
\indiq_{\{ 8c_4 \frac{|v-v_*| \land |\tv-\tv_*|}{(|v-v_*| \lor |\tv-\tv_*|)^{1-\gamma}} \geq \frac M2   \}}\\
\leq & \frac M2 (|v-\tv|+|v_*-\tv_*|)^2 + 8 c_4 
\left[\frac{16c_4}{M}(|v-v_*| \land |\tv-\tv_*|)\right]^{\frac{2+\gamma}{1-\gamma}}
\indiq_{\{ \frac{|v-v_*| \land |\tv-\tv_*|}{(|v-v_*| \lor |\tv-\tv_*|)^{1-\gamma}} \geq \frac M {16 c_4}   \}} \\
\leq &  M(|v-\tv|^2+|v_*-\tv_*|^2) + 8 c_4 
\left[16c_4 (|v-v_*| \land |\tv-\tv_*|)\right]^{\frac{2+\gamma}{1-\gamma}}
\indiq_{\{ (|v-v_*| \land |\tv-\tv_*|)^\gamma \geq \frac M {16c_4}  \}}\\
\leq &  M(|v-\tv|^2+|v_*-\tv_*|^2) + 8 c_4 
\left[16c_4 (|v|+|v_*|)\right]^{\frac{2+\gamma}{1-\gamma}}
\indiq_{\{ (|v|+|v_*|)^\gamma \geq \frac M {16c_4}  \}}
\end{align*}
Fix now $q>0$ and observe that 
$$
x^{\frac{2+\gamma}{1-\gamma}} \indiq_{\{ x^\gamma \geq  \frac M { 16c_4} \}} \leq 
x^{\frac{2+\gamma}{1-\gamma}} e^{-M^{q/\gamma}} e^{(16c_4)^{q/\gamma} x^q } \leq C_q  e^{-M^{q/\gamma}} e^{2(16c_4)^{q/\gamma} x^q }.
$$
Point (i) follows.

\vip

Point (ii) is quite delicate. First, there is $C$ such that for all $K\in [1,\infty)$, all $x,y>0$,
$$
\Phi_K(x) \leq C x^\gamma \quad \hbox{and}\quad |\Phi_K(x)-\Phi_K(y)| \leq C |x^\gamma-y^\gamma|.
$$
Indeed, it is enough to prove that for $\Gamma_K(x)=\int_0^K(1-\cos G(z/x))dz$, 
$\Gamma_K(0)=0$ and  $|\Gamma_K'(x)| \leq C$. But $\Gamma_K(x)=x \int_0^{K/x}(1-\cos G(z))dz 
\le x \int_0^\infty G^2(z)dz  $,
so that $\Gamma_K(0)=0$ and $|\Gamma_K'(x)| \leq \int_0^\infty (1-\cos G(z))dz + x(K/x^2) (1-\cos G(K/x))
\leq \int_0^\infty G^2(z)dz + (K/x) G^2(K/x)$, which is uniformly bounded by \eqref{eG}.
Consequently, for all $X,Y\in\rd$, 
\begin{align*}
|X \Phi_K(|X|)- Y \Phi_K(|Y|)| \leq& C |X-Y| (|X|^\gamma + |Y|^\gamma) + C (|X|+|Y|) | |X|^\gamma - |Y|^\gamma|.
\end{align*}
Using again that $|x^\gamma - y^\gamma|\leq 2 |x-y|/(x^{1-\gamma}+y^{1-\gamma})$, we easily conclude that
\begin{align}\label{fi}
|X \Phi_K(|X|)- Y \Phi_K(|Y|)| \leq& C |X-Y| (|X|^\gamma + |Y|^\gamma).
\end{align}
Now we write
\begin{align}\label{topcool1}
\Delta_2^K:=&A_2^K(v,v_*,\tv,\tv_*)-A_2^K(v,z_*,\tv,\tv_*) \nonumber \\
=& -  \big[(v-\tv)+(v_*-\tv_*)\big]\cdot\big[(v-v_*) \Phi_K(|v-v_*|)-(\tv-\tv_*)\Phi_K(|\tv-\tv_*|)\big]
\nonumber\\
&+\big[(v-\tv)+(z_*-\tv_*)\big]\cdot\big[(v-z_*) \Phi_K(|v-z_*|)-(\tv-\tv_*)\Phi_K(|\tv-\tv_*|)\big] 
\nonumber\\
=& -  \big[(v-\tv)+(v_*-\tv_*)\big]\cdot\big[(v-v_*) \Phi_K(|v-v_*|)-(v-z_*) \Phi_K(|v-z_*|)\big]\\
&+ (z_*-v_* )\cdot\big[(v-z_*) \Phi_K(|v-z_*|)-(\tv-\tv_*)\Phi_K(|\tv-\tv_*|)\big].\nonumber
\end{align}
By \eqref{fi} and the Young inequality, we deduce that
\begin{align*}
\Delta_2^K\leq &C (|v-\tv|+|v_*-\tv_*|) |v_*-z_*| (|v-v_*|^\gamma+ |v-z_*|^\gamma)\\
&+C |z_*-v_* | (|v-\tv|+|z_*-\tv_*|)(|v-z_*|^\gamma + |\tv-\tv_*|^\gamma)\\
\leq & C  [(|v-\tv|+|v_*-\tv_*|)^2 +  |v_*-z_*|^2 (|v-v_*|^\gamma+ |v-z_*|^\gamma)^2 ]\\
&+ C|z_*-v_* | (|v-\tv|+|z_*-v_*|+|v_*-\tv_*|)(|v-z_*|^\gamma + (|v-\tv|+|v-v_*|+|v_*-\tv_*|)^\gamma).
\end{align*}
The first term is clearly bounded by
$C  (|v-\tv|^2+|v_*-\tv_*|^2 + |v_*-z_*|^2 (1+|v| +|v_*|+|z_*|)^{2\gamma})$ which fits the statement, since
$2\gamma \leq 2\gamma/(1-\gamma)$. We next bound the second term by
\begin{align*}
&C|z_*-v_* |^2 (|v-z_*| + |v-v_*|)^\gamma\\
&+ C|z_*-v_* |^2 (|v-\tv|+|v_*-\tv_*|)^\gamma \\
&+ C |z_*-v_* |(|v-\tv|+|v_*-\tv_*|) (|v-z_*|+|v-v_*|)^\gamma\\
&+ C  |z_*-v_* |(|v-\tv|+|v_*-\tv_*|)^{1+\gamma}.
\end{align*}
Using that $x^2 y^\gamma \leq  x^{4/(2-\gamma)} + y^2$ (for the second line), that $xyz^\gamma\leq (xz^\gamma)^2
+y^2$ (for the third line) and that $xy^{1+\gamma} \leq x^{2/(1-\gamma)} + y^2$, we obtain the upper-bound
\begin{align*}
&C|z_*-v_* |^2 (1+|v|+|z_*| + |v_*|)^\gamma\\
&+ C (|v-\tv|+|v_*-\tv_*|)^2 + |z_*-v_* |^{4/(2-\gamma)} \\
&+ C (|v-\tv|+|v_*-\tv_*|)^2 + |z_*-v_* |^2(|v-z_*|+|v-v_*|)^{2\gamma}\\
&+ C  (|v-\tv|+|v_*-\tv_*|)^2+ |z_*-v_*|^{2/(1-\gamma)},
\end{align*}
which is bounded by
\begin{align*} 
C (|v-\tv|^2+|v_*-\tv_*|^2) + C |z_*-v_*|^2\Big\{(1+|v|+|z_*| + |v_*|)^\gamma
+|z_*-v_* |^{4/(2-\gamma)-2} \\
+ (|v-z_*|+|v-v_*|)^{2\gamma}+|z_*-v_*|^{2/(1-\gamma)-2}  \Big\}.
\end{align*}
One easily concludes, using that $\max\{\gamma,4/(2-\gamma)-2,2\gamma, 2/(1-\gamma)-2\}=
2\gamma/(1-\gamma)$.

\vip

We finally check point (iii). Using \eqref{eG}, we deduce that $1-\cos(G(z/x^\gamma))\leq G^2(z/x^\gamma)
\leq C (z/x^\gamma)^{-2/\nu}$, whence $\Psi_K(x) \leq C x^{2\gamma/\nu} \int_K^\infty z^{-2/\nu}dz
= Cx^{2\gamma/\nu} K^{1-2/\nu}$. Thus
\begin{align}\label{ar2}
A_3^K(v,v_*,\tv,\tv_*) \leq & C(|v-v_*|^2+|v-v_*||\tv-\tv_*|)|v-v_*|^{2\gamma/\nu} K^{1-2/\nu},
\end{align}
from which we easily conclude, using that $|\tv-\tv_*||v-v_*|^{1+2\gamma/\nu}\leq |\tv-\tv_*|^2+
|v-v_*|^{2+4\gamma/\nu}$.
\end{proof}

We conclude with the hard spheres case.

\begin{lem}\label{furtherhs} Assume \eqref{cs}, \eqref{c1} with $\gamma=1$,
\eqref{c2hs} and adopt the notation of Lemma \ref{fundest}.

(i) For all $q>0$, there is $C_q>0$ 
such that for all $M\geq 1$, all $K\in [1,\infty)$, all $v,v_*,\tv,\tv_* \in \rd$,
\begin{align*}
A_1^K(v,v_*,\tv,\tv_*) \leq & M(|v-\tv|^2+ |v_*-\tv_*|^2) + C_q K(|\tv|+|\tv_*| )
e^{-M^{q}} e^{C_q (|v|^q+|v_*|^q)}.
\end{align*}

(ii) For all $q>0$, there is $C_q>0$ 
such that for all $M\geq 1$, all $K\in [1,\infty)$, all $v,v_*,\tv,\tv_* \in \rd$,
\begin{align*}
A_2^K(v,v_*,\tv,\tv_*)- A_2^K(v,z_*,\tv,\tv_*) \leq & M(|v-\tv|^2 + |v_*-\tv_*|^2)
+  C |v_*-z_*|^2(1+|v|+|v_*|+|z_*|)^{2}\\
&+ C_q (1+|\tv|+|\tv_*|) Ke^{- M^q } e^{ C_q(|v|^q+|v_*|^q+|z_*|^q)}
\end{align*}

(iii) For all $q>0$, there is $C_q>0$ such that for all $K\in [1,\infty)$, all $v,v_*,\tv,\tv_* \in \rd$,
\begin{align*}
A_3^K(v,v_*,\tv,\tv_*) \leq & C_q(1+|\tv|)e^{-K^q}e^{ C_q(|v|^q+|v_*|^q+|z_*|^q)}.
\end{align*}
\end{lem}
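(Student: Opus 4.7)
The plan is to treat the three estimates separately, each exploiting the specific structure of hard spheres, namely $G(u)=(\pi/2-u)_+$. This implies $\Phi_K(x)\leq\min(Cx,\pi K)$ with $|\Phi_K(x)-\Phi_K(y)|\leq C|x-y|$, and $\Psi_K(x)=0$ for $x\leq 2K/\pi$ with $\Psi_K(x)\leq Cx$ otherwise. I will repeatedly use the elementary estimate that for any $q>0$ and any integer $n\geq 0$, there is $C_{n,q}$ with $x^n\indiq_{\{x>M\}}\leq C_{n,q}\,e^{-M^q}e^{C_q x^q}$ for all $x\geq 0$ and $M\geq 1$, which I will call the truncation-to-exponential estimate.

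For part (i), I would combine two bounds on $A_1^K$. The sharp one, coming from \eqref{ar1} with $\gamma=1$ together with $||v-v_*|-|\tv-\tv_*||\leq|v-\tv|+|v_*-\tv_*|$, reads $A_1^K\leq C(|v-v_*|\wedge|\tv-\tv_*|)(|v-\tv|^2+|v_*-\tv_*|^2)\leq C(|v|+|v_*|)(|v-\tv|^2+|v_*-\tv_*|^2)$. The crude one, estimating $(G(z/x)-G(z/\tx))^2$ by a constant on $[0,K]$, reads $A_1^K\leq CK|v-v_*||\tv-\tv_*|$. Splitting according to whether $C(|v|+|v_*|)\leq M$, the sharp bound yields $M(|v-\tv|^2+|v_*-\tv_*|^2)$ on the small set, while on the complement the crude bound combined with the truncation-to-exponential estimate applied to $|v|+|v_*|$ produces $C_qK(|\tv|+|\tv_*|)e^{-M^q}e^{C_q(|v|^q+|v_*|^q)}$. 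For part (iii), since $\Psi_K(x)\leq Cx\indiq_{\{x>2K/\pi\}}$, we have $A_3^K\leq C(|v-v_*|^3+|v-\tv||v-v_*|^2)\indiq_{\{|v-v_*|>2K/\pi\}}$, and the truncation-to-exponential estimate at threshold $\sim K$ absorbs the powers of $|v-v_*|$ into $C_q e^{-K^q}e^{C_q(|v|^q+|v_*|^q)}$; bounding $|v-\tv|\leq 1+|v|+|\tv|$ then yields the required $(1+|\tv|)$ factor (the $|z_*|^q$ inside the exponential appears to be a harmless misprint, since $z_*$ does not enter $A_3^K$).

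Part (ii) is the main technical step. The key is the rearrangement
\[
\Delta_2^K:=A_2^K(v,v_*,\tv,\tv_*)-A_2^K(v,z_*,\tv,\tv_*)=F(z_*)-F(v_*)+(v_*-z_*)\cdot P,
\]
where $F(y):=[(v-\tv)+(y-\tv_*)]\cdot(v-y)\Phi_K(|v-y|)$ and $P:=(\tv-\tv_*)\Phi_K(|\tv-\tv_*|)$, whose crucial feature is that the tilded variables never enter the argument of a $\Phi_K$ inside $F$. For $F(z_*)-F(v_*)$, the mean value theorem on the segment joining $v_*$ and $z_*$ together with the direct computation $|\nabla F(y)|\leq C|v-y|^2+C|y-\tv_*||v-y|$ (using $\Phi_K(x)\leq Cx$ and $|\Phi_K'(x)|\leq C$) yields a bound by $|v_*-z_*|$ times quantities involving only $v,v_*,z_*,\tv_*$, whose various pieces split via Young's inequality into the admissible $M(|v-\tv|^2+|v_*-\tv_*|^2)$ and $C|v_*-z_*|^2(1+|v|+|v_*|+|z_*|)^2$ terms. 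For the residual $(v_*-z_*)\cdot P$, using $|P|\leq\pi K|\tv-\tv_*|$ I would split on $\{|v_*|+|z_*|>M_*\}$ with $M_*$ proportional to $M$: on the large set, the truncation-to-exponential estimate applied to $|v_*-z_*|\leq|v_*|+|z_*|$ produces the $C_qK(1+|\tv|+|\tv_*|)e^{-M^q}e^{C_q(|v|^q+|v_*|^q+|z_*|^q)}$ term; on the complement, $|v_*-z_*|$ is bounded by $2M_*$ and $|\tv-\tv_*|\leq|v-\tv|+|v-v_*|+|v_*-\tv_*|$ (with $|v-v_*|\leq|v|+M_*$), which together with one further application of Young redistributes the contribution into the first two admissible terms.

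I expect the hardest point to be ensuring only \emph{linear} growth in $|\tv|+|\tv_*|$ in the exponentially-decaying term of part (ii), rather than the quadratic growth that a naive Lipschitz bound on the $\Phi_K$-vector difference would produce. The rearrangement above into $F$ and $P$ is precisely designed so that whenever $|\tv-\tv_*|$ appears explicitly (in the $P$-piece), it is paired only with the crude bound $\Phi_K\leq\pi K$; meanwhile, the $F$-piece involves Lipschitz estimates in the original variables only, so the dangerous cross-product $|\tv-\tv_*|^2$ never arises.
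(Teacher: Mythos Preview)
Your arguments for parts (i) and (iii) are correct and essentially coincide with the paper's proof.

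For part (ii), however, your $F/P$ decomposition has a genuine gap. First, the gradient bound is incomplete: differentiating the prefactor $[(v-\tv)+(y-\tv_*)]$ in $F$ produces the vector $(v-y)\Phi_K(|v-y|)$ itself, so that in fact
\[
|\nabla F(y)|\leq C|v-y|^2 + C\big(|v-\tv|+|y-\tv_*|\big)|v-y|,
\]
and the claim that only $v,v_*,z_*,\tv_*$ appear is false. More importantly, the term $|v_*-z_*|\cdot C|v-y|^2$ coming from the first contribution cannot be absorbed into any of the three admissible pieces: for $y$ on the segment $[v_*,z_*]$ it is of order $|v_*-z_*|(1+|v|+|v_*|+|z_*|)^2$, which contains a \emph{single} power of $|v_*-z_*|$, no factor $|v-\tv|^2+|v_*-\tv_*|^2$, and no exponential decay on the set where $|v|+|v_*|+|z_*|$ is bounded. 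Your handling of the $P$-piece has a parallel problem: on the ``small'' set $\{|v_*|+|z_*|\leq M_*\}$ you still carry the crude factor $\pi K$ from $|P|\leq \pi K|\tv-\tv_*|$, and the Young redistribution you describe then leaves a constant of order $K^2 M_*^2/M$, which is not uniformly bounded in $K$.

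The paper avoids both issues by \emph{not} separating the $(v_*-z_*)\cdot(v-z_*)\Phi_K(|v-z_*|)$ piece from your $P$-term. It rewrites $\Delta_2^K$ as in \eqref{topcool1} with a fixed prefactor $[(v-\tv)+(v_*-\tv_*)]$ on the first line, and then applies the Lipschitz estimate \eqref{fi} to \emph{both} lines simultaneously for the sharp bound. The crucial gain is that on the second line, \eqref{fi} combines $(v-z_*)\Phi_K(|v-z_*|)$ with $(\tv-\tv_*)\Phi_K(|\tv-\tv_*|)$ into a single Lipschitz difference, so the dangerous standalone term $|v_*-z_*||v-z_*|^2$ never appears; instead one is left with $C(1+|z_*-v_*|)(|v-\tv|^2+|v_*-\tv_*|^2)$. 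Splitting on $\{C(1+|z_*-v_*|)\geq M\}$, the small side gives $M(|v-\tv|^2+|v_*-\tv_*|^2)$ directly, while on the large side the fully crude bound $\Phi_K\leq \pi K$ (applied to the whole of \eqref{topcool1}) yields $CK(1+|\tv|+|\tv_*|)(1+|v|^2+|v_*|^2+|z_*|^2)$, which the truncation-to-exponential trick converts into the desired third term.
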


\begin{proof}
On the one hand, \eqref{c3} implies
\begin{align*}
A_1^K(v,v_*,\tv,\tv_*)\leq& 2c_4 |v-v_*||\tv-\tv_*|\frac{(|v-v_*|-|\tv-\tv_*|)^2}{ |v-v_*|+ |\tv-\tv_*| }\\
\leq& 4c_4 (|v-v_*|\land|\tv-\tv_*|) (|v-\tv|^2+ |v_*-\tv_*|^2).
\end{align*}
On the other hand, since $G$ takes values in $(0,\pi/2)$, we obviously have
\begin{align*}
A_1^K(v,v_*,\tv,\tv_*)\leq& \frac{\pi^2}2 K |v-v_*||\tv-\tv_*|.
\end{align*}
Consequently, we may write
\begin{align*}
A_1^K(v,v_*,\tv,\tv_*)\leq M (|v-\tv|^2+ |v_*-\tv_*|^2) + \frac{\pi^2}2 K |v-v_*||\tv-\tv_*|
\indiq_{\{ 4c_4 (|v-v_*|\land|\tv-\tv_*|) \geq M  \}}.
\end{align*}
Point (i) easily follows, using that $|v-v_*|\indiq_{\{ 4c_4 (|v-v_*|\land|\tv-\tv_*|) \geq M  \}}
\leq |v-v_*|\indiq_{\{ 4c_4 |v-v_*| \geq M  \}} \leq |v-v_*| e^{-M^q}e^{ (4c_4 |v-v_*|)^q} \leq
C_q e^{-M^q}e^{ 2(4c_4 |v-v_*|)^q}\leq C_q e^{-M^q}e^{ 2^{q+1}(4c_4)^q (|v|^q+|v_*|^q)}$.

\vip

Using all the computations of the proof of Lemma \ref{further}-(ii) except the one that makes
appear the power $2/(1-\gamma)$, we see that
for $\Delta_2^K:=A_2^K(v,v_*,\tv,\tv_*)-A_2^K(v,z_*,\tv,\tv_*)$
\begin{align*}
\Delta_2^K
\leq& C [|v-\tv|^2 + |v_*-\tv_*|^2 + |v_*-z_*|^2(1+|v|+|v_*|+|z_*|)^2  +  |z_*-v_*|(|v-\tv|^2 + |v_*-\tv_*|^2)]
\\
\leq & C(1+|z_*-v_*|)(|v-\tv|^2 + |v_*-\tv_*|^2) +  C|v_*-z_*|^2(1+|v|+|v_*|+|z_*|)^2 .
\end{align*}
On the other hand, starting from \eqref{topcool1} and using that $\phi_K(x)\leq \pi K$, we realize that
\begin{align*}
\Delta_2^K \leq C K (1+|\tv|+|\tv_*|) (1+|v|^2+|v_*|^2+|z_*|^2).
\end{align*}
Hence we can write, for any $M>1$,
\begin{align*}
\Delta_2^K
\leq & M (|v-\tv|^2 + |v_*-\tv_*|^2) +  C |v_*-z_*|^2(1+|v|+|v_*|+|z_*|)^2 \\
&+C K (1+|\tv|+|\tv_*|) (1+|v|^2+|v_*|^2+|z_*|^2)\indiq_{\{ C(1+|z_*-v_*|) \geq M\}}.
\end{align*}
But $(1+|v|^2+|v_*|^2+|z_*|^2)\indiq_{\{ C(1+|z_*-v_*|) \geq M\}}
\leq (1+|v|+|v_*|+|z_*|)^2\indiq_{\{ C(1+|v|+|v_*|+|z_*|) \geq M\}} \leq 
(1+|v|+|v_*|+|z_*|)^2 e^{- M^q } e^{ C^q(1+|v|+|v_*|+|z_*|)^q } \leq C_q e^{- M^q } e^{ C_q(|v|^q+|v_*|^q+|z_*|^q)}$.
Point (ii) is checked.

\vip

Finally, we observe that $\Psi_K(x)\leq \pi \int_K^\infty G^2(z/x) dz$. But here,
$G(z)=(\pi/2-z)_+$ whence
$\Psi_K(x)\leq (\pi^4/24)x \indiq_{\{x \geq 2K/\pi\}} \leq 5x \indiq_{\{x \geq K/2\}}$.
Thus for any $q>0$, $\Psi_K(x) \leq 5x e^{-K^q}e^{2^q x^q}$, so that
\begin{align*}
A^K_3(v,v_*,\tv,\tv_*)\leq& C (1+|\tv|)(1+|v|^2+|v_*|^2)e^{-K^q}|v-v_*|e^{2^q |v-v_*|^q}\\
\leq& C_q (1+|\tv|) e^{-K^q} e^{C_q (|v|^q+|v_*|^q)}
\end{align*}
as desired.
\end{proof}

\section{Convergence of the particle system with cutoff}
\setcounter{equation}{0} 
\label{sec:ConvCutoff}

To build a suitable coupling between the particle system and the solution to \eqref{be},
we need to introduce the (stochastic) paths associated to \eqref{be}.
To do so, we follow the ideas of Tanaka \cite{T1978,T1979} and make use of two probability spaces.
The main one is an abstract $(\Omega,\cF,\Pr)$, on which the random objects are defined 
when nothing is precised. But we will also need an auxiliary one, 
$[0,1]$ endowed with its Borel $\sigma$-field 
and its Lebesgue measure. 
In order to avoid confusion, a random variable defined on this latter probability space will be called an
$\alpha$-random variable, expectation on $[0,1]$ will be denoted by $\E_\alpha$, etc.

\subsection{A SDE for the Boltzmann equation}

First, we recall the classical probabilistic interpretation of the Boltzmann equation
initiated by Tanaka \cite{T1978,T1979} in the Maxwell molecules case. 

\begin{prop}\label{wt}
Assume \eqref{cs}, \eqref{c1}, \eqref{c2hs} or \eqref{c2} and let $f_0 \in \cP_2(\rd)$.
If $\gamma\in (0,1]$, assume additionally \eqref{c4} and
that $f_0$ satisfies \eqref{c5}.
Let $(f_t)_{t\geq 0}$ be the corresponding unique weak solution to \eqref{be}.
Consider any $f_0$-distributed random variable $W_0$ and any independent Poisson measure
$M(ds,d\alpha,dz,d\varphi)$ on $[0,\infty)\times [0,1]\times[0,\infty)\times [0,2\pi)$ with intensity measure
$ds d\alpha dz d\varphi$. Consider also, for each $t\geq 0$, a $f_t$-distributed $\alpha$-random 
variable $W_t^*$, in such a way that $(t,\alpha)\mapsto W^*_t(\alpha)$ is measurable.
Then there is a unique (c\`adl\`ag adapted) strong solution to
\begin{align}\label{sde}
W_t=W_0 + \intot\int_\rd\int_0^\infty\int_0^{2\pi} c(W_\sm,W_s^*(\alpha),z,\varphi) M(ds,d\alpha,dz,d\varphi).
\end{align}
Furthermore, $W_t$ is $f_t$-distributed for each $t\geq 0$.
\end{prop}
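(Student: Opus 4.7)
The plan is to treat \eqref{sde} as a standard jump SDE driven by the Poisson measure $M$, where the coefficient is the \emph{deterministic} function $(s,\alpha,z,\varphi)\mapsto c(W_{s-},W^*_s(\alpha),z,\varphi)$ with $W^*_s(\alpha)$ given. The cornerstone inequality is
\begin{equation*}
\int_0^\infty\!\!\int_0^{2\pi}|c(v,v_*,z,\varphi)|^2\,d\varphi\,dz
= \pi |v-v_*|^2\!\int_0^\infty\!(1-\cos G(z/|v-v_*|^\gamma))\,dz
\le C|v-v_*|^{2+\gamma},
\end{equation*}
using $1-\cos G(u)\le G^2(u)\le C(1+u)^{-2/\nu}$ from \eqref{eG} and $2/\nu>1$. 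This shows that the compensated jump integral is well defined as soon as $\E|W_s|^{2+\gamma}$ and $\intrd|v|^{2+\gamma}f_s(dv)$ are locally bounded, which holds by Theorem \ref{wp} and the moment propagation we are about to set up.

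\textbf{Existence via truncation and a priori moments.} I would introduce the truncated coefficient $c_n:=c\,\indiq_{\{|v-v_*|\le n\}}$, for which the compensator is a finite measure on bounded sets, so that a unique c\`adl\`ag strong solution $W^n$ is obtained by the classical interlacing of jumps. A priori moment bounds are then needed uniformly in $n$: applying It\^o to $|W^n_t|^2$, taking expectation, averaging over $\alpha$ (so that $W^*_s(\alpha)$ is replaced by an $f_s$-distributed variable) and using the conservation of energy for $f_t$, one controls $\sup_{[0,T]}\E|W^n_t|^2$. For $\gamma\in(0,1]$, combining \eqref{c5}, the convexity property \eqref{c4} and the propagation of exponential moments stated in \eqref{momex} (applied to $f_t$, which serves as the law of $W^*_s(\alpha)$), one similarly obtains $\sup_{[0,T]}\E\,e^{|W^n_t|^q}<\infty$ uniformly in $n$. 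These uniform bounds, together with the $L^2$ jump-size estimate above, give tightness of $(W^n)$ in the Skorokhod space and allow the passage $n\to\infty$ to yield a solution of \eqref{sde}.

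\textbf{Pathwise uniqueness.} For two solutions $W,\tilde W$ driven by the same $W_0$, $W^*$ and $M$, applying It\^o to $|W_t-\tilde W_t|^2$ and taking expectation, the atom $(s,\alpha,z,\varphi)$ of $M$ contributes
\begin{equation*}
\bigl|W_{s-}+c(W_{s-},W^*_s(\alpha),z,\varphi)-\tilde W_{s-}-c(\tilde W_{s-},W^*_s(\alpha),z,\varphi)\bigr|^2
-|W_{s-}-\tilde W_{s-}|^2.
\end{equation*}
After integration in $(z,\varphi)$, this is a particular case of Lemma \ref{fundest} with $\tv_*=v_*$ and $K=\infty$, in which $A_1$ and $A_3$ vanish and only $A_2$ survives. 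Averaging in $\alpha$ (so $v_*=W^*_s(\alpha)$ is $f_s$-distributed) and bounding $A_2$ by Lemma~\ref{furthermax}(ii) for $\gamma=0$ or by Lemma~\ref{further}(ii) with $z_*=v_*$ for $\gamma\in(0,1)$ (and Lemma~\ref{furtherhs}(ii) for $\gamma=1$), one obtains a Gronwall inequality of the form $\E|W_t-\tilde W_t|^2\le C\intot(1+\psi(s))\E|W_s-\tilde W_s|^2\,ds$, where $\psi(s)$ involves only moments of $W,\tilde W,f_s$ already shown to be locally bounded. This forces $W=\tilde W$.

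\textbf{Identification of the marginal.} Finally I would apply It\^o to $\phi(W_t)$ for $\phi\in C^1_b(\rd)$, take expectation, use Fubini on the Poisson intensity and the fact that $W^*_s(\alpha)$ is $f_s$-distributed in $\alpha$ to rewrite the integrand via \eqref{agood}, getting
\begin{equation*}
\E\phi(W_t)=\E\phi(W_0)+\intot\intrd\E\bigl[\cA\phi(W_s,v_*)\bigr]f_s(dv_*)\,ds.
\end{equation*}
Denoting $g_s:=\mathrm{Law}(W_s)$, this shows $(g_s)$ is a weak solution of the \emph{linear} equation $\partial_s g=Q(g,f)$ with $g_0=f_0$. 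Since $f$ itself solves this linear equation (the nonlinear \eqref{wbe} being identically such a linear equation once $f_s$ is frozen on the right), uniqueness of this linear equation, obtained by the very Gronwall argument of the previous paragraph applied at the level of marginals, gives $g_t=f_t$. The main obstacle is the hard-potentials case: the particle $W$ inherits only an $L^2$ bound a priori, so one cannot symmetrize $v$ and $\tv$ in the estimates; this is precisely why Lemma \ref{further} is stated asymmetrically, with exponential weights only on the $v,v_*$ variables that correspond to $W^*_s(\alpha)$.
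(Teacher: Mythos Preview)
The paper itself omits this proof entirely, pointing to \cite[Proposition~5.1]{F2012} and \cite[Section~4]{FGu2008} (where even the SDE is formulated differently). So there is nothing in the paper to compare against; the question is whether your sketch stands on its own. Two steps do not.

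First, the truncation $c_n=c\,\indiq_{\{|v-v_*|\le n\}}$ does not yield finitely many jumps: the Poisson measure $M$ has infinitely many atoms in $[0,T]\times[0,1]\times[0,\infty)\times[0,2\pi)$, and at essentially every one of them $c_n$ is nonzero (whenever $0<|W^n_{s-}-W^*_s(\alpha)|\le n$). The ``interlacing of jumps'' construction is therefore unavailable with this cutoff. The natural truncation is in $z$, namely $c_K=c\,\indiq_{\{z\le K\}}$, exactly as the paper uses for the particle system in Proposition~\ref{wpst}.

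Second, and this is the substantive gap, your pathwise-uniqueness computation is \emph{not} an instance of Lemma~\ref{fundest}. That lemma bounds
\[
\int_0^\infty\!\!\int_0^{2\pi}\Big(\big|v+c(v,v_*,z,\varphi)-\tv-c_K(\tv,\tv_*,z,\varphi+\varphi_0)\big|^2-|v-\tv|^2\Big)\,d\varphi\,dz
\]
with the shift $\varphi_0=\varphi_0(v-v_*,\tv-\tv_*)$ of Lemma~\ref{tanana}. When $W$ and $\tilde W$ are driven by the \emph{same} Poisson measure, each atom $(s,\alpha,z,\varphi)$ enters both processes with the \emph{same} $\varphi$; the quantity you must control is the unshifted one, for which the cross term $\int_0^{2\pi}\Gamma(v-v_*,\varphi)\cdot\Gamma(\tv-v_*,\varphi)\,d\varphi=\pi[I(v-v_*)\cdot I(\tv-v_*)+J(v-v_*)\cdot J(\tv-v_*)]$ depends on the discontinuous choice of $I,J$ and has no useful lower bound in terms of $|v-\tv|$. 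Tanaka's trick is precisely what repairs this, but it requires the freedom to rotate $\varphi$ in one of the two processes --- a freedom you do not have once both read the same noise. (Your side claim that $A_1$ vanishes when $\tv_*=v_*$ is also wrong for $\gamma>0$; only $A_3^\infty$ vanishes. This is minor since $A_1$ can still be bounded.) The arguments in \cite{F2012,FGu2008} circumvent this by passing through the $z$-cutoff SDE, where strong uniqueness is trivial, and then comparing with approximations for which the $\varphi_0$-shift \emph{can} be inserted because those approximations are constructed from scratch and a predictable rotation of $\varphi$ leaves their law unchanged.
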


We will note $(W_t)_{t\geq 0}$ such a Boltzmann process. It can be viewed as the time-evolution of the velocity
of a typical particle in the gas.

\begin{proof}
The proof is very similar to that of \cite[Proposition 5.1]{F2012}, see also
\cite[Section 4]{FGu2008} and is omitted. In \cite[Proposition 5.1]{F2012}, the same Boltzmann 
equation is studied, with much less assumptions on $f_0$ (so that uniqueness is not known for \eqref{be}).
But the formulation of the SDE is different (it is equivalent in law). 
The same proof as in \cite[Proposition 5.1]{F2012} works here,
with several difficulties avoided due to the facts that $f_0$ has exponential moments and
that uniqueness is known to hold for \eqref{be}. 
\end{proof}

\subsection{A SDE for the particle system}

Here we write down a Poisson stochastic differential equation corresponding to Nanbu's particle system
and we prove Proposition \ref{wps}-(i).

\begin{prop}\label{wpst}
Assume \eqref{cs}, \eqref{c1}, \eqref{c2hs} or \eqref{c2} 
and let $f_0 \in \cP_2(\rd)$, $N\geq 1$ and $K\in [1,\infty)$.
Consider a family $(V^i_0)_{i=1,\dots N}$ of i.i.d. $f_0$-distributed random variables and an independent
family  $(O^N_i(ds,dj,dz,d\varphi))_{i=1,\dots,N}$ of 
Poisson measures on
$[0,\infty)\times \{1,\dots,N\} \times[0,\infty)\times [0,2\pi)$ with intensity measures
$ds \left(N^{-1}\sum_{k=1}^N \delta_k(dj)\right) dz d\varphi$.
There exists a unique (c\`adl\`ag and adapted) strong solution to
\begin{align}\label{pssde}
V^{i,N,K}_t=V^i_0 + \intot\int_j\int_0^\infty\int_0^{2\pi} c_K(V^{i,N,K}_\sm,V^{j,N,K}_\sm,z,\varphi) 
O^N_i(ds,dj,dz,d\varphi),
\quad i=1,\dots,N.
\end{align}
Furthermore, $(V^{i,N,K}_t)_{i=1,\dots,N, t\geq 0}$ is Markov with generator 
$\cL_{N,K}$. We have $\E\left[|V^{1,N,K}_t|^2\right]= \intrd|v|^2f_0(dv)$ and, if 
$ \intrd|v|^pf_0(dv)$ for some $p\geq 2$, $\sup_{[0,T]}\E\left[|V^{1,N,K}_t|^p\right] \leq C_{p,T,f_0,K}$.
\end{prop}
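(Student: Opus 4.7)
My plan is threefold: construct the solution pathwise thanks to the cutoff, read off the generator via It\^o's formula, and derive the moment bounds by direct computation.

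For existence and uniqueness, observe that for $K<\infty$ the intensity of each $O^N_i$ restricted to $\{z\le K\}\times[0,2\pi)$ has finite total mass $2\pi K$ per unit time (after integration of the probability measure $N^{-1}\sum_k \delta_k$ in $j$ and of Lebesgue measure on $[0,K]\times[0,2\pi)$), so the superposition $\sum_{i=1}^N O^N_i|_{\{z\le K\}}$ is a Poisson point process of finite rate $2\pi NK$. I would order its atoms $(s_\ell,i_\ell,j_\ell,z_\ell,\varphi_\ell)$ in increasing time, leave the trajectories constant between atoms, and at the $\ell$-th atom update only the $i_\ell$-th coordinate by $V^{i_\ell,N,K}_{s_\ell}=V^{i_\ell,N,K}_{s_\ell-}+c_K(V^{i_\ell,N,K}_{s_\ell-},V^{j_\ell,N,K}_{s_\ell-},z_\ell,\varphi_\ell)$. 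This yields simultaneously existence and pathwise uniqueness of an adapted c\`adl\`ag solution to \eqref{pssde}. Applying It\^o's formula for Poisson integrals to $\phi(\bV^{N,K}_t)$ for bounded measurable $\phi:(\rd)^N\to\rr$ and compensating the $N$ independent driving measures, one gets that $\phi(\bV^{N,K}_t)-\phi(\bV^{N,K}_0)-\int_0^t \cL_{N,K}\phi(\bV^{N,K}_s)\,ds$ is a martingale, so the process is Markov with generator $\cL_{N,K}$ and uniqueness in law follows.

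The exact second moment identity is the main delicate point. Apply It\^o to $|V^{i,N,K}_t|^2$, take expectation and sum over $i$. Using the elementary $\varphi$-integrals computed in the proof of Lemma~\ref{fundest} (the same calculation that produced $\Phi_K(x)$ there), the drift contribution from a pair $(i,j)$ reduces to
\begin{equation*}
\int_0^K \pi\bigl(1-\cos G(z/|V^i_s-V^j_s|^\gamma)\bigr)\bigl(|V^j_s|^2-|V^i_s|^2\bigr)\,dz.
\end{equation*}
Since $1-\cos G(z/|V^i-V^j|^\gamma)$ is symmetric in $(i,j)$ while $|V^j|^2-|V^i|^2$ is antisymmetric, the double sum over $i\ne j$ vanishes. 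Combined with exchangeability, inherited from the i.i.d.\ initial data and the fact that the $O^N_i$ are i.i.d.\ and the SDE is permutation-covariant, this yields the exact conservation $\E[|V^{1,N,K}_t|^2]=\E[|V^1_0|^2]$. The subtlety is precisely that Nanbu's non-symmetric dynamics does not conserve energy pathwise (only one particle changes at each collision); it is only the mean total energy that is preserved, and this requires the $(i,j)$-symmetric cancellation just described.

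For $p>2$, I use the crude bounds $|c_K(v,v_*,z,\varphi)|\le |v-v_*|\indiq_{\{z\le K\}}$ and $|v+c|^p-|v|^p\le C_p(|v|^{p-1}|c|+|c|^p)$. Integrating against the intensity $ds\,(N^{-1}\sum_k\delta_k)(dj)\,dz\,d\varphi$, taking expectation and using exchangeability produces the differential inequality $\frac{d}{dt}\E[|V^{1,N,K}_t|^p]\le C_{p,K}\bigl(1+\E[|V^{1,N,K}_t|^p]\bigr)$, and Gronwall's lemma gives the announced $K$-dependent bound $\sup_{[0,T]}\E[|V^{1,N,K}_t|^p]\le C_{p,T,f_0,K}$.
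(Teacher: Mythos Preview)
Your proof is correct and follows essentially the same approach as the paper: pathwise construction between the finitely many jump times of the cutoff Poisson measures, identification of $\cL_{N,K}$ via It\^o's formula, the crude bound $|c_K|\le |v-v_*|\indiq_{\{z\le K\}}$ plus Gronwall for $p>2$, and for $p=2$ the exact computation $\int_0^K\int_0^{2\pi}(|c|^2+2v\cdot c)\,d\varphi\,dz=(|v_*|^2-|v|^2)\Phi_K(|v-v_*|)$ followed by a symmetry cancellation. The only cosmetic difference is that you sum over all $i$ and invoke the $(i,j)$-antisymmetry of $(|V^j|^2-|V^i|^2)\Phi_K(|V^i-V^j|)$ to kill the total drift before using exchangeability, whereas the paper first reduces by exchangeability to the pair $(1,2)$ and then symmetrizes; these are the same argument.
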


\begin{proof}
First of all, observe that we actually deal with finite Poisson measures, since $c_K$ vanishes
for $z\geq K$. Thus, strong existence and uniqueness for \eqref{pssde} is trivial: it suffices
to work recursively on the instants of jumps (which are discrete) of the family 
$(O^N_i(ds,dj,dz,d\varphi))_{i=1,\dots,N}$. Consequently, $\bV_t^{N,K}=(V^{1,N,K}_t,\dots,V^{N,N,K}_t)$
is a Markov process, since it solves a well-posed time-homogeneous SDE. Its infinitesimal generator
is classically defined by \eqref{lKgood}, with actually a sum over all couples $(i,j)\in\{1,\dots,N\}^2$,
but this changes nothing since the terms with $i=j$ vanish because
$c_K(v,v,z,\varphi)=0$ for all $v\in\rd$. Next, a simple computation shows that
\begin{align*}
\E[|V^{1,N,K}_t|^2]=&\E[|V^1_0|^2] + \frac 1 N \sum_{j=1}^N \intot\int_0^\infty\int_0^{2\pi} 
\E\Big(|V^{1,N,K}_s+c_K(V^{1,N,K}_s,V^{j,N,K}_s,z,\varphi)|^2 \\
&\hskip8cm- |V^{1,N,K}_s|^2\Big) d\varphi dz ds\\
=&\E[|V^1_0|^2] + \frac{N-1}{N} \intot\int_0^K \int_0^{2\pi} 
\E\Big(|c(V^{1,N,K}_s,V^{2,N,K}_s,z,\varphi)|^2\\
&\hskip4cm +2 V^{1,N,K}_s\cdot c(V^{1,N,K}_s,V^{2,N,K}_s,z,\varphi)  
\Big) d\varphi dz ds
\end{align*}
by exchangeability. But, as seen in the proof of Lemma \ref{fundest},
$$
\int_0^K \int_0^{2\pi} \Big(|c(v,v_*,z,\varphi)|^2 + 2v\cdot c(v,v_*,z,\varphi)\Big) d\varphi dz =
[|v-v_*|^2 -2v\cdot (v-v_*)]\Phi_K(|v-v_*|),
$$
whence, using again exchangeability,
\begin{align*}
\E[|V^{1,N,K}_t|^2]= &\E[|V^1_0|^2] + \frac{N-1}{N} \intot
\E\Big(\Big[|V^{1,N,K}_s-V^{2,N,K}_s|^2 -2V^{1,N,K}_s\cdot (V^{1,N,K}_s-V^{2,N,K}_s)\Big]\\
&\hskip7cm \Phi_K(|V^{1,N,K}_s-V^{2,N,K}_s|)\Big) ds\\
=&\E[|V^1_0|^2] + \frac{N-1}{N} \intot
\E\Big(\Big[|V^{1,N,K}_s-V^{2,N,K}_s|^2 -V^{1,N,K}_s\cdot (V^{1,N,K}_s-V^{2,N,K}_s)\\
&\hskip3cm  -V^{2,N,K}_s\cdot (V^{2,N,K}_s-V^{1,N,K}_s) \Big]  \Phi_K(|V^{1,N,K}_s-V^{2,N,K}_s|)\Big) ds.
\end{align*}
In this last expression, the integrand is zero, so that, as claimed, $\E[|V^{1,N,K}_t|^2]= \E[|V^1_0|^2] 
=\intrd |v|^2f_0(dv)$. Recalling finally \eqref{dfvprime} and \eqref{dfc}, we see that
$|c(v,v_*,z,\varphi)|\leq |v-v_*|$. Thus for $p\geq 2$,
\begin{align*}
\int_0^K \int_0^{2\pi} (|v+c(v,v_*,z,\varphi)|^p-|v|^p) d\varphi dz \leq C_{p} K (|v|+|v_*|^p).
\end{align*}
Consequently, we obtain as previously
$$
\E[|V^{1,N,K}_t|^p]\leq \E[|V^1_0|^p] +  \frac {C_pK} N \sum_{j=1}^N \intot \E[|V^{1,N,K}_s|^p+|V^{j,N,K}_s|^p ]ds
$$
and conclude, using again exchangeability, that $\E[|V^{1,N,K}_t|^p]\leq \E[|V^1_0|^p]e^{2C_pKt}$ as desired.
\end{proof}

This allows us to deduce

\begin{preuve} {\it of Proposition \ref{wps}-(i).} The strong existence and uniqueness
for the SDE \eqref{pssde} 
classically implies the existence and uniqueness of a Markov
process with generator $\cL_{N,K}$.
\end{preuve}

\subsection{The coupling}

Here we explain how we couple our particle system with a family of i.i.d. Boltzmann processes. 
For example, we want to couple $V^{1,N,K}_t$ with a Boltzmann process $W^1_t$. 
The main difficulty is that at each collision, 
$W^1_t$ is collided by an independent particle (using $W^*_t$) while
$V^{1,N,K}_t$ is collided by some $V^{j,N,K}_t$. We thus have to choose $j$ in such a way that
$V^{j,N,K}_t$ is as close as possible to $W^*_t$, but $j$ has to remain uniformly chosen.

\vip

A technical problem obliges us to introduce the set $(\rd)^N_\bullet := \{\bw\in(\rd)^N : \; 
w_i\ne w_j \;\forall \; i \ne j\}$.

\begin{lem}\label{super}
Let $f_t \in C([0,\infty),\cP_2(\rd))$ be such that $f_t$ has a density for all $t>0$. Let
also $N\geq 1$ be fixed. 
For $\bv=(v_1,\dots,v_N) \in (\rd)^N$, we denote by $\mu^N_{\bv}:=N^{-1}\sum_1^N \delta_{v_i}$ the empirical 
measure associated to $\bv$. There exists a measurable map 
$(t,\bw,\bv,\alpha)\mapsto (W_t^*(\alpha),Z_t^*(\bw,\alpha),V_t^*(\bv,\bw,\alpha))$ from
$(0,\infty)\times (\rd)^N_\bullet \times (\rd)^N\times[0,1]$ into $\rd\times\rd\times\rd$ enjoying the following
properties

(a) for all $t\geq 0$, the $\alpha$-law of $W^*_t$ is $f_t$,

(b) for all $t\geq 0$, $\bw\in (\rd)^N_\bullet$, the $\alpha$-law of $Z_t^*(\bw,.)$ is $\mu^N_\bw$,

(c) for all $t\geq 0$, $\bw\in (\rd)^N_\bullet$, $\bv\in(\rd)^N$, the $\alpha$-law of
$V_t^*(\bv,\bw,.)$ is $\mu^N_\bv$,

(d) for all $t\geq 0$, $\bw\in (\rd)^N_\bullet$, $\bv\in (\rd)^N$,  the $\alpha$-law of
$(Z^*_t(\bw,.),V_t^*(\bv,\bw,.))$
is $N^{-1}\sum_1^N \delta_{(w_i,v_i)}$,

(e) for all $t\geq 0$, all $\bw\in (\rd)^N_\bullet$, 
$\int_0^1 |W_t^*(\alpha)- Z^*_t(\bw,\alpha) |^2 d\alpha = \cW_2^2(f_t, \mu^N_\bw)$.
\end{lem}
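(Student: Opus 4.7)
The plan is to construct $(W_t^*, Z_t^*)$ as the Brenier optimal coupling between $f_t$ and $\mu^N_\bw$, and then to obtain $V_t^*$ by simply reading off which atom $w_i$ the point $Z_t^*$ has landed on. All five properties (a)--(e) will follow from the basic properties of this coupling, once joint measurability in $(t,\bw,\bv,\alpha)$ has been taken care of.

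First I would fix, once and for all, a jointly measurable map $(t,\alpha)\mapsto W^*_t(\alpha)$ with $\alpha$-law $f_t$, which exists by a standard measurable version of the inverse CDF trick applied to the weakly continuous family $t\mapsto f_t$. This gives (a). Next, for each $(t,\bw)$ with $t>0$ and $\bw\in(\rd)^N_\bullet$, the hypothesis that $f_t$ has a density combined with the fact that $\mu^N_\bw$ is a sum of $N$ distinct Dirac masses puts us in the Brenier framework: there is a unique (up to $f_t$-null sets) optimal transport map $T_t^\bw:\rd\to\{w_1,\dots,w_N\}$ pushing $f_t$ onto $\mu^N_\bw$, realizing $\cW_2^2(f_t,\mu^N_\bw)=\int_{\rd}|v-T_t^\bw(v)|^2 f_t(dv)$. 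Concretely $T_t^\bw$ is constant equal to $w_i$ on the $i$-th Laguerre cell
\[
A_i^{t,\bw}=\{v\in\rd : |v-w_i|^2-\lambda_i^{t,\bw}\leq |v-w_j|^2-\lambda_j^{t,\bw} \ \forall j\},
\]
where the weight vector $(\lambda_i^{t,\bw})_{i=1}^N$, normalized by $\sum_i\lambda_i=0$, is the unique critical point of the strictly concave dual Kantorovich functional $(\lambda_i)\mapsto \sum_i \lambda_i/N + \int_{\rd}\min_j(|v-w_j|^2-\lambda_j)\,f_t(dv)$. Because this functional depends measurably on $(t,\bw)$, its unique maximizer does too (Kuratowski--Ryll-Nardzewski or implicit-function-type measurable selection), hence $(t,\bw,v)\mapsto T_t^\bw(v)$ is Borel. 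Setting
\[
Z_t^*(\bw,\alpha):=T_t^\bw\bigl(W_t^*(\alpha)\bigr)
\]
then yields (b) since $Z_t^*(\bw,\cdot)\sim T_t^\bw\#f_t=\mu^N_\bw$ and (e) since the pair $(W_t^*,Z_t^*(\bw,\cdot))$ realizes the Brenier optimal coupling.

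Finally, since $w_1,\dots,w_N$ are pairwise distinct when $\bw\in(\rd)^N_\bullet$, there is a well-defined measurable index function $i(\bw,\alpha)\in\{1,\dots,N\}$ characterized by $Z_t^*(\bw,\alpha)=w_{i(\bw,\alpha)}$, and I define
\[
V_t^*(\bv,\bw,\alpha):=v_{i(\bw,\alpha)}.
\]
Property (d) is then transparent: for each $i$,
\[
\Pr_\alpha\bigl(Z_t^*(\bw,\cdot)=w_i,\ V_t^*(\bv,\bw,\cdot)=v_i\bigr)=\Pr_\alpha\bigl(W_t^*\in A_i^{t,\bw}\bigr)=f_t(A_i^{t,\bw})=\frac{1}{N},
\]
so the joint $\alpha$-law is $N^{-1}\sum_{i=1}^N\delta_{(w_i,v_i)}$, and (b), (c) follow as its marginals. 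Joint measurability in $(t,\bw,\bv,\alpha)$ follows from that of $T_t^\bw$ and $W_t^*$.

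The main obstacle, and really the only non-routine point, is the measurability in $(t,\bw)$ of the Brenier map $T_t^\bw$, which in the end reduces to the measurable dependence of the Laguerre weights $(\lambda_i^{t,\bw})$ on $(t,\bw)$. This is a standard consequence of the uniqueness and strict concavity of the dual Kantorovich problem combined with a measurable selection theorem, but it is the step where some care is required; everything else is either Brenier's theorem or a direct verification.
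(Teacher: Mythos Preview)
Your proof is correct and follows essentially the same route as the paper: build $W_t^*$ measurably, push it through the Brenier map to get $Z_t^*$, then read off the index to define $V_t^*$. The only substantive difference is in justifying the measurability of the Brenier map $(t,\bw,v)\mapsto T_t^\bw(v)$: the paper simply invokes the result of Fontbona--Gu\'erin--M\'el\'eard, while you sketch a direct argument via the Laguerre weights and measurable selection of the unique maximizer of the (strictly concave on the normalized hyperplane) dual functional. Your argument is more self-contained and in fact more informative, since it makes the semi-discrete structure explicit; the paper's citation is shorter but outsources the work. One minor slip: your index function should carry a $t$-dependence, i.e.\ $i=i(t,\bw,\alpha)$, since $Z_t^*$ does.
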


\begin{proof}
We first consider, for each $t> 0$, $W_t^*$ such that point (a) holds true and such that
$(t,\alpha)\mapsto W_t^*(\alpha)$ is measurable.

Next, we recall that by Brenier's theorem (see e.g. Villani \cite[Theorem 2.12 p 66]{Vtot2003})
for each $t>0$ and each $\bw\in(\rd)^N$, since $f_t$ does does not
charge small sets (because it has a density by \cite{F2012}), there 
exists a unique map $F_{t,\bw}:\rd\mapsto\rd$
such that, setting $Z_t^*(\bw,\alpha):=F_{t,\bw}(W_t^*(\alpha))$, points (b) and (e) hold true.
In other words, $(W_t^*(.),Z_t^*(\bw,.))$ is an optimal coupling for $f_t$ and $\mu^N_{\bw}$.
Furthermore, Fontbona-Gu\'erin-M\'el\'eard \cite{FGM} have shown that $F_{t,\bw}(x)$ is a measurable
function of $(t,\bw,x)$. Consequently, $Z_t^*(\bw,\alpha)$ is a measurable function of $(t,\bw,\alpha)$.

Finally, we define, for any $\bw \in (\rd)^N_\bullet$ and any $\bv\in(\rd)^N$, 
the map $G_{\bw,\bv}:\{w_1,\dots,w_N\}\mapsto \{v_1,\dots,v_N\}$ by $G_{\bw,\bv}(w_i)=v_i$
(here we need that $\bw \in (\rd)^N_\bullet$). We then we put 
$V_t^*(\bv,\bw,\alpha)=G_{\bw,\bv}(Z_t^*(\bw,\alpha))$,
which is clearly measurable (in all its variables). Point (d) follows from (b) and the definition
of $G_{\bw,\bv}$ and finally (c) follows from (d).
\end{proof}

Here is the coupling we propose.

\begin{lem}\label{coupling}
Assume \eqref{cs}, \eqref{c1}, \eqref{c2hs} or \eqref{c2}. Let $f_0 \in \cP_2(\rd)$.
Assume additionally \eqref{c4} and \eqref{c5} if $\gamma\in (0,1]$.
Let $(f_t)_{t\geq 0}$ be the unique weak solution to \eqref{be} and assume
that $f_t$ has a density for all $t>0$ (see Theorem \ref{wp}).
Consider $N\geq 1$ and $K\in [1,\infty)$ fixed. Let $(V_0^i)_{i=1,\dots N}$
be i.i.d. with common law $f_0$ and let $(M_i(ds,d\alpha,dz,d\varphi))_{i=1,\dots,N}$ be an i.i.d. family of 
Poisson measures on $[0,\infty)\times [0,1]\times[0,\infty)\times [0,2\pi)$ with intensity measures
$ds d\alpha dz d\varphi$, independent of $(V_0^i)_{i=1,\dots N}$.

(i) The following SDE's, for $i=1,\dots,N$, define $N$ independent copies of the Boltzmann process:
\begin{align*}
W^{i}_t=&V^i_0 + \intot\int_0^1\int_0^\infty\int_0^{2\pi} 
c(W^{i}_\sm,W^*_s(\alpha),z,\varphi) M_i(ds,d\alpha,dz,d\varphi).
\end{align*}
In particular, for each $t\geq 0$, $(W^i_t)_{i=1,\dots,N}$ are i.i.d. with common law $f_t$. Consequently,
since $f_t$ has a density for all $t>0$, $(W^i_t)_{i=1,\dots,N}\in(\rd)^N_\bullet$ a.s.

(ii) Next, we consider the system of SDE's, for $i=1,\dots,N$,
\begin{align*}
V^{i,N,K}_t=V^i_0 + \intot\int_0^1\int_0^\infty\int_0^{2\pi} 
c_K(V^{i,N,K}_\sm,V^*_s(\bV^{N,K}_\sm,\bW_\sm,\alpha),z,\varphi+\varphi_{i,\alpha,s}) M_i(ds,d\alpha,dz,d\varphi),
\end{align*}
where we used the notation $\bV^{N,K}_\sm=(V^{1,N,K}_\sm,\dots,V^{N,N,K}_\sm)\in(\rd)^N$,
$\bW_\sm=(W^1_\sm,\dots,W^N_\sm)\in(\rd)^N_\bullet$ and where we have set
$\varphi_{i,\alpha,s}:=\varphi_0(W^{i}_\sm-W^*_s(s,\alpha),V^{i,N,K}_\sm-V^*_s(\bV^{N,K}_\sm,\bW_\sm,\alpha))$
for simplicity. This system of SDEs has a unique solution, and this solution
is a Markov process with generator $\cL_{N,K}$ and initial condition $(V_0^i)_{i=1,\dots N}$.

(iii) The family $((W^1_t,V^{1,N,K}_t)_{t\geq 0},...,(W^N_t,V^{N,N,K}_t)_{t\geq 0})$ is exchangeable.
\end{lem}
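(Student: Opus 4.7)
For part (i), I would apply Proposition~\ref{wt} to each $W^i$ separately: the $i$-th SDE is exactly the Tanaka equation of Proposition~\ref{wt} driven by the Poisson measure $M_i$ and the common auxiliary family $(W^*_s(\alpha))_{s,\alpha}$, so strong existence, uniqueness and the fact that $W^i_t$ has law $f_t$ are immediate. Independence across $i$ follows because $W^i$ is a measurable functional of $(V_0^i, M_i)$ together with the deterministic-looking input $W^*$, and the pairs $(V_0^i, M_i)_{i=1,\dots,N}$ are jointly independent. The final assertion of (i) then reduces to the observation that the product law $f_t^{\otimes N}$ of $\bW_t$ assigns zero mass to each diagonal $\{w_i=w_j\}$, since $f_t$ has a density for each $t>0$ by Theorem~\ref{wp}.

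For part (ii), since $c_K$ vanishes for $z>K$, the SDE for $\bV^{N,K}$ only depends on the restriction of each $M_i$ to $\{z\leq K\}\times[0,1]\times[0,2\pi)$, which has finite intensity $2\pi K$ per unit time. Listing the jumps of $(M_i)_{i=1,\dots,N}$ in increasing order, I would construct the solution recursively: nothing moves between jumps and, at a jump of $M_i$, only the $i$-th coordinate is updated via the integrand; pathwise uniqueness is then automatic. For the generator, I apply the Poisson integration-by-parts formula to $\phi(\bV^{N,K}_t)$ and compute the compensator. Two ingredients reduce it to a function of $\bV^{N,K}$ alone: (a) the $2\pi$-periodicity of the integrand in $\varphi$, combined with translation invariance of $d\varphi$ on $[0,2\pi)$, absorbs the random phase $\varphi_{i,\alpha,s}$; (b) by property (c) of Lemma~\ref{super}, the $\alpha$-law of $V^*_s(\bV^{N,K}_\sm,\bW_\sm,\cdot)$ is $\mu^N_{\bV^{N,K}_\sm}$ \emph{independently of $\bW_\sm$}, so the $\int_0^1 d\alpha$ collapses to $(1/N)\sum_{j=1}^N$, recovering precisely \eqref{lKgood}. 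This is Dynkin's formula for $\cL_{N,K}$, and the well-posedness of the associated martingale problem (a consequence of Proposition~\ref{wpst}) identifies $\bV^{N,K}$ with the $\cL_{N,K}$-Markov process started at $(V_0^i)_i$.

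For part (iii), fix a permutation $\sigma$ of $\{1,\dots,N\}$ and feed the system the permuted drivers $(V_0^{\sigma(i)}, M_{\sigma(i)})_i$, which have the same joint law as $(V_0^i, M_i)_i$. Denote the resulting solution by $(\tilde W^i, \tilde V^{i,N,K})_i$; it suffices to prove pathwise that $\tilde W^i = W^{\sigma(i)}$ and $\tilde V^{i,N,K} = V^{\sigma(i),N,K}$. The first identity is immediate from uniqueness in (i) since the $W$-equation depends only on the individual driver and the common $W^*_s$. For the second, the key observation is that $V^*_t(\bv,\bw,\alpha)$ from Lemma~\ref{super} is invariant under simultaneous permutation of $\bv$ and $\bw$: indeed $\mu^N_{\sigma\bw}=\mu^N_\bw$ forces $F_{t,\sigma\bw}=F_{t,\bw}$ by uniqueness of Brenier's map, whence $Z^*_t(\sigma\bw,\alpha)=Z^*_t(\bw,\alpha)$, while the bijection $G_{\bw,\bv}$ trivially satisfies $G_{\sigma\bw,\sigma\bv}=G_{\bw,\bv}$. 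Plugging the ansatz $\tilde \bV^{N,K}=\sigma\bV^{N,K}$, $\tilde \bW=\sigma\bW$ into the SDE for $\tilde V^{i,N,K}$, this symmetry shows that $\tilde V^{i,N,K}$ satisfies exactly the equation that uniquely characterizes $V^{\sigma(i),N,K}$, and pathwise uniqueness from (ii) closes the argument.

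The main obstacle, as one might expect, is the generator identification in (ii): the particle system is \emph{not} autonomous, since its SDE uses $\bW_\sm$ to draw the collision partner, so \emph{a priori} $\bV^{N,K}$ need not be Markov on its own. What forces it to be Markov is precisely the coupling design of Lemma~\ref{super} --- combining $\varphi$-shift invariance with the empirical-measure property of $V^*_s$ --- and carrying this through the Dynkin calculation rigorously is the delicate step. The symmetry argument for (iii) then rests on the same coupling being equivariant under permutations, which propagates cleanly once uniqueness of Brenier's map is invoked.
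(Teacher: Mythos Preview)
Your proof is correct and follows essentially the same approach as the paper's: part (i) via Proposition~\ref{wt}, part (ii) via the finite-jump recursive construction together with the compensator computation using Lemma~\ref{super}-(c) and $2\pi$-periodicity (the paper also notes $c_K(v_i,v_i,z,\varphi)=0$ to drop the diagonal terms), and part (iii) via exchangeability of the drivers plus uniqueness. Your treatment of (iii) is more explicit than the paper's one-line appeal to ``exchangeability of $(V^i_0,M_i)_i$ and uniqueness (in law)''---your verification that $V^*_t(\sigma\bv,\sigma\bw,\alpha)=V^*_t(\bv,\bw,\alpha)$ through uniqueness of the Brenier map and the identity $G_{\sigma\bw,\sigma\bv}=G_{\bw,\bv}$ is a useful unpacking of what the paper leaves implicit, but it is the same argument rather than a different route.
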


\begin{proof}
Point (i) is a direct consequence of Proposition \ref{wt} and point (iii) follows from the exchangeability
of the family $(V^i_0,M_i)_{i=1,\dots,N}$ and from uniqueness (in law).
In point (ii), the existence and uniqueness result is also immediate, since the Poisson
measures under consideration are finite (or rather, are finite when $z$ is restricted to $[0,K]$,
which is the case since $c_K=c\indiq_{\{z\leq K\}}$). Finally
$(V^{1,N,K}_t,\dots,V^{N,N,K}_t)_{t\geq 0}$ is a Markov process with generator $\cL_{N,K}$ due to the
fact that for all $\bv\in(\rd)^N$, all $\bw \in(\rd)^N_\bullet$, all $s> 0$, all 
$\varphi_{ij} \in [0,2\pi)$, for all bounded measurable function $\phi:(\rd)^N\mapsto \rr$,
\begin{align*}
&\sum_{i=1}^N
\int_0^1 \int_0^\infty \int_0^{2\pi} \Big(\phi(\bv + c_K(v_i,V_s^*(\bv,\bw,\alpha),z,\varphi+\varphi_{ij}).\be_i)
-\phi(\bv) \Big)  d\varphi dz d\alpha\\
=&\sum_{i=1}^N \frac 1N \sum_{j=1}^N 
\int_0^\infty \int_0^{2\pi} \Big(\phi(\bv + c_K(v_i,v_j,z,\varphi+\varphi_{ij}).\be_i)
-\phi(\bv) \Big)  d\varphi dz\\
=&\frac 1N \sum_{i\ne j}
\int_0^\infty \int_0^{2\pi} \Big(\phi(\bv + c_K(v_i,v_j,z,\varphi).\be_i)
-\phi(\bv) \Big)  d\varphi dz,
\end{align*}
which is nothing but $\cL_{N,K}\phi(\bv)$, see \eqref{lKgood}. We used Lemma \ref{super}-(c) for the
first equality and the $2\pi$-periodicity of $c_K$ (in $\varphi$) and the fact that 
$c_K(v_i,v_i,z,\varphi)=0$ for the second one. 
\end{proof}

\subsection{Estimate of the Wasserstein distance}

We can now prove our main result in the case with cutoff. We first study hard potentials.

\begin{preuve} {\it of Theorem \ref{mr}-(ii) when $K\in [1,\infty)$.} 
We thus assume \eqref{cs}, \eqref{c1} with $\gamma\in(0,1)$ and \eqref{c2}. We consider $f_0\in \cP_2(\rd)$
satisfying \eqref{c5} for some $p\in(\gamma,2)$ and fix $q\in(\gamma,p)$ for the rest of the proof.
We also assume that $f_0$ is not a Dirac mass, so that $f_t$ has a density for all $t>0$.
We fix $N\geq 1$ and $K\in [1,\infty)$ and consider 
the processes introduced in Lemma \ref{coupling}.

\vip

{\it Step 1.} A direct application of the It\^o calculus for jump processes
shows that
\begin{align*}
&\E[|W^{1}_t -V^{1,N,K}_t |^2]\\
=& \intot \int_0^1 \int_0^\infty \int_0^{2\pi} \E\Big[ |W^{1}_s -V^{1,N,K}_s + \Delta^1(s,\alpha,z,\varphi) |^2 - 
 |W^{1}_s -V^{1,N,K}_s|^2 \Big] d\varphi dz d\alpha ds,
\end{align*}
where
\begin{align*}
\Delta^1(s,\alpha,z,\varphi)= c(W^{1}_s,W^*_s(s,\alpha),z,\varphi)-
c_K(V^{1,N,K}_s,V^*_s(\bV^{N,K}_s,\bW_s,\alpha),z,\varphi+\varphi_{i,\alpha,s}).
\end{align*}
Using Lemma \ref{fundest}, we thus obtain
\begin{align*}
\E[|W^{1}_t -V^{1,N,K}_t |^2] \leq \intot [B_1^K(s)+B_2^K(s)+B_3^K(s) ]ds,
\end{align*}
where, for $i=1,2,3$,
\begin{align*}
B_i^K(s):= \int_0^1 \E\Big[A_i^K(W^{1}_s, W^*_s(\alpha), V^{1,N,K}_s, V^*_s(\bV^{N,K}_s,\bW_s,\alpha)) \Big] 
d\alpha.
\end{align*}

\vip

{\it Step 2.} Using Lemma \ref{further}-(i),
we see that for all $M\geq 1$ (recall that $q\in(\gamma,p)$
is fixed).
\begin{align*}
B_1^K(s) \leq& M \int_0^1 \E\left[ |W^{1}_s- V^{1,N,K}_s |^2+ |W^*_s(\alpha)-
V^*_s(\bV^{N,K}_s,\bW_s,\alpha)) |^2 \right] d\alpha \\
&+ C e^{-M^{q/\gamma}} \int_0^1 \E \left[ \exp ( C (|W^{1}_s|^q+|W_s^*(\alpha)|^q)) \right] d\alpha\\
\leq &  M \int_0^1 \E\left[ |W^{1}_s- V^{1,N,K}_s |^2+ |W^*_s(\alpha)-
V^*_s(\bV^{N,K}_s,\bW_s,\alpha)) |^2 \right] d\alpha + C e^{-M^{q/\gamma}}.
\end{align*}
To get the last inequality, we used that $W^1_s$ and $W^*_s(.)$ are independent and satisfy 
$W^1_s \sim f_s$ and 
$W^*_s(.)\sim f_s$, whence
\bean
\int_0^1 \E \left[ \exp ( C (|W^{1}_s|^q +|W_s^*(\alpha)|^q) \right] d\alpha 
&=& \Bigl( \int_{\rd} e^{ C|w|^q} f_s(dw) \Bigr)^2<\infty
\eean
by \eqref{momex}.

\vip

{\it Step 3.} Roughly speaking, $B^K_2$ should not be far to be zero for symmetry reasons. We claim
  that $B^K_2$ would be zero if $W^*_s(\alpha)$ was replaced by $Z^*_s(\bW_s,\alpha)$. 
More precisely, we check here that
$$
\tB^K_2(s):=\int_0^1\E\left[A_2^K(W^{1}_s, Z^*_s(\bW_s,\alpha), 
V^{1,N,K}_s, V^*_s(\bV^{N,K}_s,\bW_s,\alpha)) \right] d\alpha=0.
$$
By Lemma \ref{super}-(d), we simply have
\begin{align*}
\tB^K_2(s)=&\E\left[ \frac 1 N \sum_{i=1}^N A_2^K(W^{1}_s, W^i_s,V^{1,N,K}_s,V^{i,N,K}_s) \right]
= \frac{N-1}N\E\left[A_2^K(W^{1}_s, W^2_s,V^{1,N,K}_s,V^{2,N,K}_s) \right]
\end{align*}
by exchangeability and since $A_2^K(v,v,\tv,\tv)=0$.
Finally, we write, using again exchangeability,
\begin{align*}
\tB^K_2(s)=& \frac{N-1}{2N}\E\left[A_2^K(W^{1}_s, W^2_s,V^{1,N,K}_s,V^{2,N,K}_s)
+ A_2^K(W^{2}_s, W^1_s,V^{2,N,K}_s,V^{1,N,K}_s) \right].
\end{align*}
This is zero by symmetry of $A_2^K$: it holds that $A_2^K(v,v_*,\tv,\tv_*)+A_2^K(\tv,\tv_*,v,v_*)=0$.

\vip

{\it Step 4.} By Step 3, we thus have 
\begin{align*}
B_2^K(s)= &\int_0^1 \E\Big[A_2^K(W^{1}_s, W^*_s(\alpha), V^{1,N,K}_s, V^*_s(\bV^{N,K}_s,\bW_s,\alpha)) \\
& \hskip2cm-A_2^K(W^{1}_s, Z^*_s(\bW_s,\alpha), 
V^{1,N,K}_s, V^*_s(\bV^{N,K}_s,\bW_s,\alpha)) \Big] d\alpha.
\end{align*}
Consequently, Lemma \ref{further}-(ii) implies
\begin{align*}
B_2^K(s)\leq &C \int_0^1 \E\Big[ |W^{1}_s- V^{1,N,K}_s|^2+ |W^*_s(\alpha)-V^*_s(\bV^{N,K}_s,\bW_s,\alpha)|^2\\
& + |W^*_s(\alpha)- Z^*_s(\bW_s,\alpha)|^2
(1+|W^{1}_s|+|W^*_s(\alpha)|+|Z^*_s(\bW_s,\alpha)|  )^{2\gamma/(1-\gamma)} \Big] d\alpha.
\end{align*}

{\it Step 5.} Finally, we use Lemma \ref{further}-(iii) to obtain
\begin{align*}
B_3^K(s) \leq &C K^{1-2/\nu}\int_0^1 \E\Big[1+ |W^{1}_s|^{4\gamma/\nu+2}+ |W^*_s(\alpha)|^{4\gamma/\nu+2}
+ |V^{1,N,K}_s|^2 + |V^*_s(\bV^{N,K}_s,\bW_s,\alpha))|^2 \Big] d\alpha.
\end{align*}
Since $W^1_s\sim f_s$, we deduce from \eqref{momex} that $\E[|W^{1}_s|^{4\gamma/\nu+2}]= 
\intrd |v|^{4\gamma/\nu+2} f_s(dv) \leq C$.
By Lemma \ref{super}-(a), we also have $W^*_s(.)\sim f_s$, whence 
$\int_0^1 |W^*_s(\alpha)|^{4\gamma/\nu+2} d\alpha = \intrd |v|^{4\gamma/\nu+2} f_s(dv) \leq C$.
Proposition \ref{wpst} shows that $\E[|V^{1,N,K}_s|^2] = \intrd |v|^2f_0(dv)$.
We next infer from Lemma \ref{super}-(c) that $\int_0^1 |V^*_s(\bV^{N,K}_s,\bW_s,\alpha))|^2 d\alpha=
N^{-1}\sum_1^N |V^{i,N,K}_s|^2$. Consequently, 
$\E[\int_0^1 |V^*_s(\bV^{N,K}_s,\bW_s,\alpha))|^2 d\alpha ]= \E[|V^{1,N,K}_s|^2] = \intrd |v|^2f_0(dv)$.
As a conclusion,
\begin{align*}
B_3^K(s) \leq &C K^{1-2/\nu}.
\end{align*}

{\it Step 6.} We set $u_t^{N,K}:=\E[ |W^{1}_t- V^{1,N,K}_t|^2]$. Using the previous steps, we see that
for all $M\geq 1$,
\begin{align*}
u^{N,K}_t \leq& Ct e^{-M^{q/\gamma}} + Ct K^{1-2/\nu} +  (M+C)\intot \big[ u^{N,K}_s 
+ \int_0^1 \E [|W^*_s(\alpha)-V^*_s(\bV^{N,K}_s,\bW_s,\alpha)|^2  ] d\alpha \big]ds \\
&+C \intot   \int_0^1 \E\Big[|W^*_s(\alpha)- Z^*_s(\bW_s,\alpha)|^2
(1+|W^{1}_s|+|W^*_s(\alpha)|+|Z^*_s(\bW_s,\alpha)|  )^{2\gamma/(1-\gamma)} \Big] d\alpha   ds.
\end{align*}
We now write, using Minkowski's inequality and Lemma \ref{super}-(d) and (e),
\begin{align}\label{tbru}
&\left[\int_0^1 \E\Big[|W^*_s(\alpha)-V^*_s(\bV^{N,K}_s,\bW_s,\alpha)|^2\Big] d\alpha\right]^{1/2}\\
\leq & \left[\int_0^1 \E\Big[|W^*_s(\alpha)-Z^*_s(\bW_s,\alpha)|^2\Big] d\alpha \right]^{1/2}
+\left[ \int_0^1 \E\Big[|Z^*_s(\bW_s,\alpha)-V^*_s(\bV^{N,K}_s,\bW_s,\alpha)|^2\Big] d\alpha \right]^{1/2} 
\nonumber\\
=& \E [\cW_2^2(f_s,\mu^N_{\bW_s})]^{1/2} + \left[\frac1N \sum_1^N \E[|W^i_s- V^{i,N,K}_s|^2 ] \right]^{1/2}  
\nonumber\\ 
=& \E[\cW_2^2(f_s,\mu^N_{\bW_s})]^{1/2} + (u^{N,K}_s)^{1/2} \nonumber
\end{align}
by exchangeability. We deduce that
\begin{align}\label{topcool3}
\int_0^1 \E [|W^*_s(\alpha)-V^*_s(\bV^{N,K}_s,\bW_s,\alpha)|^2  ] d\alpha 
\leq 2 \E [\cW_2^2(f_s,\mu^N_{\bW_s})]  + 2u^{N,K}_s.
\end{align}
Next, a simple computation
shows that for all $\e \in(0,1)$,
\begin{align}\label{topcool2}
&\int_0^1 \E\Big[|W^*_s(\alpha)- Z^*_s(\bW_s,\alpha)|^2 (1+|W^{1}_s|+|W^*_s(\alpha)|
+|Z^*_s(\bW_s,\alpha)|  )^{\frac{2\gamma}{1-\gamma}} \Big] d\alpha   \\
\leq& \int_0^1 \E\Big[|W^*_s(\alpha)- Z^*_s(\bW_s,\alpha)|^{2-\e} (1+|W^{1}_s|+|W^*_s(\alpha)|
+|Z^*_s(\bW_s,\alpha)|  )^{\frac{2\gamma}{1-\gamma}+\e} \Big] d\alpha  \nonumber \\
\leq& \left(\int_0^1 \E\Big[|W^*_s(\alpha)- Z^*_s(\bW_s,\alpha)|^{2}\Big]d\alpha\right)^{\frac{2-\e}2}\nonumber \\
&\hskip2cm \times
\left(\int_0^1 \E\Big[ (1+|W^{1}_s|+|W^*_s(\alpha)|
+|Z^*_s(\bW_s,\alpha)|  )^{\frac{4\gamma}{\e(1-\gamma)}+2} \Big] d\alpha \right)^{\frac\e2}\nonumber \\
\leq& C_\e \left( \E [\cW_2^2(f_s,\mu^N_{\bW_s})] \right)^{\frac{2-\e}2}.\nonumber
\end{align}
For the last inequality, we used Lemma \ref{super}-(e), the fact that by \eqref{momex},
$$
\E\left[|W^{1}_s|^{\frac{4\gamma}{\e(1-\gamma)}+2}  \right] 
= \int_0^1|W^*_s(\alpha)|^{\frac{4\gamma}{\e(1-\gamma)}+2} d\alpha = \intrd |v|^{\frac{4\gamma}{\e(1-\gamma)}+2}f_s(dv) 
\leq C_\e
$$
and that, by Lemma \ref{super}-(b)
\begin{align*}
\int_0^1 \E\Big[ |Z^*_s(\bW_s,\alpha)|^{\frac{4\gamma}{\e(1-\gamma)}+2} \Big] d\alpha=&
\E\Big[\frac 1N \sum_1^N 
|W_s^i|^{\frac{4\gamma}{\e(1-\gamma)}+2} \Big] = \E\left[|W^{1}_s|^{\frac{4\gamma}{\e(1-\gamma)}+2}  \right]  \leq C_\e.
\end{align*}
We end up with: for all $\e\in(0,1)$, all $M\geq 1$,
\begin{align*}
u^{N,K}_t \leq& Ct e^{-M^{q/\gamma}} + Ct K^{1-2/\nu} +  3(M+C)\intot \big[ u^{N,K}_s +\E [\cW_2^2(f_s,\mu^N_{\bW_s})] 
\big] ds \\
&+C_\e \intot ( \E [\cW_2^2(f_s,\mu^N_{\bW_s})] )^{1-\e/2} ds.
\end{align*}
Now we observe that $\E [\cW_2^2(f_s,\mu^N_{\bW_s})]=\e_N(f_t)$, recall \eqref{bestrate}, because
$W^1_t,\dots,W^N_t$ are i.i.d. and $f_t$-distributed.
Since $\e_N(f_t)\leq 2 \intrd |v|^2f_t(dv)=2\intrd |v|^2f_0(dv)$, since 
$M\geq 1$ and $K\in [1,\infty)$, we get
\begin{align*}
u^{N,K}_t \leq& C_\e \left( t e^{-M^{q/\gamma}} + M t\delta_{N,K,t}^{1-\e/2}  +  M \intot u^{N,K}_s ds\right).
\end{align*}
where we have set
\begin{align*}
\delta_{N,K,t} := K^{1-2/\nu} + \sup_{[0,t]} \e_N(f_s).
\end{align*}
Hence by Gr\"onwall's lemma,
\begin{align*}
\sup_{[0,T]} u^{N,K}_t \leq& C_\e T\left( e^{-M^{q/\gamma}} + M \delta_{N,K,T}^{1-\e/2}\right) e^{C_\e M T},
\end{align*}
this holding for any value of $M\geq 1$. We easily conclude that
\begin{align*}
\sup_{[0,T]} u^{N,K}_t \leq& C_{\e,T} \delta_{N,K,T}^{1-\e},
\end{align*}
by choosing $M=1$ if $\delta_{N,K,T} \geq 1/e$
and $M=|\log \delta_{N,K,T}|^{\gamma/q}$ otherwise, which gives
\begin{align*}
\sup_{[0,T]} u^{N,K}_t \leq& C_\e \left( T \delta_{N,K,T} + \delta_{N,K,T}^{1-\e/2} |\log \delta_{N,K,T}|^{\gamma/q}    
\right) e^{C_\e |\log \delta_{N,K,T}|^{\gamma/q} T}\leq  C_{\e,T} \delta_{N,K,T}^{1-\e},
\end{align*}
the last inequality following from the fact that $\gamma/q<1$. 

\vip

{\it Final step.} We now recall that $\mu^{N,K}_t=\mu^N_{\bV^{N,K}_t}$ and write
$$
\E[\cW_2^2(\mu^{N,K}_t,f_t)]\leq 2 \E[\cW_2^2(\mu^N_{\bV^{N,K}_t},\mu^N_{\bW_t})]
+ 2 \E[\cW_2^2(\mu^N_{\bW_t},f_t)].
$$
But $\E[\cW_2^2(\mu^N_{\bV^{N,K}_t},\mu^N_{\bW_t})]\leq \E[N^{-1}\sum_1^N |V^{i,N,K}_t-W_t^i|^2] =
\E[|V^{1,N,K}_t-W_t^1|^2]=u^{N,K}_t$ by exchangeability,
and we have already seen that $\E[\cW_2^2(\mu^N_{\bW_t},f_t)]=\e_N(f_t)$. Consequently,
for all $\e\in(0,1)$, all $t\in [0,T]$,
$$
\E[\cW_2^2(\mu^N_t,f_t)]\leq C_{\e,T}\delta_{N,K,T}^{1-\e} + 2\e_N(f_t)
\leq C_{\e,T} \left( K^{1-2/\nu}  + \sup_{[0,T]} \e_N(f_t) \right)^{1-\e}
$$
and this proves \eqref{fc3}. Using finally \eqref{momex} and applying Theorem \ref{theo:W2indep}
(with any choice of $k>4$), \eqref{fc4} easily follows.
\end{preuve}

We next study the case of Maxwell molecules.

\begin{preuve} {\it of Theorem \ref{mr}-(i) when $K\in [1,\infty)$.}
We thus assume \eqref{cs}, \eqref{c1} with $\gamma=0$ and \eqref{c2}. We consider $f_0\in \cP_2(\rd)$
not being a Dirac mass. We also assume that $f_0\in \cP_4(\rd)$ or that $\intrd f_0(v)\log f_0(v)dv<\infty$,
so that $f_t$ has a density for all $t>0$.
We fix $N\geq 1$ and $K\in [1,\infty)$ and consider the processes introduced in Lemma \ref{coupling}.

\vip

{\it Step 1.} Exactly as in the case of hard potentials, we find that
\begin{align*}
\E[|W^{1}_t -V^{1,N,K}_t |^2] \leq \intot [B_1^K(s)+B_2^K(s)+B_3^K(s) ]ds,
\end{align*}
where  $B_i^K(s):= \int_0^1 \E\Big[A_i^K(W^{1}_s, W^*_s(\alpha), V^{1,N,K}_s, V^*_s(\bV^{N,K}_s,\bW_s,\alpha)) \Big] 
d\alpha$ for $i=1,2,3$. 

\vip

{\it Step 2.} By Lemma \ref{furthermax}-(i), we have $B^K_1(s)=0$.

\vip

{\it Steps 3 and 4.} By Lemma \ref{furthermax}-(ii), it holds that for $\zeta_K=\pi\int_0^K(1-\cos G(z))dz$,
\begin{align*}
B^K_2(s)= \zeta_K \int_0^1 \E\Big[-|W^{1}_s- V^{1,N,K}_s|^2+ |W^*_s(\alpha)-V^*_s(\bV^{N,K}_s,\bW_s,\alpha)|^2\Big] 
d\alpha.
\end{align*}

\vip

{\it Step 5.} By Lemma \ref{furthermax}-(iii)
\begin{align*}
B^K_3(s) \leq C K^{1-2/\nu} \int_0^1 \E\Big[ |W^{1}_s|^2 + | W^*_s(\alpha)|^2+ |V^{1,N,K}_s |^2 \Big] 
d\alpha \leq C K^{1-2/\nu},
\end{align*}
since, as usual, 
$\E[|W^1_s|^2]=\int_0^1 |W^*_s(\alpha)|^2 d\alpha= \E[|V^{1,N,K}_s|^2]=\intrd |v|^2f_0(dv)$.

\vip

{\it Step 6.} Setting $u_t^{N,K}:=\E[ |W^{1}_t- V^{1,N,K}_t|^2]$, we thus have
\begin{align*}
u^{N,K}_t \leq& C K^{1-2/\nu} t + \zeta_K \intot \left(-u^{N,K}_s +
\int_0^1 \E\Big[|W^*_s(\alpha)-V^*_s(\bV^{N,K}_s,\bW_s,\alpha)|^2\Big] d\alpha
 \right) ds \\
\leq & C K^{1-2/\nu} t + \zeta_K \intot \left(2\sqrt{u^{N,K}_s}\sqrt{\E\left[\cW_2^2(f_s,\mu^N_{\bW_s})\right]} 
+\E\left[\cW_2^2(f_s,\mu^N_{\bW_s})\right]\right) ds
\end{align*}
by \eqref{tbru}. Next we recall that $\e_N(f_t)= \E[\cW_2^2(f_s,\mu^N_{\bW_s})]$, we set
$\e_{N,T}=\sup_{[0,T]}\e_N(f_t)$ and we 
recall that $\zeta_K\leq \int_0^\infty (1-\cos G(z))dz<\infty$. We thus may write,
for all $t \in [0,T]$,
\begin{align*}
u^{N,K}_t  \leq & C (K^{1-2/\nu} + \e_{N,T}T)T + C \e_{N,T}^{1/2} \intot (u^{N,K}_s)^{1/2}ds =: v^{N,K}_t.
\end{align*}
Then we have $(v_t^{N,K})' \leq  C \e_{N,T}^{1/2}(v_t^{N,K})^{1/2}$, so that
$(v_t^{N,K})^{1/2}\leq (C (K^{1-2/\nu} + \e_{N,T}T) T)^{1/2} + C \e_{N,T}^{1/2}t$.
We conclude that
$$
\sup_{[0,T]}u^{N,K}_t \leq C (K^{1-2/\nu} + \e_{N,T} T) T + C T^2 \e_{N,T}
\leq C K^{1-2/\nu} T + C (T+T^2) \e_{N,T}
.
$$

{\it Final step.} Exactly as in the case of hard potentials, for $t\in[0,T]$,
\begin{align*}
\E[\cW_2^2(\mu^{N,K}_t,f_t)]\leq 2 \e_N(f_t)+ 2 u^{N,K}_t \leq
C  K^{1-2/\nu} T+ C (1+T)^2 \sup_{[0,T]}\e_N(f_t)
\end{align*}
whence \eqref{fc1}.
If finally $f_0 \in \cP_k(\rd)$ for all $k >4$, 
then we know that $\sup_{[0,\infty)} \intrd |v|^kf_t(dv)<\infty$,
so that \eqref{fc2} follows by application of 
Theorem~\ref{theo:W2indep}. 
\end{preuve}

We conclude with hard spheres.

\begin{preuve} {\it of Theorem \ref{mr}-(iii).}
We thus assume \eqref{cs}, \eqref{c1} with $\gamma=1$ and \eqref{c2hs}. We consider $f_0\in \cP_2(\rd)$
satisfying \eqref{c5} for some $p\in(\gamma,2)$ and fix $q\in(\gamma,p)$ 
for the rest of the proof.
We also assume that $f_0$ has a density, so that $f_t$ has a density for all $t>0$.
We fix $N\geq 1$ and $K\in [1,\infty)$ and consider 
the processes introduced in Lemma \ref{coupling}.

{\it Step 1.} Exactly as in the case of hard potentials, we find that
\begin{align*}
u^{N,K}_t:=\E[|W^{1}_t -V^{1,N,K}_t |^2] \leq \intot [B_1^K(s)+B_2^K(s)+B_3^K(s) ]ds,
\end{align*}
where  
$B_i^K(s):= \int_0^1 \E\Big[A_i^K(W^{1}_s, W^*_s(\alpha), V^{1,N,K}_s, V^*_s(\bV^{N,K}_s,\bW_s,\alpha)) \Big] 
d\alpha$ for $i=1,2,3$. 

\vip

{\it Steps 2, 3, 4, 5, 6.} Following the case of hard potentials, using Lemma \ref{furtherhs} instead of Lemma
\ref{further}, we deduce that for all $M>1$,
\begin{align*}
\sum_1^3B_i^K(s)\leq& 
2M \int_0^1 \E\left[|W^{1}_s- V^{1,N,K}_s|^2+ |W^*_s(\alpha)-V^*_s(\bV^{N,K}_s,\bW_s,\alpha)|^2\right] d\alpha \\
&+ C (K e^{-M^q}+e^{-K^q}) 
\int_0^1 \E\Big[(1+|V^{1,N,K}_s|+ |V^*_s(\bV^{N,K}_s,\bW_s,\alpha)|) \\
&\hskip5cm \times e^{C(|W^{1}_s|^q+|W^*_s(\alpha)|^q
+|Z^*_s(\bW_s,\alpha)|^q)}\Big] d\alpha \\
&+ C \int_0^1 \E\Big[|W^*_s(\alpha)-Z^*_s(\bV^{N,K}_s,\bW_s,\alpha)|^2\\
&\hskip3cm \times(1+|W^1_s|+|W^*_s(\alpha)|+
|Z^*_s(\bV^{N,K}_s,\bW_s,\alpha)|)^2\Big] d\alpha 
\end{align*}
Proceeding as in \eqref{topcool2}, we deduce that the last line is bounded, for all $\e\in(0,1)$,
by 
$$
C_\e \left(\E\left[\cW_2^2(f_s,\mu^N_{\bW_s})   \right]\right)^{\frac {2-\e}\e}
$$
and using \eqref{topcool3}, the first term is bounded by
$$
4M \E\left[\cW_2^2(f_s,\mu^N_{\bW_s})\right] + 6 M u^{N,K}_s.
$$
Using finally the Cauchy-Schwarz inequality, that, thanks to Lemma \ref{super}-(c) and by exchangeability,
$\E[\int_0^1 
|V^*_s(\bV^{N,K}_s,\bW_s,\alpha)|^2d\alpha]=\E[N^{-1}\sum_1^N |V^{i,N,K}_s|^2]=
\E[ |V^{1,N,K}_s|^2]=\intrd |v|^2 f_0(dv)<\infty$
and \eqref{momex}, we easily bound the second line by $C (K e^{-M^q}+e^{-K^q})$ (recall that
$W^i_s \sim f_s$, that $W^*_s(.) \sim f_s$ and that, by Lemma \ref{coupling}-(b), $\int_0^1 
e^{C|Z^*_s(\bW_s,\alpha)|^q} d\alpha= N^{-1} \sum_1^N e^{C|W^i_s|^q}$).

\vip

Recalling that  $\E\left[\cW_2^2(f_s,\mu^N_{\bW_s})\right]=\e_N(f_s)$ and setting
$\e_{N,t}=\sup_{[0,t]} \e_N(f_s)$, we thus have, for any $M>1$, any $\e\in(0,1)$,
\begin{align*}
u^{N,K}_t \leq & 6M \intot u^{N,K}_s ds + C t (K e^{-M^q}+e^{-K^q}) + C_\e t \e_{N,t}^{1-\e/2}.
\end{align*}
Thus by Gr\"onwall's Lemma,
\begin{align*}
u^{N,K}_t \leq & C_\e t (K e^{-M^q}+e^{-K^q}+ \e_{N,t}^{1-\e/2}) e^{6Mt}.
\end{align*}
Choosing $M=2K$ and using that $K e^{-(2K)^q} \leq C e^{-K^q}$, we deduce that
\begin{align*}
\sup_{[0,T]} u^{N,K}_t \leq & C_\e T (e^{-K^q}+ \e_{N,T}^{1-\e/2}) e^{12K T}= 
C_\e T (e^{-K^q}+ (\sup_{[0,T]} \e_N(f_s))^{1-\e/2}) e^{12K T}.
\end{align*}
\vip

{\it Final step.} We conclude as usual, using that 
$\E[\cW_2^2(\mu^{N,K}_t,f_t)]\leq 2 \e_N(f_t)+ 2 u^{N,K}_t$ to obtain \eqref{fc5} and then \eqref{momex}
and Theorem \ref{theo:W2indep} to deduce \eqref{fc6}.
\end{preuve}

\section{Extension to the particle system without cutoff}
\setcounter{equation}{0}
\label{sec:ConvGal}

It remains to check that the particle system without cutoff is well-posed and
that we can pass to the limit as $K\to \infty$ in the convergence estimates 
\eqref{fc1}-\eqref{fc2}-\eqref{fc3}-\eqref{fc4}.
We will need the following rough computations.

\begin{lem}\label{furthernul}
Assume \eqref{cs}, \eqref{c1} and \eqref{c2hs} or \eqref{c2}. 
Adopt the notation of Lemma \ref{fundest}.
There are $C>0$, $\kappa>0$ and $\delta>0$ 
(depending on $\gamma,\nu$) such that for all $K\in [1,\infty)$,
all $v,v_*,\tv,\tv_*\in\rd$,
$$
\sum_{i=1}^3 A_i^K(v,v_*,\tv,\tv_*) \leq C(1+|v|+|v_*|+|\tv|+|\tv_*|)^\kappa(|v-\tv|^2+|v_*-\tv_*|^2+K^{-\delta}).
$$
\end{lem}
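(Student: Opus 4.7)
The plan is to revisit the rough estimates already produced inside the proofs of Lemmas \ref{further} and \ref{furtherhs}, but this time without attempting to control the leading order (which was delicate because we needed exponential moments). Here we only want a crude polynomial-in-velocities bound, with a gain in either $|v-\tv|^2+|v_*-\tv_*|^2$ or in $K^{-\delta}$, and nothing else. So the strategy is just to bound each $A_i^K$ separately, in both the \eqref{c2} and the \eqref{c2hs} framework, and then sum.

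For $A_1^K$, I would start from inequality \eqref{ar1} in the proof of Lemma \ref{further}:
\begin{equation*}
A_1^K(v,v_*,\tv,\tv_*) \leq 8c_4 \frac{|v-v_*|\wedge |\tv-\tv_*|}{(|v-v_*|\vee |\tv-\tv_*|)^{1-\gamma}}(|v-v_*|-|\tv-\tv_*|)^2,
\end{equation*}
bound the prefactor by $C(1+|v|+|v_*|+|\tv|+|\tv_*|)^\gamma$, and use the triangle inequality to replace $(|v-v_*|-|\tv-\tv_*|)^2$ by $2(|v-\tv|^2+|v_*-\tv_*|^2)$. Under \eqref{c2hs}, $\gamma=1$ and the same bound holds verbatim using $|v-v_*|\wedge|\tv-\tv_*|\leq C(1+|v|+|v_*|+|\tv|+|\tv_*|)$.

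For $A_2^K$, I would use the Lipschitz-type bound \eqref{fi} on $X\mapsto X\Phi_K(|X|)$, valid in both cases \eqref{c2} and \eqref{c2hs} (the verification via $\Gamma_K$ given in the proof of Lemma \ref{further}-(ii) only uses that $\int_0^\infty G^2(z)\,dz<\infty$ and that $(K/x)G^2(K/x)$ is bounded, both of which hold under either assumption). Applying \eqref{fi} with $X=v-v_*$, $Y=\tv-\tv_*$, and noting $|X-Y|\leq |v-\tv|+|v_*-\tv_*|$, together with $|(v-\tv)+(v_*-\tv_*)|\leq |v-\tv|+|v_*-\tv_*|$, yields $A_2^K\leq C(1+|v|+|v_*|+|\tv|+|\tv_*|)^\gamma(|v-\tv|^2+|v_*-\tv_*|^2)$.

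The only term producing the $K^{-\delta}$ gain is $A_3^K$. Under \eqref{c2}, the estimate \eqref{ar2} gives $A_3^K \leq C(|v-v_*|^2+|v-\tv||v-v_*|)|v-v_*|^{2\gamma/\nu}K^{1-2/\nu}$; since $\nu\in(0,1)$ we have $1-2/\nu<0$, so the exponent $\delta:=2/\nu-1>0$ works, and the velocity factor is absorbed into $(1+|v|+|v_*|+|\tv|)^\kappa$ for $\kappa$ large enough. Under \eqref{c2hs}, a direct computation (already performed in the proof of Lemma \ref{furtherhs}-(iii)) shows $\Psi_K(x)\leq Cx\indiq_{\{x\geq 2K/\pi\}}$, and writing $\indiq_{\{x\geq 2K/\pi\}}\leq (\pi x/(2K))$ gives $\Psi_K(x)\leq Cx^2/K$, whence $A_3^K\leq C(1+|v|+|v_*|+|\tv|)^4 K^{-1}$. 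In either case we obtain a bound of the form $C(1+|v|+|v_*|+|\tv|+|\tv_*|)^\kappa K^{-\delta}$. No step here is genuinely hard; the main thing is just bookkeeping to keep all polynomial exponents under a single $\kappa$ and both $K$-powers under a single $\delta$, and the conclusion follows by adding the three bounds.
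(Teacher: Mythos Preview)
Your proof is correct and follows essentially the same route as the paper: bound $A_1^K$ via \eqref{ar1}, bound $A_2^K$ via \eqref{fi}, and bound $A_3^K$ via \eqref{ar2} under \eqref{c2} or via the explicit form of $G$ under \eqref{c2hs}, with the same choice $\delta=2/\nu-1$ (resp.\ $\delta=1$). One small remark: under \eqref{c2hs} the statement allows any $\gamma\in[0,1]$, so the bound you quote from Lemma~\ref{furtherhs}-(iii) should read $\Psi_K(x)\le C x^\gamma\indiq_{\{x^\gamma\ge 2K/\pi\}}\le C x^{2\gamma}/K$ rather than $\Psi_K(x)\le Cx\indiq_{\{x\ge 2K/\pi\}}$; the paper makes exactly this adjustment (it cites the $\gamma=1$ computation but states the $x^\gamma$ version), and the modification is immediate.
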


\begin{proof}
Concerning $A_1^K$, we start from \eqref{ar1} (this is valid for all $\gamma\in[0,1]$) and we deduce that
\begin{align*}
A^K_1(v,v_*,\tv,\tv_*)\leq& 8c_4(|v-\tv|\land|v_*-\tv_*|)^\gamma (|v-\tv|+|v_*-\tv_*|)^2\\
\leq& C(1+|v|+|v_*|+|\tv|+|\tv_*|)^\gamma(|v-\tv|^2+|v_*-\tv_*|^2).
\end{align*}
We then make use of \eqref{fi} (also valid for all $\gamma \in [0,1]$) to write
\begin{align*}
A^K_2(v,v_*,\tv,\tv_*)\leq& C (|v-\tv|+|v_*-\tv_*|)^{2} (|v-v_*|^\gamma+|\tv-\tv_*|^{\gamma})\\
\leq& C(1+|v|+|v_*|+|\tv|+|\tv_*|)^\gamma(|v-\tv|^2+|v_*-\tv_*|^2).
\end{align*}
For $A^K_3$, we separate two cases. Under  hypothesis  \eqref{c2}, we immediately deduce from \eqref{ar2} that
\begin{align*}
A^K_3(v,v_*,\tv,\tv_*)
\leq& C(1+|v|+|v_*|+|\tv|+|\tv_*|)^{2+2\gamma/\nu} K^{1-2/\nu}.
\end{align*}
Under hypothesis \eqref{c2hs}, we have seen (when $\gamma=1$, at the end of the proof of Lemma 
\ref{furtherhs}) 
that $\Psi_K(x) \leq 5 x^\gamma \indiq_{\{x^\gamma\geq K/2\}}$, whence  $\Psi_K(x) \leq 10 x^{2\gamma} /K$ and thus
\begin{align*}
A^K_3(v,v_*,\tv,\tv_*)\leq& C (|v-v_*|\lor|\tv-\tv_*|)^{2+2\gamma} K^{-1}
\leq C(1+|v|+|v_*|+|\tv|+|\tv_*|)^{2+2\gamma} K^{-1}.
\end{align*}
The conclusion follows, choosing $\kappa=2+2\gamma/\nu$ and $\delta=2/\nu-1$ 
under  \eqref{c2} and $\kappa=2+2\gamma$ and $\delta=1$ under \eqref{c2hs}.
\end{proof}

Now we can give the

\begin{preuve} {\it of Proposition \ref{wps}-(ii).}
We only sketch the proof, since it is quite standard. In the whole proof, $N\geq 2$ is fixed,
as well as $f_0\in \cP_2(\rd)$ and a family of i.i.d. $f_0$-distributed random variables
$(V_0^{i,N})_{i=1,\dots,N}$.

\vip

{\it Step 1.} Recall \eqref{lgood}.
Classically, $(V_t^{i,N,\infty})_{i=1,\dots,N,t\geq 0}$ is a Markov process with generator $\cL_N$
starting from $(V_0^{i,N})_{i=1,\dots,N}$ if it solves
\begin{align}\label{pssdeinfty}
V^{i,N,\infty}_t=V^i_0 + \intot\int_j\int_0^\infty\int_0^{2\pi} c(V^{i,N,\infty}_\sm,V^{j,N,\infty}_\sm,z,\varphi)
O^N_i(ds,dj,dz,d\varphi),
\quad i=1,\dots,N
\end{align}
for some i.i.d. Poisson measures $O^N_i(ds,dj,dz,d\varphi))_{i=1,\dots,N}$ on
$[0,\infty)\times \{1,\dots,N\} \times[0,\infty)\times [0,2\pi)$ with intensity measures
$ds \left(N^{-1}\sum_{k=1}^N \delta_k(dj)\right) dz d\varphi$.

\vip

{\it Step 2.} The existence of a solution (in law) to \eqref{pssdeinfty} is easily checked, using
martingale problems methods (tightness and consistency),
by passing to the limit in \eqref{pssde}. The main estimates to be used are that,
uniformly in $K \in [1,\infty)$ (and in $N\geq 1$ but this is not the point here),
$$
\E[|V^{1,N,K}_t|^2]= \intrd |v|^2 f_0(dv) \quad \hbox{and} \quad
\E \left[\sup_{[0,T]} |V^{1,N,K}_t|\right] \leq C_T
$$
for all $T>0$. This second estimate is immediately deduced from the first one and the fact that
$\int_0^\infty \int_0^{2\pi} |c(v,v_*,z,\varphi)| \leq C |v-v_*|^{1+\gamma}\leq C(1+|v|+|v_*|)^2$. The tightness
is easily checked by using Aldous's criterion \cite{aldous}.

\vip

{\it Step 3.} Uniqueness (in law) for \eqref{pssdeinfty} is more difficult. Consider a
(c\`adl\`ag and adapted) solution
$(V_t^{i,N,\infty})_{i=1,\dots,N,t\geq 0}$ to \eqref{pssdeinfty}. For $K\in [1,\infty)$,
consider the solution to
\begin{align*}
V^{i,N,K}_t=V^i_0 + \intot\int_j\int_0^\infty\int_0^{2\pi} c_K(V^{i,N,K}_\sm,V^{j,N,K}_\sm,z,\varphi+\varphi_{s,i,j})
O^N_i(ds,dj,dz,d\varphi),
\quad i=1,\dots,N
\end{align*}
where $\varphi_{s,i,j}:=\varphi_0(V^{i,N,\infty}_\sm-V^{j,N,\infty}_\sm,V^{i,N,K}_\sm-V^{j,N,K}_\sm )$.
Such a solution obviously exists and is unique, because the involved Poisson measures are finite
(recall that $c_K(v,v_*,z,\varphi)=0$ for $z\geq K$). Furthermore, this solution
$(V_t^{i,N,K})_{i=1,\dots,N,t\geq 0}$ is a Markov process with generator $\cL_{N,K}$ starting from
$(V_0^{i,N})_{i=1,\dots,N}$ (because the only difference with \eqref{pssde} is the presence of
$\varphi_{s,i,j}$ which does not change the law of the particle system, see Lemma
\ref{coupling}-(ii) for a similar claim). Hence Proposition \ref{wps}-(i) implies that the law
of $(V_t^{i,N,K})_{i=1,\dots,N,t\geq 0}$ is uniquely determined.

We next introduce $\tau_{N,K,A}= \inf \{t\geq 0\; : \;
\exists \; i \in \{1,\dots,N\},\;  |V_t^{i,N,\infty}|+|V_t^{i,N,K}|\geq A\}$.
Using, on the one hand, the fact that $(V_t^{i,N,\infty})_{i=1,\dots,N,t\geq 0}$ is a.s. c\`adl\`ag 
(and thus locally bounded) and,
on the other hand, the (uniform in $K$) estimate established in Step 2, one easily gets convinced that
\begin{equation}\label{jab1}
\forall\; T >0, \quad \lim_{A\to \infty}\sup_{K\geq 1} \Pr[\tau_{N,K,A}\leq T ] = 0.
\end{equation}
Next, a simple computation shows that
\begin{align*}
\E[|V^{1,N,\infty}_{t\land \tau_{N,K,A}}-V^{1,N,K}_{t\land \tau_{N,K,A}}|^2] \leq &
\frac 1 N \sum_{j=1}^N\E\Big[ \int_0^{t\land \tau_{N,K,A}}  \int_0^\infty \int_0^{2\pi}
\Big(\big|V^{1,N,\infty}_\sm - V^{1,N,K}_\sm + \Delta^{1,j,N,K}_\sm(z,\varphi)\big|^2 - \\
&\hskip6cm \big|V^{1,N,\infty}_\sm - V^{1,N,K}_\sm \big|^2 \Big) d\varphi dz \Big]
\end{align*}
where
$$
\Delta^{1,j,N,K}_\sm(z,\varphi):= c(V^{1,N,\infty}_\sm,V^{1,N,\infty}_\sm,z,\varphi)
-c_K(V^{1,N,K}_\sm,V^{1,N,K}_\sm,z,\varphi+\varphi_{s,i,j}).
$$
Using Lemmas \ref{fundest} and \ref{furthernul}
and the fact that all the velocities are bounded by $A$
until $\tau_{N,K,A}$, we easily deduce that
\begin{align*}
&\E[|V^{1,N,\infty}_{t\land \tau_{N,K,A}}-V^{1,N,K}_{t\land \tau_{N,K,A}}|^2] \\
\leq&
\frac {C (1+A)^{\kappa}} N \sum_{j=1}^N\E\Big[ \int_0^{t\land \tau_{N,K,A}}
(|V^{1,N,\infty}_s - V^{1,N,K}_s|^2+|V^{j,N,\infty}_s - V^{j,N,K}_s|^2 + K^{-\delta}) ds\Big]\\
\leq & C_T (1+A)^{\kappa} K^{-\delta}  + C (1+A)^{\kappa} \intot
\E[|V^{1,N,\infty}_{s\land \tau_{N,K,A}}-V^{1,N,K}_{s\land \tau_{N,K,A}}|^2]ds
\end{align*}
by exchangeability. We now use the Gr\"onwall lemma and then deduce that for any $A>0$,
\begin{equation}\label{jab2}
\lim_{K\to \infty} \sup_{[0,T]}\E[|V^{1,N,\infty}_{t\land \tau_{N,K,A}}-V^{1,N,K}_{t\land \tau_{N,K,A}}|^2 ]=0.
\end{equation}
Gathering \eqref{jab1} and \eqref{jab2}, we easily conclude that
for all $t\geq 0$, $V^{1,N,K}_{t}$ tends in probability to  $V^{1,N,\infty}_{t}$ as $K\to \infty$.
Thus for any finite family $0\leq t_1 \leq \dots \leq t_l$, $(V^{i,N,K}_{t_j})_{i=1,\dots,N,j=1,\dots,l}$
goes in probability to  $(V^{i,N,\infty}_{t_j})_{i=1,\dots,N,j=1,\dots,l}$, of which the law is
thus uniquely determined. This is classically sufficient to characterize the whole law of
the process $(V^{i,N,\infty}_{t})_{i=1,\dots,N,t\geq 0}$.
\vip

{\it Conclusion.} We thus have the existence of a unique Markov process $(V_t^{i,N,\infty})_{i=1,\dots,N,t\geq 0}$
with generator $\cL_N$ starting from $(V_0^{i,N})_{i=1,\dots,N}$, and it holds that for each $t\geq 0$,
each $N\geq 2$, $(V_t^{i,N,\infty})_{i=1,\dots,N}$ is the limit in law, as $K\to \infty$, of
$(V_t^{i,N,K})_{i=1,\dots,N}$.
\end{preuve}

To conclude, we will need the following lemma.

\begin{lem}\label{fatouw2}
Let $N\geq 2$ be fixed.
Let $(X^{i,N,K})_{i=1,\dots,N}$ be a sequence of $(\rd)^N$-valued random variable going in law,
as $K\to \infty$, to some $(\rd)^N$-valued random variable $(X^{i,N})_{i=1,\dots,N}$.
Consider the associated empirical measures $\nu^{N,K}:=N^{-1}\sum_{i=1}^N \delta_{X^{i,N,K}}$ and
$\nu^{N}:=N^{-1}\sum_{i=1}^N \delta_{X^{i,N}}$. Then for any $g\in \cP_2(\rd)$,
$$
\E\left[\cW_2^2\left(\nu^N,g \right)\right] \leq
\liminf_{K \to \infty}\E\left[\cW_2^2\left(\nu^{N,K},g \right)\right].
$$
\end{lem}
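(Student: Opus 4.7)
The plan is to recognize that the statement is exactly the lower semicontinuity in law of the nonnegative functional
$$F : (\rd)^N \to \rr_+, \qquad F(\bv) := \cW_2^2(\mu^N_\bv, g),$$
where $\mu^N_\bv = N^{-1}\sum_{i=1}^N \delta_{v_i}$, and then to derive this via continuity of $F$ combined with Skorokhod's representation theorem and Fatou's lemma.

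The first step is to verify that $F$ is continuous on $(\rd)^N$. If $\bv^n \to \bv$ in $(\rd)^N$, then the probability measure $\eta_n := N^{-1}\sum_{i=1}^N \delta_{(v_i^n, v_i)}$ lies in $\cH(\mu^N_{\bv^n}, \mu^N_\bv)$, so
$$\cW_2^2(\mu^N_{\bv^n}, \mu^N_\bv) \leq \frac{1}{N}\sum_{i=1}^N |v_i^n - v_i|^2 \longrightarrow 0.$$
Hence $\mu^N_{\bv^n} \to \mu^N_\bv$ in $(\cP_2(\rd), \cW_2)$, and since $\tg \mapsto \cW_2(\tg, g)$ is $1$-Lipschitz for $\cW_2$ by the triangle inequality, we obtain $F(\bv^n) \to F(\bv)$.

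The second step is now standard. Since $(\rd)^N$ is a Polish space, Skorokhod's representation theorem provides an auxiliary probability space on which live random variables $\tilde Y_K$ and $\tilde Y$ with the same laws as $(X^{i,N,K})_{i=1,\dots,N}$ and $(X^{i,N})_{i=1,\dots,N}$ respectively, such that $\tilde Y_K \to \tilde Y$ almost surely as $K \to \infty$. By continuity of $F$, we have $F(\tilde Y_K) \to F(\tilde Y)$ almost surely, and since $F \geq 0$, Fatou's lemma gives
$$\E\!\left[F(\tilde Y)\right] \leq \liminf_{K \to \infty} \E\!\left[F(\tilde Y_K)\right].$$
As both sides depend only on the laws, this is exactly the asserted inequality. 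No step is technically difficult; the only delicate point is to notice that continuity of the empirical-measure map into $(\cP_2(\rd), \cW_2)$ is automatic for a fixed finite number of atoms, so no uniform bound on moments of the $X^{i,N,K}$ is required (which matters, since in the intended application such a uniform bound as $K \to \infty$ is not available beyond the second moment).
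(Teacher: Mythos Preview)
Your proof is correct and follows essentially the same route as the paper: both establish continuity of $(x_1,\dots,x_N)\mapsto \cW_2^2(N^{-1}\sum_i\delta_{x_i},g)$ via the triangle inequality and the coupling bound $\cW_2^2(\mu^N_{\bv},\mu^N_{\bw})\leq N^{-1}\sum_i|v_i-w_i|^2$. The only difference is in the last step, where you use Skorokhod's representation plus Fatou's lemma, while the paper uses the continuous mapping theorem, truncation by $A$, and monotone convergence; these are equivalent standard devices.
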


\begin{proof}
First observe that the map $(x_1,\dots,x_N)\mapsto \cW_2(N^{-1}\sum_1^N\delta_{x_i},g)$ is
continuous on $(\rd)^N$. Indeed, it suffices to use the triangular inequality for $\cW_2$ and the
easy estimate
$$
\cW_2^2\left(\frac1N\sum_1^N\delta_{x_i},\frac1N\sum_1^N\delta_{y_i}\right)
\leq \frac1N \sum_1^N |x_i-y_i|^2.
$$
Consequently, $\cW_2^2\left(\nu^{N,K},g \right)$ goes in law to $\cW_2^2\left(\nu^{N},g \right)$.
Thus for any $A>1$, we have
$$
\E\left[\cW_2^2\left(\nu^{N},g \right) \land A\right]
=\lim_{K\to\infty} \E\left[\cW_2^2\left(\nu^{N,K},g \right) \land A\right]\leq
\liminf_{K \to \infty}\E\left[\cW_2^2\left(\nu^{N,K},g \right)\right].
$$
It then suffices to let $A$ increase to infinity and to use the monotonic convergence theorem.
\end{proof}

This allows us to conclude the proof of our main results.

\begin{preuve} {\it of Theorem \ref{mr}-(i)-(ii) when $K=\infty$.}
Recall that \eqref{fc1}-\eqref{fc2}-\eqref{fc3}-\eqref{fc4} 
have already been established when $K\in [1,\infty)$.
Since $(V^{i,N,\infty}_t)_{i=1,\dots,N}$ is the limit (in law) of $(V^{i,N,K}_t)_{i=1,\dots,N}$ as $K\to\infty$
for each $t\geq 0$ and each $N\geq 2$ (see the conclusion of the proof of Proposition \ref{wps}-(ii)),
we can let $K\to\infty$ in \eqref{fc1}-\eqref{fc2}-\eqref{fc3}-\eqref{fc4} using Lemma \ref{fatouw2}.
\end{preuve}

\def\refname{References}

\end{document}